\newtheorem{theorem}{Theorem}[section]
\newtheorem{corollary}[theorem]{Corollary}
\newtheorem{proposition}[theorem]{Proposition}
\newtheorem{lemma}[theorem]{Lemma}
\newtheorem{lem}[theorem]{}
\theoremstyle{definition}
\newtheorem{definition}[theorem]{Definition}
\theoremstyle{remark}
\newtheorem{remark}[theorem]{Remark}
\newtheorem{example}[theorem]{Example}
\newtheorem{convention}[theorem]{Convention}
\newcommand{\blem}{\begin{lem} \rm}
\newcommand{\elem}{\end{lem}}
\newcommand\E{\mathcal{E}}
\newcommand\cA{\mathcal{A}}
\renewcommand{\L}{\mathcal{L}}
\newcommand{\F}{\mathcal{F}}
\newcommand{\N}{\mathbb{N}}
\newcommand{\R}{\mathbb{R}}
\newcommand{\cC}{\mathcal{C}}
\newcommand{\Z}{\mathbb{Z}}
\newcommand{\on}{\operatorname}
\newcommand{\ainfty}{{$A_\infty$\ }}
\newcommand{\Don}{\on{Don}}
\newcommand{\Horm}{\text{H\"orm}}
\newcommand{\Floer}{\on{Floer}}
\newcommand{\Brane}{\on{Brane}}
\newcommand{\FIO}{\on{FIO}}
\newcommand{\Cat}{\on{Cat}}
\newcommand{\dual}{\vee}
\newcommand{\Ab}{\on{Ab}}
\newcommand{\Ham}{\on{Ham}}
\newcommand{\Fun}{\on{Fun}}
\newcommand{\Obj}{\on{Obj}}
\newcommand{\graph}{\on{graph}}
\newcommand{\Symp}{\on{Symp}}
\newcommand{\Lag}{\on{Lag}}
\newcommand{\Hom}{ \on{Hom}}
\newcommand\dirac{/\kern-1.2ex\partial} 
\newcommand\qu{/\kern-.7ex/} 
\newcommand\lqu{\backslash \kern-.7ex \backslash} 
\newcommand\dr{r_+ \kern-.7ex - \kern-.7ex r_-}
\newcommand{\f}{\frac}
\newcommand{\lan}{\langle}
\newcommand{\ran}{\rangle}
\newcommand{\hh}{{\f{1}{2}}}
\newcommand\pt{\on{pt}}
\newcommand\cL{\mathcal{L}}
\newcommand\cF{\mathcal{F}}
\newcommand\ul{\underline}
\newcommand\bra[1]{ \lan {#1} \ran} 
\newcommand\bdefn{\begin{definition}}
\newcommand\edefn{\end{definition}}
\newcommand\bea{\begin{eqnarray*}}
\newcommand\eea{\end{eqnarray*}}
\newcommand\bcv{\left[ \begin{array}{r} }
\newcommand\ecv{\end{array} \right] }
\newcommand\bma{\left[ \begin{array} }
\newcommand\ema{\end{array} \right]}
\newcommand\ben{\begin{enumerate}}
\newcommand\een{\end{enumerate}}
\newcommand\beq{\begin{equation}}
\newcommand\eeq{\end{equation}}
\newcommand\bex{\begin{example}}
\newcommand\bsj{\left\{ \begin{array}{rrr} }
\newcommand\esj{\end{array} \right\}}
\newcommand\Id{\on{Id}}
\newcommand\cI{\mathcal{I}}
\newcommand\eex{\end{example}}
\newcommand\sx{*\kern-.5ex_X}
\def\mathunderaccent#1{\let\theaccent#1\mathpalette\putaccentunder}
\def\putaccentunder#1#2{\oalign{$#1#2$\crcr\hidewidth \vbox
to.2ex{\hbox{$#1\theaccent{}$}\vss}\hidewidth}}
\newcommand\Fuk{{\on{Fuk}}}
\begin{document}

\title{Functoriality for Lagrangian correspondences in Floer theory}

\author{Katrin Wehrheim and Chris T. Woodward}

\address{Department of Mathematics,
Massachusetts Institute of Technology,
Cambridge, MA 02139.
{\em E-mail address: katrin@math.mit.edu}}

\address{Department of Mathematics, 
Rutgers University,
Piscataway, NJ 08854.
{\em E-mail address: ctw@math.rutgers.edu}}

\begin{abstract}  
We associate to every monotone Lagrangian correspondence a functor between Donaldson-Fukaya categories.
The composition of such functors agrees with the functor associated to the geometric composition of the correspondences, if the latter is embedded.  That is ``categorification
commutes with composition'' for Lagrangian correspondences.
This construction fits into a symplectic $2$-category with a categorification $2$-functor, in which all correspondences are composable, and embedded geometric composition is isomorphic to the actual composition. As a consequence,
any functor from a bordism category to the symplectic category gives rise to a category valued topological field theory.

\vspace{-8mm}
\end{abstract} 

\maketitle

\tableofcontents\vspace{-10mm}

\section{Introduction}  

Correspondences arise naturally as generalizations of maps in a number
of different settings: A correspondence between two sets is a subset
of the Cartesian product of the sets -- just like the graph of a map.
In symplectic geometry, the natural class is that of Lagrangian
correspondences, that is, Lagrangian submanifolds in the product of
two symplectic manifolds (with the symplectic form on the first factor
reversed).  Lagrangian correspondences appear in H\"ormander's
generalizations of pseudodifferential operators \cite{ho:fio1}, and
were investigated from the categorical point of view by Weinstein
\cite{we:sc}.  In gauge theory Lagrangian correspondences arise as
moduli spaces of bundles associated to cobordisms \cite{wi:jo}.

One would hope that various constructions associated to symplectic
manifolds, which are compatible with symplectomorphisms, can also be made functorial for Lagrangian correspondences.  The
constructions considered by H\"ormander and Weinstein correspond to
various notions of quantization, by which a symplectic manifold
is replaced by a linear space; one then tries to attach to a
Lagrangian correspondence a linear map.  More recently, categorical invariants associated to a symplectic manifold have been introduced by Donaldson and Fukaya, see for example \cite{fooo} and \cite{se:bo}.  
To the symplectic manifold is associated a category
whose objects are certain Lagrangian submanifolds, and whose morphisms
are certain chain complexes or Floer cohomology groups.  The
composition in this category gives a way to understand various
product structures in Floer theory, and plays a role in the
homological mirror symmetry conjecture of Kontsevich \cite{kon:hom}.

In this paper we associate to every (compact monotone or geometrically bounded exact)
symplectic manifold $(M,\omega)$ a category $\Don^\#(M)$, which is a slight enlargement of the usual Donaldson-Fukaya category.
Its objects are certain sequences of (compact, oriented, relatively spin, monotone or exact) Lagrangian correspondences and its morphisms are quilted Floer cohomology classes, as introduced in \cite{quiltfloer}.
Given two symplectic manifolds $M_0$ and $M_1$ of the same monotonicity type and an admissible Lagrangian correspondence $L_{01} \subset M_0^- \times M_1$ we construct a functor 
$$ 
\Phi(L_{01}) : \Don^\#(M_0) \to \Don^\#(M_1) .
$$
On objects it is given by concatenation, e.g.\ $\Phi(L_{01})(L_0) = (L_0,L_{01})$ for a Lagrangian submanifold $L_0\subset M_0$. 
On morphisms the functor is given by a relative Floer theoretic invariant constructed from moduli spaces of pseudoholomorphic quilts introduced in \cite{quilts}.

Given a triple $M_0$, $M_1$, $M_2$
of symplectic manifolds and admissible Lagrangian correspondences 
$L_{01}\subset M_0^-\times M_1$ and $L_{12}\subset M_1^-\times M_2$, 
the algebraic composition $\Phi(L_{01})\circ\Phi(L_{12}): \Don^\#(M_0) \to \Don^\#(M_2)$ 
is always defined.  On the other hand, one may consider
the {\em geometric composition} introduced by Weinstein \cite{we:sc}
$$
L_{01} \circ L_{12} := \pi_{02}( L_{01} \times_{M_1} L_{12}) \subset M_0^- \times M_2 ,
$$ 
given by the image under the projection $ \pi_{02}: M_0^- \times M_1 \times M_1^- \times M_2 \to M_0^- \times M_2 $ of
\begin{equation} \label{int}
L_{12} \times_{M_1} L_{01} := (L_{01} \times L_{12})
 \cap (M_0^- \times \Delta_1 \times M_2) .
\end{equation}
If we assume transversality of the intersection then the restriction of $\pi_{02}$ to $L_{01} \times_{M_1} L_{12}$ is automatically an immersion, see \cite{gu:rev,quiltfloer}. 
Using the strip-shrinking analysis from \cite{isom} we prove that if $L_{01} \times_{M_1} L_{12}$ is a transverse intersection and embeds by $\pi_{02}$ into $M_0^- \times M_2$ then
\begin{equation} \label{maineq}
 \Phi(L_{01}) \circ \Phi(L_{12}) \cong \Phi(L_{01} \circ L_{12}) .
\end{equation}
This is the "categorification commutes with composition" result alluded to in the abstract.  
If $M_1$ is not spin, there is also a shift of relative spin structures on the right-hand side.  

There is a stronger version of this result, expressed in the language
of $2$-categories as follows. (See e.g.\ Section~\ref{2cat} for an introduction to this language.) We construct a {\em Weinstein-Floer}
$2$-category $\Floer^\#$ whose objects are symplectic manifolds,
$1$-morphisms are sequences of Lagrangian correspondences, and
$2$-morphisms are Floer cohomology classes. (Again, we impose monotonicity and certain further admissibility assumptions on all objects and $1$-morphisms.) 
The composition of $1$-morphisms in this category is concatenation, which we denote by $\#$.  The construction of the functor $\Phi(L_{01})$ above extends to
a categorification $2$-functor to the $2$-category of categories
\begin{equation}\label{2func}
\Floer^\# \longrightarrow \Cat .
\end{equation}
On objects and elementary $1$-morphisms (i.e.\ sequences consisting of a single correspondence) it is given by associating to every
symplectic manifold $M$ its Donaldson-Fukaya category $\Don^\#(M)$, and to
every Lagrangian correspondence $L_{01}$ the associated functor
$\Phi(L_{01})$. The further $1$-morphisms are concatenations of elementary
Lagrangian correspondences, mapped to the composition of functors.
The 2-morphisms are quilted Floer homology classes, to which we
associate natural transformations.  A refinement of \eqref{maineq}
says that the concatenation $L_{01} \# L_{12}$ is $2$-isomorphic to
the geometric composition $L_{01} \circ L_{12}$ as $1$-morphisms in
$\Floer^\#$.  The formula \eqref{maineq} then follows by combining
this result with the $2$-functor axiom for $1$-morphisms in
\eqref{2func}.

Alternatively, one could identify the $1$-morphisms $L_{01} \# L_{12}$ and $L_{01} \circ L_{12}$ if the latter is a transverse, embedded composition. This provides
an elementary construction of a symplectic category $\Symp^\#$  explained in Section \ref{symp cat}. It consists of symplectic manifolds and equivalence classes of sequences of Lagrangian correspondences, whose composition is always defined and coincides with geometric composition in transverse, embedded cases.

The categorical point of view fits in well with one of the
applications of our results, which is the construction of topological
field theories associated to various gauge theories.  A corollary of
our categorification functor \eqref{2func} is that any functor from a
bordism category to the (monotone subcategory of the)
symplectic category $\Symp^\#$ gives rise to a
category valued TFT.  For example, in \cite{field} we investigate the
topological quantum field theory with corners (roughly speaking; not
all the axioms are satisfied) in $2 +1 + 1$ dimensions arising from
moduli spaces of flat bundles with compact structure group on
punctured surfaces and three-dimensional cobordisms containing
tangles.  In particular, this gives rise to $SU(N)$ Floer theoretic
invariants for $3$-manifolds that should be thought of as Lagrangian
Floer versions of gauge-theoretic invariants investigated by Donaldson
and Floer, in the case without knots, and Kronheimer-Mrowka
\cite{km:kh} and Collin-Steer \cite{co:in}, in the case with knots.
The construction of such theories was suggested by Fukaya in
\cite{fu:fl1} and was one of the motivations for the development of
Fukaya categories.

Many of our results have chain-level versions, that is, extensions to
Fukaya categories.  These will be published in \cite{Ainfty}, which is
joint work with S. Mau.  To each monotone Lagrangian correspondence
with minimal Maslov number at least three we define an \ainfty functor
$\Psi(L_{01}) : \Fuk^\#(M_0) \to \Fuk^\#(M_1) $ between extended
versions of the Fukaya categories.
Moreover, we are working on extending this construction to an $A_\infty$ functor 
$$
\Fuk^\#(M_0,M_1) \longrightarrow \Fun(\Fuk^\#(M_0),\Fuk^\#(M_1)) ,
$$ where the Fukaya category on the left hand side should be a
chain-level version of the morphism space of $\Floer^\#$ between $M_0$
and $M_1$, i.e.\ its objects are Lagrangian correspondences and
sequences thereof, starting at $M_0$ and ending at $M_1$.  On homology
level, for the Donaldson-Fukaya categories, this functor is given as
part of the $2$-categorification functor \eqref{2func}.  On chain
level, it would finalize the proof of homological mirror symmetry for
the four-torus by Abouzaid and Smith \cite{ab:hms4torus}. It should be
seen as the symplectic analogue of the quasi-equivalence of
dg-categories \cite{toen:dg} in algebraic geometry $D^b_\infty(X\times
X) \simeq \Fun(D^b_\infty(X),D^b_\infty(X))$ for (somewhat enhanced)
derived categories of coherent sheaves on a projective variety $X$.
Abouzaid and Smith utilize the conjectural symplectic functor to prove
that a given subcategory $\cA$ (for which a fully faithful functor to
a derived category of coherent sheaves is known) generates the Fukaya
category $\Fuk^\#(T^4)$, by resolving the diagonal $\Delta\subset
(T^4)^-\times T^4$ in terms of products of Lagrangians in $\cA$.

\medskip

{\em We thank Paul Seidel and Ivan Smith for
encouragement and helpful discussions.}

\section{Symplectic category with Lagrangian correspondences}
\label{symp cat}

We begin by summarizing some results and elementary notions from
\cite{quiltfloer}.  
Restricted to linear Lagrangian correspondences between symplectic
vector spaces, the {\em geometric composition} of Lagrangian correspondences 
defined in \eqref{int} is a well defined composition and defines a linear
symplectic category \cite{gu:rev}.  In general, however, even when the
intersection \eqref{int} is transverse, the geometric composition only
yields an immersed Lagrangian. While it may be natural to allow
immersed Lagrangian correspondences (and attempt a definition of Floer
cohomology for these), a construction of a symplectic category based
on geometric composition would require the inclusion of perturbation
data.  A simple resolution of the composition problem is given by
passing to sequences of Lagrangian correspondences and defining a
purely algebraic composition.
Here and throughout we will write $M$ for a symplectic manifold $(M,\omega)$ consisting of a manifold with symplectic $2$-form; and we denote by $M^-:=(M,-\omega)$ the same manifold equipped with the symplectic form $-\omega$.

\begin{definition} \label{Lag cor}
Let $M,M'$ be symplectic manifolds.  A {\em generalized Lagrangian
correspondence} $\ul{L}$ from $M$ to $M'$ consists of
\begin{enumerate}
\item a sequence $N_0,\ldots,N_r$ of any length $r+1\geq 2$ of
symplectic manifolds with $N_0 = M$ and $N_r = M'$ ,
\item a sequence $L_{01},\ldots, L_{(r-1)r}$ of Lagrangian
correspondences with $L_{(j-1)j} \subset N_{j-1}^-\times N_{j}$ for
$j=1,\ldots,r$.
\end{enumerate}
\end{definition}
\begin{definition}
Let $\ul{L}$ from $M$ to $M'$ and $\ul{L}'$ from $M'$ to $M''$ 
be two generalized Lagrangian correspondences. Then we define composition
$$
(\ul{L},\ul{L}') :=
\bigl(L_{01},\ldots,L_{(r-1)r},L'_{01},\ldots,L'_{(r'-1)r'}\bigr) 
$$
as a generalized Lagrangian correspondence from $M$ to $M''$.
\end{definition}

We will however want to include geometric composition into our category -- if it is well defined.
For the purpose of obtaining well defined Floer cohomology we will restrict ourselves to the following class of compositions, for which the resulting Lagrangian correspondence is in fact a smooth submanifold.

\begin{definition} \label{embedded}
We say that the composition $ L_{01} \circ L_{12}$ is {\em embedded}
if the intersection $(L_{01} \times L_{12}) \pitchfork (M_0^- \times \Delta_1 \times M_2)$ is transverse and the projection $\pi_{02}:L_{12} \times_{M_1} L_{01}\to L_{01}\circ L_{12}\subset M_0^-\times M_2$ is injective (and hence automatically an embedding).
\end{definition}

Using these notions we can now define a {\em symplectic category} $\Symp^\#$ which includes all Lagrangian correspondences.  
An extension of this approach, using Floer cohomology spaces to
define a $2$-category, is given in Section \ref{2cat}.

\begin{definition} 
The symplectic category $\Symp^\#$ is defined as follows:
\begin{enumerate}
\item 
The objects of $\Symp^\#$ are smooth symplectic manifolds
$M=(M,\omega)$.
\item 
The morphisms $\Hom(M,M')$ of $\Symp^\#$ are generalized Lagrangian
correspondences from $M$ to $M'$ modulo the equivalence relation
$\sim$ generated by
$$
\bigl(\ldots,L_{(j-1)j},L_{j(j+1)},\ldots \bigr) 
\sim \bigl(\ldots,L_{(j-1)j}\circ L_{j(j+1)},\ldots\bigr)
$$
for all sequences and $j$ such that $L_{(j-1)j} \circ L_{j(j+1)}$ is embedded.
\item 
The composition of morphisms $[\ul{L}]\in\Hom(M,M')$ and
$[\ul{L}']\in\Hom(M',M'')$ is defined by
$$
[\ul{L}]\circ[\ul{L}'] := [(\ul{L},\ul{L}')] \;\in\Hom(M,M'') .
$$
\end{enumerate}
\end{definition}

Note that a sequence of Lagrangian correspondences in $\Hom(M,M')$ can run through any sequence $(N_i)_{i=1,\ldots,r-1}$ of intermediate
symplectic manifolds of any length $r-1\in\N_0$.  Nevertheless, the
composition of two such sequences is always well defined.  In (c) the
new sequence of intermediate symplectic manifolds for
$\ul{L}\circ\ul{L}'$ is
$(N_1,\ldots,N_{r-1},N_r=M'=N_0',N_1',\ldots,N_{r'-1}')$.  This
definition descends to the quotient by the equivalence relation $\sim$
since any equivalences within $\ul{L}$ and $\ul{L}'$ combine to an
equivalence within $\ul{L}\circ\ul{L}'$.  

\begin{remark} \label{small}
\hspace{5mm}\\
\vspace{-5mm}
\begin{enumerate}
\item
The composition in $\Symp^\#$ is evidently associative:
$[\ul{L}]\circ[\ul{L}']\circ[\ul{L}''] = [(\ul{L},\ul{L}',\ul{L}'')]$.
\item
The identity in $\Hom(M,M)$ is the equivalence class $[\Delta_M]$ of
the diagonal $\Delta_M\subset M^-\times M$.
It composes as identity since e.g.\ $L_{(r-1)r}\circ\Delta_{M_r}=L_{(r-1)r}$ is always smooth and embedded.  
\item
In order to make $\Symp^\#$ a small category, one should fix a set of smooth manifolds, 
for example those embedded in Euclidean space.  Any smooth
manifold can be so embedded by Whitney's theorem.
For a fixed manifold, the possible symplectic forms again form a set, hence we have a set of objects.
Given two symplectic manifolds, the finite sequences of objects $(N_i)_{i=1,\ldots,r-1}$ again form a set, and for each fixed sequence the generalized Lagrangian correspondences between them can be exhibited as subsets satisfying submanifold, isotropy, and coisotropy conditions. Finally, we take the quotient by a relation to obtain a set of morphisms.
\end{enumerate}
\end{remark}


\begin{lemma}
\begin{enumerate}
\item If $L_a,L_b \subset M^- \times M'$ are distinct Lagrangian
submanifolds, then the corresponding morphisms $[L_a],[L_b]
\in\Hom(M,M')$ are distinct.
\item The composition of smooth Lagrangian correspondences $L\subset
M^-\times M'$ and $L'\subset M'^-\times M''$ coincides with the
geometric composition, $[L]\circ[L']=[L\circ L']$ if $L\circ L'$ is
embedded.
\end{enumerate}
\end{lemma}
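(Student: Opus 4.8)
The plan is to handle the two parts separately: part (b) is immediate from the way the equivalence relation $\sim$ is generated, whereas part (a) requires exhibiting a single invariant that can distinguish equivalence classes.

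For (b), recall that composition in $\Symp^\#$ is defined by concatenation, so $[L]\circ[L'] = [(L,L')]$, where $(L,L')$ denotes the length-two sequence with $L_{01}=L$ and $L_{12}=L'$. The relation $\sim$ is generated precisely by the move replacing an adjacent pair $(L_{(j-1)j},L_{j(j+1)})$ by its geometric composition $L_{(j-1)j}\circ L_{j(j+1)}$ whenever the latter is embedded. Since we assume $L\circ L'$ is embedded, applying this move with $j=1$ yields $(L,L')\sim(L\circ L')$, whence $[L]\circ[L']=[(L,L')]=[L\circ L']$. This is all that is required.

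For (a), I would attach to every generalized Lagrangian correspondence $\ul L=(L_{01},\dots,L_{(r-1)r})$ from $M$ to $M'$ the set-theoretic total composition
\[
R(\ul L):=\bigl\{(m,m')\in M\times M' : \exists\, x_1,\dots,x_{r-1}\ \text{with}\ x_0=m,\ x_r=m',\ (x_{j-1},x_j)\in L_{(j-1)j}\ \forall\, j\bigr\}.
\]
The crucial step is that $R$ is invariant under each generating move. Replacing $(L_{(j-1)j},L_{j(j+1)})$ by $L_{(j-1)j}\circ L_{j(j+1)}$ leaves $R$ unchanged because, by the definition of geometric composition in \eqref{int}, a pair $(x_{j-1},x_{j+1})$ lies in $L_{(j-1)j}\circ L_{j(j+1)}$ if and only if there is an intermediate $x_j\in N_j$ with $(x_{j-1},x_j)\in L_{(j-1)j}$ and $(x_j,x_{j+1})\in L_{j(j+1)}$; this is exactly the existence condition on admissible chains, and the remaining entries of the chain are untouched. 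Hence $R$ is constant on each $\sim$-equivalence class and descends to a map on morphisms. For a single correspondence one has $R((L))=L$ as a subset of $M\times M'$, so if $[L_a]=[L_b]$ then $L_a=R((L_a))=R((L_b))=L_b$ as subsets; since an embedded submanifold is determined by its underlying point set, $L_a=L_b$, and the contrapositive is exactly (a).

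The one point deserving care — and what I view as the main, albeit modest, obstacle — is that the invariant $R$ must be defined on \emph{all} sequences occurring along an equivalence chain, including those passing through arbitrary intermediate manifolds $N_i$ and those for which the composition fails to be smooth. This causes no difficulty: the definition of $R$ is purely set-theoretic and involves no transversality or embeddedness hypothesis, so it is available for every generalized correspondence, and embeddedness enters only to guarantee that a single correspondence is a genuine submanifold, not to preserve $R$. Thus the set-theoretic relation is the correct object to track, even though $\sim$ is permitted to act only through embedded compositions.
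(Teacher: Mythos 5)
Your proposal is correct and takes essentially the same route as the paper: the paper's proof also observes that the iterated set-theoretic composition $\pi([\ul{L}]):=L_{01}\circ\ldots\circ L_{(r-1)r}$ (a possibly singular Lagrangian, identical to your $R(\ul L)$) is well defined on equivalence classes and recovers $L$ for a one-term sequence, while part (b) follows directly from the definitions. You have merely spelled out the invariance check that the paper leaves implicit.
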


\begin{proof}
To see that $L_a\neq L_b\subset M^-\times M'$ define distinct morphisms
note that the projection to the (possibly singular) Lagrangian
$\pi([\ul{L}]):=L_{01}\circ\ldots\circ L_{(r-1)r} \subset M^-\times
M'$ is well defined for all $[\ul{L}]\in\Hom(M,M')$.  The rest follows
directly from the definitions.
\end{proof}

\begin{remark} \label{rmk fio1}
Lagrangian correspondences appeared in the study of Fourier integral
operators by H\"ormander and others.  Any immersed homogeneous\footnote{ An immersed
  Lagrangian correspondence $L_{01}$ is called homogeneous if its image lies
  in the complement of the zero sections,
  $L_{01}\to(T^*Q_0^-\setminus 0_{Q_0})\times (T^*Q_1\setminus
  0_{Q_1})$, and if it is conic, i.e.\ invariant under positive scalar
  multiplication in the fibres.}
Lagrangian correspondence $L_{01}\to T^*Q_0^-\times T^*Q_1$ gives
rise to a class of operators $\FIO_\rho(L_{01})$ depending on a real
parameter $\rho > 1/2$, mapping smooth functions on $Q_0$ to
distributions on $Q_1$.  These operators satisfy the composability
property similar to \eqref{maineq}.  Namely, \cite[Theorem
  4.2.2]{ho:fio1} shows that if a pair $L_{01}\to T^*Q_0^-\times
T^*Q_1$, $L_{12}\to T^*Q_1^-\times T^*Q_2$ satisfies
\begin{equation} \label{PPcond}
\begin{array}{l}
\text{$L_{01}\times L_{12}$ intersects $T^*Q_0^- \times\Delta_{T^*Q_1}\times T^*Q_2$ transversally and} \\
\text{the projection from the intersection to $T^*Q_0^- \times T^*Q_2$ is proper,
}
\end{array}
\end{equation}
then the corresponding operators are composable and
\begin{equation} \label{fio1}
FIO_\rho(L_{01}) \circ FIO_\rho(L_{12}) \subset FIO_\rho(L_{01} \circ L_{12}).
\end{equation}
Hence, similar to our construction of $\Symp^\#$, one could define a category $\Horm^\#$, whose
\begin{itemize}
\item objects are compact smooth manifolds,
\item morphisms are sequences of pairs $(L_{01},P_{01})$ of
immersed  homogeneous Lagrangian correspondences (between cotangent bundles) together with operators $P_{01} \in FIO_\rho(L_{01})$, modulo the equivalence relation that is
  generated by $(\ldots, (L_{01},P_{01}),(L_{12},P_{12}), \ldots )
  \sim (\ldots, (L_{01} \circ L_{12},P_{01} \circ P_{12}),\ldots)$ for
  $L_{01},L_{12}$ satisfying \eqref{PPcond}.
\end{itemize}
%
A morphism in this category might be called a {\em generalized Fourier
  integral operator}. \end{remark}

\section{Donaldson-Fukaya category of Lagrangians}

Throughout this paper we will use the notation and constructions for
(quilted) Floer homology and relative invariants introduced in
\cite{quiltfloer, quilts}.  In particular, we will be using the
following standing assumptions on symplectic manifolds, Lagrangian
submanifolds, and gradings; see \cite{quiltfloer} for details.

\label{M ass}
\begin{itemize}
\item[\bf(M1):] $(M,\omega)$ is {\em monotone}, that is 
$[\omega] = \tau c_1(TM) $
for some
$\tau\geq 0$.
\item[\bf(M2):] If $\tau>0$ then $M$ is compact. If $\tau=0$ then 
$M$ is (necessarily) noncompact but satisfies ``bounded geometry'' 
assumptions as in \cite{se:bo}.
\end{itemize}

\begin{itemize}
\setlength{\itemsep}{1mm}
\item[\bf(L1):]
\label{L ass}
$L\subset M$ is {\em monotone}, that is the symplectic area and Maslov index are related by $ 2 A(u) = \tau I(u)$ for all $u \in \pi_2(M,L)$,
where the $\tau\geq 0$ is (necessarily) that from (M1).
\item[\bf(L2):]
$L$ is compact and oriented.
\item[\bf(L3):] 
 $L$ has minimal Maslov number $N_L\geq 3$.
\end{itemize}

\label{G ass}
\begin{itemize}
\item[\bf(G1):]
$M$ is equipped with a Maslov covering $\Lag^N(M)$ for $N$ even, 
and the induced $2$-fold Maslov covering $\Lag^2(M)$ is the one 
given by oriented Lagrangian subspaces.
\item[\bf(G2):]
$L\subset M$ is equipped with a grading $\sigma_L^N:L\to\Lag^N(M)$, and
the induced $2$-grading $L\to\Lag^2(M)$ is the one given by the orientation of $L$.
\end{itemize}

In the following we review the construction of the Donaldson-Fukaya category $\Don(M)$ for a symplectic manifold  $(M,\omega)$ satisfying (M1-2).
The ``closed'' analog of this category, whose morphisms are
symplectomorphisms, was introduced by Donaldson in a
seminar talk \cite[12.6]{ms:jh}.  Subsequently Fukaya introduced
an \ainfty category involving Lagrangian submanifolds.  Here we describe the category arising from the Fukaya category by taking homology.

We fix a Maslov cover $\Lag^N(M) \to M$ as in (G1), which will be used to grade Floer cohomology groups,
and a {\em background class $b \in H^2(M,\Z_2)$},
which will be used to fix orientations of moduli spaces and thus
define Floer cohomology groups with $\Z$ coefficients.  In our
examples, $b$ will be either $0$ or the second Stiefel-Whitney class
$w_2(M)$ of $M$.

\begin{definition} \label{def:adm}
We say that a Lagrangian submanifold $L \subset M$ 
is {\em admissible} if it satisfies (L1-3), (G2), and the image of $\pi_1(L)$ in $\pi_1(M)$ 
is torsion.
\end{definition}

The assumption on $\pi_1(L)$ guarantees that any collection of admissible
Lagrangian submanifolds is monotone with respect to any surface in the sense of \cite{quiltfloer}.  Alternatively, one could work
with Bohr-Sommerfeld monotone Lagrangians as described
in \cite{quiltfloer}.  The assumption (L3) implies that the
Floer cohomology of any sequence is well-defined, and can be relaxed
to $N_L\geq 2$ by working with matrix factorizations as explained in
\cite{fieldb}.

\begin{definition}
A {\em brane structure} on an admissible $L$ consists of an orientation,
a grading, and a relative spin structure with background class $b$, see \cite{quiltfloer,orient} for details.  An admissible Lagrangian
equipped with a brane structure will be called a {\em Lagrangian
brane}.
\end{definition}

\begin{remark}
\begin{enumerate} 
\item We have not included in the definition of Lagrangian branes the
data of a flat vector bundle, in order to save space.  The extension
of the constructions below to this case should be straight forward and is left to the reader.
\item If one wants only $\Z_{2}$-gradings on the morphism spaces of
the Donaldson-Fukaya category, then the assumptions (G1-2) may be ignored.
\item 
If one wants only $\Z_2$ coefficients, then the background class and
relative spin structures may be ignored.
\end{enumerate}
\end{remark}

\begin{definition} \label{dfn DonFuk}
The {\em Donaldson-Fukaya category}
$\Don(M) := \Don(M,\Lag^N(M),\omega,b)$
is defined as follows:

\begin{enumerate}
\item The objects of $\Don(M)$ are Lagrangian branes in $M$.
\item  The morphism spaces of $\Don(M)$ are the $\Z_N$-graded Floer 
cohomology groups with $\Z$ coefficients
$ \Hom(L,L') := HF(L,L') $
constructed using a choice of perturbation datum consisting of a pair
$(J,H)$ of a time-dependent almost complex structure $J$ and a
Hamiltonian $H$, as in e.g.\ \cite{quiltfloer}.
\item
The composition law in the category $\Don(M)$ is defined by 
\begin{align*} 
\Hom(L,L') \times  \Hom(L',L'') &\longrightarrow \;\;\Hom(L,L'') \\ 
 (f,g)\qquad\qquad\quad &\longmapsto  f\circ g:=\Phi_{P}(f\otimes g)  ,
\end{align*}
where $\Phi_{P}$ is the relative Floer theoretic invariant associated to the
``half-pair of pants'' surface ${P}$, that is, the disk with three
markings on the boundary (two incoming ends, one outgoing end) as in
Figure~\ref{pants}.
\end{enumerate}
\end{definition}

\begin{figure}[ht]

\begin{picture}(0,0)%
\includegraphics{simplepants.pstex}%
\end{picture}%
\setlength{\unitlength}{2901sp}%
\begingroup\makeatletter\ifx\SetFigFont\undefined%
\gdef\SetFigFont#1#2#3#4#5{%
  \reset@font\fontsize{#1}{#2pt}%
  \fontfamily{#3}\fontseries{#4}\fontshape{#5}%
  \selectfont}%
\fi\endgroup%
\begin{picture}(4928,1959)(1309,-2998)
\put(1500,-1951){\makebox(0,0)[lb]{\smash{$L''$}}}
\put(2886,-1951){\makebox(0,0)[lb]{\smash{$L$}}}
\put(2171,-2626){\makebox(0,0)[lb]{\smash{$L'$}}}
\put(5761,-2401){\makebox(0,0)[lb]{\smash{$L$}}}
\end{picture}%
%
%
\caption{Composition and identity in the Donaldson-Fukaya category}
\label{pants}
\end{figure}

\begin{remark}
\begin{enumerate}
\item
Associativity of the composition follows from the standard gluing theorem (see e.g.\cite[Theorem 2.7]{quilts}) applied to the surfaces in Figure \ref{assoc}: The
two ways of composing correspond to two ways of gluing the pair of
pants. The resulting surfaces are the same (up to a deformation of the
complex structure), hence the resulting compositions are the same.
\begin{figure}[ht]
\begin{picture}(0,0)%
\includegraphics{k_assoc.pstex}%
\end{picture}%
\setlength{\unitlength}{2072sp}%
\begingroup\makeatletter\ifx\SetFigFont\undefined%
\gdef\SetFigFont#1#2#3#4#5{%
  \reset@font\fontsize{#1}{#2pt}%
  \fontfamily{#3}\fontseries{#4}\fontshape{#5}%
  \selectfont}%
\fi\endgroup%
\begin{picture}(10929,3984)(184,-5023)
\put(9991,-4651){\makebox(0,0)[lb]{$L_1$}}
\put(9321,-3600){\makebox(0,0)[lb]{$L_2$}}
\put(10571,-3600){\makebox(0,0)[lb]{$L_0$}}
\put(8551,-1951){\makebox(0,0)[lb]{$L_3$}}
\put(6391,-2851){\makebox(0,0)[lb]{$L_0$}}
\put(6166,-4786){\makebox(0,0)[lb]{$L_1$}}
\put(4951,-4786){\makebox(0,0)[lb]{$L_2$}}
\put(4591,-2851){\makebox(0,0)[lb]{$L_3$}}
\put(2476,-1951){\makebox(0,0)[lb]{$L_0$}}
\put(1600,-3600){\makebox(0,0)[lb]{$L_1$}}
\put(401,-3600){\makebox(0,0)[lb]{$L_3$}}
\put(1036,-4696){\makebox(0,0)[lb]{$L_2$}}
\put(3511,-2986){\makebox(0,0)[lb]{$=$}}
\put(7426,-3031){\makebox(0,0)[lb]{$=$}}
\end{picture}%
\caption{Associativity of composition}
\label{assoc}
\end{figure}
\item
The identity $ 1_L\in \Hom(L,L)$ is the relative Floer theoretic invariant 
$1_L:=\Phi_S\in HF(L,L)$ 
associated to a disk $S$ with a single marking (an outgoing end),
see Figure~\ref{pants}.
The identity axiom $1_{L_0} \circ f = f = f \circ 1_{L_1}$
follows from the same gluing argument applied to the surfaces
on the left and right in Figure~\ref{ident}.
Here -- in contrast to the strips counted towards the Floer differential, where the equations are $\R$-invariant -- the equation on the strip need not be $\R$-invariant and solutions are counted without quotienting by $\R$. However, as in the strip example \cite[Example 2.5]{quilts} one can choose $\R$-invariant perturbation data to make the equation $\R$-invariant. Then the only isolated solutions contributing to the count are constant, and hence the relative invariant is the identity.
\end{enumerate}
\end{remark}

\begin{figure}[ht]
\begin{picture}(0,0)%
\includegraphics{k_ident.pstex}%
\end{picture}%
\setlength{\unitlength}{2072sp}%
\begingroup\makeatletter\ifx\SetFigFont\undefined%
\gdef\SetFigFont#1#2#3#4#5{%
  \reset@font\fontsize{#1}{#2pt}%
  \fontfamily{#3}\fontseries{#4}\fontshape{#5}%
  \selectfont}%
\fi\endgroup%
\begin{picture}(8033,3031)(1309,-4070)
\put(4450,-1996){\makebox(0,0)[lb]{$L_1$}}
\put(5650,-1996){\makebox(0,0)[lb]{$L_0$}}
\put(1396,-1951){\makebox(0,0)[lb]{$L_1$}}
\put(2916,-1996){\makebox(0,0)[lb]{$L_0$}}
\put(2161,-2806){\makebox(0,0)[lb]{$L_0$}}
\put(3781,-2536){\makebox(0,0)[lb]{$=$}}
\put(6391,-2536){\makebox(0,0)[lb]{$=$}}
\put(7246,-1951){\makebox(0,0)[lb]{$L_1$}}
\put(8766,-1996){\makebox(0,0)[lb]{$L_0$}}
\put(7966,-2806){\makebox(0,0)[lb]{$L_1$}}
\put(5176,-4221){\makebox(0,0)[lb]{$f$}}
\put(8686,-3366){\makebox(0,0)[lb]{$f$}}
\put(1531,-3366){\makebox(0,0)[lb]{$f$}}
\end{picture}%
\caption{Identity axiom}
\label{ident}
\end{figure}

\begin{remark} \label{Don indep}
The category $\Don(M)$ is independent of the choices of perturbation
data involved in the definition of Floer homology and the relative invariants, up to isomorphism of categories: The relative invariants for the
infinite strip with perturbation data interpolating between two
different choices gives an isomorphism of the morphism spaces, see e.g.\ \cite{quiltfloer}. The gluing theorem implies compatibility of these
morphisms with compositions and identities.
\end{remark}

\subsection{Functor associated to symplectomorphisms}
\label{sym}

Next, we recall that any graded symplectomorphism 
(see \cite{se:bo} or \cite{quiltfloer} for the grading)
$\psi: M_0 \to M_1$ induces a functor between Donaldson-Fukaya categories.

\begin{definition}
Let  $\Phi(\psi) : \Don(M_0) \to \Don(M_1) $ be the functor defined
\begin{enumerate}
\item on the level of objects by $L \mapsto \psi(L)$,
\item on the level of morphisms by the map 
$ HF(L_0,L_1) \to HF(\psi(L_0),\psi(L_1)) $
induced by the obvious map of chain complexes
$$ CF(L_0,L_1) \to CF(\psi(L_0),\psi(L_1)), 
\ \ \ \bra{x} \mapsto \bra{\psi(x)} $$
for all $x \in \cI(L_0,L_1)$. (Here we use the Hamiltonians
$H\in\Ham(L_0,L_1)$ and $H\circ\psi^{-1}\in\Ham(\psi(L_0),\psi(L_1))$.)
\end{enumerate}
\end{definition}

Note that $\Phi(\psi)$ satisfies the functor axioms
$$ \Phi(\psi)(f\circ g) = \Phi(\psi)(f) \circ \Phi(\psi)(g), 
\ \ \ \ \ \Phi(\psi)(1_L) = 1_{\psi(L)} .$$
Furthermore if $\psi_{01}: M_0 \to M_1$ and $\psi_{12}: M_1 \to M_2$
are symplectomorphisms then
$$ \Phi(\psi_{12} \circ \psi_{01}) = \Phi(\psi_{01}) \circ
\Phi(\psi_{12}) .$$
In terms of Lagrangian correspondences this functor is
$L\mapsto L\circ{\rm graph}\,\psi$ on objects. This suggests
that one should extend the functor to more general Lagrangian 
correspondences $L_{01}\subset M_0^-\times M_1$ by 
$L\mapsto L\circ L_{01}$ on objects.
However, these compositions are generically only immersed, so
one would have to allow for singular Lagrangians as objects in $\Don(M_1)$.
Moreover, it is not clear how to extend the functor on the level
of morphisms, that is Floer cohomology groups.
In the following sections we propose some alternative definitions 
of functors associated to general Lagrangian correspondences.

\subsection{First functor associated to Lagrangian correspondences} 

We now define a first functor associated to a Lagrangian correspondence. 
Fix an integer $N > 0$ and let $\Ab_{N}$ be the category of
$\Z_{N}$-graded abelian groups.  Let $\Don(M)^\dual$ be the category
whose objects are functors from $\Don(M)$ to $\Ab_{N}$, and whose
morphisms are natural transformations.

Let $(M_0,\omega_0)$ and $(M_1,\omega_1)$ be symplectic manifolds satisfying (M1-2), equipped with $N$-fold Maslov coverings $\Lag^N(M_j)$ as in (G1) and background classes $b_j\in H^2(M_j,\Z_2)$, and let $L_{01} \subset M_0^- \times M_1$ be an admissible Lagrangian correspondence in the sense of Definition~\ref{def:adm},
equipped with a grading as in (G2) and a
relative spin structure with background class $- \pi_{0}^* b_0 + \pi_{1}^* b_1$.

\begin{definition}
The contravariant functor $ \Phi_{L_{01}}: \Don(M_0) \to
\Don(M_1)^\dual $ associated to $L_{01}$ is defined as follows:
\begin{enumerate}
\item 
On the level of objects, for every Lagrangian $L_0\subset M_0$ we define
a functor $ \Phi_{L_{01}}(L_0): \Don(M_1) \to \Ab_{N}$ by
$$  L_1 \mapsto HF(L_0, L_{01}, L_1) = HF(L_0 \times L_1,L_{01})$$
on objects $L_1\subset M_1$, and on morphisms
\begin{align*} 
 HF(L_1,L_1') &\to
\Hom( HF(L_0 ,L_{01}, L_1 ),  HF(L_0, L_{01}, L_1') ) \\
f \qquad&\mapsto \qquad\quad \bigl\{ g \mapsto \Phi_{\ul{S}_1}(g\otimes f)  \bigr\}
\end{align*}
is defined by the relative invariant for the quilted surface
$\ul{S}_1$ shown in Figure \ref{dual},
$$
\Phi_{\ul{S}_1}: HF( L_0 ,L_{01} , L_1 ) \otimes HF(L_1,L_1') 
\to HF( L_0, L_{01}, L_1') .
$$
\item
The functor on the level of morphisms associates to every $f\in HF(L_0,L_0')$
a natural transformation
$$ \Phi_{L_{01}}(f):  \Phi_{L_{01}}(L_0') 
\to \Phi_{L_{01}}(L_0), $$
which maps objects $L_1\subset M_1$ to the $\Ab_{N}$-morphism
$$
\Phi_{L_{01}}(f)(L_1) :\quad
\begin{aligned}
 HF(L_0 ',L_{01}, L_1 ) &\to HF(L_0, L_{01}, L_1) \\
 g \qquad&\mapsto \quad \Phi_{\ul{S}_0}(f\otimes g) 
\end{aligned}
$$
defined by the relative invariant for the quilted surface
$\ul{S}_0$ shown in Figure \ref{dual},
$$
\Phi_{\ul{S}_0}: HF(L_0,L_0') \otimes HF(L_0', L_{01}, L_1 )  
\to HF(L_0, L_{01}, L_1 ) .
$$
\end{enumerate}
\end{definition}

\begin{figure}[ht]
\begin{picture}(0,0)%
\includegraphics{k_tran.pstex}%
\end{picture}%
\setlength{\unitlength}{3108sp}%
\begingroup\makeatletter\ifx\SetFigFont\undefined%
\gdef\SetFigFont#1#2#3#4#5{%
  \reset@font\fontsize{#1}{#2pt}%
  \fontfamily{#3}\fontseries{#4}\fontshape{#5}%
  \selectfont}%
\fi\endgroup%
\begin{picture}(7944,2860)(3814,-4475)
\put(6031,-3481){\makebox(0,0)[lb]{$\mathbf{L_1}$}}
\put(5231,-2671){\makebox(0,0)[lb]{$\mathbf{L_{01}}$}}
\put(5276,-1996){\makebox(0,0)[lb]{$\mathbf{L_0}$}}
\put(4566,-3481){\makebox(0,0)[lb]{$\mathbf{L_1'}$}}
\put(5311,-4381){\makebox(0,0)[lb]{$f$}}
\put(7111,-2626){\makebox(0,0)[lb]{$g$}}
\put(4141,-1976){\makebox(0,0)[lb]{$\ul{S}_1 :$}}
\put(4546,-2456){\makebox(0,0)[lb]{$\mathbf{M_0}$}}
\put(5276,-3166){\makebox(0,0)[lb]{$\mathbf{M_1}$}}
\put(8300,-2626){\makebox(0,0)[lb]{$g$}}
\put(10081,-4326){\makebox(0,0)[lb]{$f$}}
\put(10746,-3481){\makebox(0,0)[lb]{$\mathbf{L_0}$}}
\put(10046,-1996){\makebox(0,0)[lb]{$\mathbf{L_1}$}}
\put(9291,-3481){\makebox(0,0)[lb]{$\mathbf{L_0'}$}}
\put(8956,-1971){\makebox(0,0)[lb]{$\ul{S}_0 :$}}
\put(10046,-3166){\makebox(0,0)[lb]{$\mathbf{M_0}$}}
\put(9181,-2456){\makebox(0,0)[lb]{$\mathbf{M_1}$}}
\put(10056,-2671){\makebox(0,0)[lb]{$\mathbf{L_{01}}$}}
\end{picture}%
\caption{Lagrangian functor for morphisms}
\label{dual}
\end{figure}

The composition axiom for the functors $\Phi_{L_{01}}(L_0)$ and the
commutation axiom for the natural transformations 
follow from the quilted gluing theorem \cite[Theorem 3.13]{quilts}\footnote{
A complete account of the gluing analysis for quilted surfaces can be found in \cite{mau:gluing}.
}
applied to Figures \ref{comp} and \ref{comp nat}.
\begin{figure}[ht]
\begin{picture}(0,0)%
\includegraphics{k_compaxiom.pstex}%
\end{picture}%
\setlength{\unitlength}{2279sp}%
\begingroup\makeatletter\ifx\SetFigFont\undefined%
\gdef\SetFigFont#1#2#3#4#5{%
  \reset@font\fontsize{#1}{#2pt}%
  \fontfamily{#3}\fontseries{#4}\fontshape{#5}%
  \selectfont}%
\fi\endgroup%
\begin{picture}(10374,3679)(-1541,-4034)
\put(-89,-1250){\makebox(0,0)[lb]{$\mathbf{L_{01}}$}}
\put(4250,-3446){\makebox(0,0)[lb]{$g$}}
\put(7150,-3496){\makebox(0,0)[lb]{$f$}}
\put(4906,-1800){\makebox(0,0)[lb]{$\mathbf{L_{01}}$}}
\put(3331,-2646){\makebox(0,0)[lb]{$\mathbf{L_1''}$}}
\put(6076,-2556){\makebox(0,0)[lb]{$\mathbf{L_1'}$}}
\put(4906,-1051){\makebox(0,0)[lb]{$\mathbf{L_0}$}}
\put(7886,-2546){\makebox(0,0)[lb]{$\mathbf{L_1}$}}
\put(-89,-555){\makebox(0,0)[lb]{$\mathbf{L_0}$}}
\put(-550,-3976){\makebox(0,0)[lb]{$g$}}
\put(426,-4016){\makebox(0,0)[lb]{$f$}}
\put(-150,-3816){\makebox(0,0)[lb]{$\mathbf{L_1'}$}}
\put(1891,-1816){\makebox(0,0)[lb]{$=$}}
\put(-800,-2221){\makebox(0,0)[lb]{$\mathbf{L_1''}$}}
\put(551,-2221){\makebox(0,0)[lb]{$\mathbf{L_1}$}}
\put(1200,-4100){\makebox(0,0)[lb]{$\Phi_{L_{01}}(L_0) (f\circ g) 
=(\Phi_{L_{01}}(L_0) f) \circ (\Phi_{L_{01}}(L_0) g)$}}
\end{picture}%
\caption{Composition axiom for Lagrangian functors}
\label{comp}
\end{figure}
\begin{figure}[ht]
\begin{picture}(0,0)%
\includegraphics{k_nattraxiom.pstex}%
\end{picture}%
\setlength{\unitlength}{1782sp}%
\begingroup\makeatletter\ifx\SetFigFont\undefined%
\gdef\SetFigFont#1#2#3#4#5{%
  \reset@font\fontsize{#1}{#2pt}%
  \fontfamily{#3}\fontseries{#4}\fontshape{#5}%
  \selectfont}%
\fi\endgroup%
\begin{picture}(13569,3724)(-1721,-3449)
\put(3386,-2646){\makebox(0,0)[lb]{$\mathbf{L_1}$}}
\put(2656,-3556){\makebox(0,0)[lb]{$f_1$}}
\put(-1169,-881){\makebox(0,0)[lb]{$\mathbf{L_0}$}}
\put(1576,-1096){\makebox(0,0)[lb]{$\mathbf{L_0'}$}}
\put(586,-2566){\makebox(0,0)[lb]{$\mathbf{L_1'}$}}
\put(496,-1751){\makebox(0,0)[lb]{$\mathbf{L_{01}}$}}
\put(-314, 74){\makebox(0,0)[lb]{$f_0$}}
\put(6301,-2621){\makebox(0,0)[lb]{$\mathbf{L_1'}$}}
\put(7201,-3501){\makebox(0,0)[lb]{$f_1$}}
\put(10746,-881){\makebox(0,0)[lb]{$\mathbf{L_0'}$}}
\put(10081,119){\makebox(0,0)[lb]{$f_0$}}
\put(4861,-1636){\makebox(0,0)[lb]{$=$}}
\put(7831,-1751){\makebox(0,0)[lb]{$\mathbf{L_{01}}$}}
\put(7831,-1006){\makebox(0,0)[lb]{$\mathbf{L_0}$}}
\put(9136,-2521){\makebox(0,0)[lb]{$\mathbf{L_1}$}}
\put(500,0){\makebox(0,0)[lb]{
\smash{{\SetFigFont{9}{11.0}{\familydefault}{\mddefault}{\updefault}
$\Phi_{L_{01}}(f_0)(L_1')\circ\Phi_{L_{01}}(L_0')(f_1)
=\Phi_{L_{01}}(L_0)(f_1)\circ \Phi_{L_{01}}(f_0)(L_1) $}}}}
\end{picture}%
\caption{Commutation axiom for Lagrangian functors}
\label{comp nat}
\end{figure}

Clearly the functor $\Phi_{L_{01}}$ is unsatisfactory, since given two
Lagrangian correspondences $L_{01} \subset M_0^- \times M_1,
\ \ L_{12} \subset M_1^- \times M_2 $
it is not clear how to define the composition of the associated functors
$ \Phi_{L_{01}}: \Don(M_0) \to \Don(M_1)^\dual$ and 
$\Phi_{L_{12}}: \Don(M_1) \to \Don(M_2)^\dual$.
As a solution (perhaps not the only one) we will define in Section \ref{Don}
a category sitting in between $\Don(M)$ and $\Don(M)^\dual$.
This will allow for the definition of composable functors for general 
Lagrangian correspondences in Section \ref{Funk}.

\section{Donaldson-Fukaya category of generalized Lagrangians}
\label{Don}

In this section we extend the Donaldson-Fukaya category $\Don(M)$ to a
category $\Don^\#(M)$ which has generalized Lagrangian submanifolds as
objects.  Hence $\Don^\#(M)$ sits in between $\Don(M)$ and
$\Don(M)^\dual$.  One might draw an analogy here with the way
square-integrable functions sit between smooth functions and
distributions.  $\Don^\#(M)$ admits a functor to $\Don(M)^\dual$,
whose image is roughly speaking the subcategory of $\Don(M)^\dual$
generated by objects of geometric origin.
This extension of the Donaldson-Fukaya category is particularly
natural in our application to $2+1$-dimensional topological field
theory: One expects to associate a Lagrangian submanifold to any
three-manifold with boundary, but our constructions in fact yield generalized Lagrangian submanifolds that arise naturally from a decomposition into elementary cobordisms (or compression bodies).

Let $(M,\omega)$ be a symplectic manifold satisfying (M1-2) with
monotonicity constant $\tau\geq 0$.  We fix a Maslov cover $\Lag^N(M)
\to M$ as in (G1) and a background class $b \in H^2(M,\Z_2)$.  

\begin{definition}\label{def:gen adm lag}
\begin{enumerate}
\item
A generalized Lagrangian submanifold of $M$ is a generalized Lagrangian correspondence $\ul{L}$ from $\{\pt\}$ to $M$, in the sense of Definition \ref{Lag cor}.
That is, $\ul{L}=(L_{(-r)(-r+1)},\ldots,L_{(-1)0})$ is a sequence of
Lagrangian correspondences $L_{(i-1)i}\subset N_{i-1}^- \times
N_i$ for a sequence $N_{-r},\ldots,N_0$ of any length $r\geq 0$ of
symplectic manifolds with $N_{-r}=\{\pt\}$ a point and $N_0 = M$.  
\item 
We call a generalized Lagrangian $\ul{L}$ {\em admissible} if each $N_i$ satisfies (M1-2) with the monotonicity constant $\tau\geq 0$, each $L_{(i-1)i}$ satisfies (L1-3), and
the image of each $\pi_1(L_{(i-1)i})$ in $\pi_1(N_{i-1}^- \times N_i)$ is torsion.
\end{enumerate}
\end{definition}

Again, one could replace the torsion assumption on fundamental groups by Bohr-Sommerfeld monotonicity as described in \cite{quiltfloer}.
Note that an (admissible) Lagrangian submanifold $L\subset M$ is an (admissible)
generalized Lagrangian with $r=0$.  We picture a generalized Lagrangian 
$\ul{L}$ as a sequence
$$ \begin{diagram} \node{\{\pt\}} \arrow{e,t}{L_{(-r)(-r+1)}}
\node{N_{-r}} \arrow{e,t}{L_{(-r+1)(-r+2)}} 
\node{\ldots} \arrow{e,t}{L_{(-2)(-1)}}
\node{N_{-1}} \arrow{e,t}{L_{(-1)0}} \node{N_0 = M}
\end{diagram} .$$
Given two generalized Lagrangians $\ul{L},\ul{L}'$ of $M$ 
we can transpose one and concatenate them to a
sequence of Lagrangian correspondences from 
$\{\pt\}$ to $\{\pt\}$,
$$ \begin{diagram} \node{\{\pt\}} \arrow{e,t}{L_{(-r)(-r+1)}} 
\node{\ldots} \arrow{e,t}{L_{(-1)0}} \node{N_0 = M = N_0'}
\arrow{e,t}{(L_{(-1)0}')^t} \node{\ldots}
\arrow{e,t}{(L_{(-r')(-r'+1)}')^t} \node{\{\pt\}}
\end{diagram} .$$
The Floer cohomology of this sequence 
(as defined in \cite{quiltfloer}) is the natural generalization of the Floer cohomology
for pairs of Lagrangian submanifolds.  Hence we define
\begin{equation}\label{def HF}
HF(\ul{L},\ul{L}') :=
HF(L_{(-r)(-r+1)},\ldots,L_{(-1)0},(L_{(-1)0}')^t,\ldots,
(L_{(-r')(-r'+ 1)}')^t ) .
\end{equation}
Note here that every such sequence arising from a pair of admissible
generalized Lagrangians is automatically monotone 
by a Lemma of \cite{quiltfloer}.

\begin{definition}
The {\em generalized Donaldson-Fukaya category}
$$\Don^\#(M) := \Don^\#(M,\Lag^N(M),\omega,b) $$ is defined as follows: 
\begin{enumerate}
\item 
Objects of $\Don^\#(M)$ are admissible generalized Lagrangians of
$M$, equipped with orientations, a grading, and a relative spin
structure (see \cite{quiltfloer}).
%
\item
Morphism spaces of $\Don^\#(M)$ are the $\Z_N$-graded Floer
cohomology groups (see \eqref{def HF})
$$ \Hom(\ul{L},\ul{L}') := HF(\ul{L},\ul{L}')[d], \quad d = \hh \Bigl(
\sum_k \dim(N_k) + \sum_{k'} \dim(N'_{k'}) \Bigr) ,$$
given by choices of a perturbation datum and widths as described in
\cite{quiltfloer} and degree shift $d$.  For $\Z$-coefficients the Floer
cohomology groups are modified by the inclusion of additional
determinant lines as below in \eqref{canoncf}.
\item
Composition of morphisms in $\Don^\#(M)$,
\begin{align*} 
\Hom(\ul{L},\ul{L}') \times  \Hom(\ul{L}',\ul{L}'') &\longrightarrow \;\;\Hom(\ul{L},\ul{L}'') \\ 
 (f,g)\qquad\qquad\quad &\longmapsto  f\circ g:=\Phi_{\ul{P}}(f\otimes g) 
\end{align*}
is defined by the relative invariant $\Phi_{\ul{P}}$ associated to the
quilted half-pair of pants surface $\ul{P}$ in Figure
\ref{fancypants}, with the following orderings: The relative
invariant is independent of the ordering of the patches with one
outgoing end by a Remark in \cite{quilts}.  
The remaining patches with two incoming ends are ordered from the top down, that is,
starting with those furthest from the boundary.
\end{enumerate}
\end{definition}

\begin{figure}[ht]
\begin{picture}(0,0)%
\includegraphics{fancypants.pstex}%
\end{picture}%
\setlength{\unitlength}{2901sp}%
\begingroup\makeatletter\ifx\SetFigFont\undefined%
\gdef\SetFigFont#1#2#3#4#5{%
  \reset@font\fontsize{#1}{#2pt}%
  \fontfamily{#3}\fontseries{#4}\fontshape{#5}%
  \selectfont}%
\fi\endgroup%
\begin{picture}(8048,3552)(-1390,-4546)  
\put(1351,-2446){\makebox(0,0)[lb]{\smash{{\SetFigFont{8}{9.6}{\familydefault}{\mddefault}{\updefault}$\mathbf{M}$}}}}
\put(2791,-2491){\makebox(0,0)[lb]{\smash{{\SetFigFont{8}{9.6}{\familydefault}{\mddefault}{\updefault}$\ldots$}}}}
\put(3531,-2336){\makebox(0,0)[lb]{\smash{{\SetFigFont{8}{9.6}{\familydefault}{\mddefault}{\updefault}$\mathbf{L_{\scriptscriptstyle(-r)(-r+1)}}$}}}}
\put(200,-2750){\makebox(0,0)[lb]{\smash{{\SetFigFont{8}{9.6}{\familydefault}{\mddefault}{\updefault}$\ldots$}}}}
\put(-134,-2761){\makebox(0,0)[lb]{\smash{{\SetFigFont{8}{9.6}{\familydefault}{\mddefault}{\updefault}$\mathbf{L''_{\scriptscriptstyle(-2)(-1)}}$}}}}
 \put(946,-3301){\makebox(0,0)[lb]{\smash{{\SetFigFont{8}{9.6}{\familydefault}{\mddefault}{\updefault}$\mathbf{L'_{\scriptscriptstyle(-2)(-1)}}$}}}}
\put(1171,-2896){\makebox(0,0)[lb]{\smash{{\SetFigFont{8}{9.6}{\familydefault}{\mddefault}{\updefault}$\mathbf{L'_{\scriptscriptstyle(-1)0}}$}}}}
\put(1266,-4156){\makebox(0,0)[lb]{\smash{{\SetFigFont{8}{9.6}{\familydefault}{\mddefault}{\updefault}$\mathbf{L'_{\hspace{-2mm}\scriptscriptstyle\overset{(-r')}{(-r'+1)}}}$}}}}
 \put(1700,-3150){\makebox(0,0)[lb]{\smash{{\SetFigFont{8}{9.6}{\familydefault}{\mddefault}{\updefault}$\mathbf{N'_{\scriptscriptstyle-1}}$}}}}
\put(706,-2041){\makebox(0,0)[lb]{\smash{{\SetFigFont{8}{9.6}{\familydefault}{\mddefault}{\updefault}$\mathbf{N''_{\scriptscriptstyle-1}}$}}}}
\put(46,-1861){\makebox(0,0)[lb]{\smash{{\SetFigFont{8}{9.6}{\familydefault}{\mddefault}{\updefault}$\mathbf{N''_{\scriptscriptstyle-r''+1}}$}}}}
\put(1401,-3751){\makebox(0,0)[lb]{\smash{{\SetFigFont{8}{9.6}{\familydefault}{\mddefault}{\updefault}$\mathbf{N'_{\scriptscriptstyle-r'+1}}$}}}}
\put(2386,-2986){\makebox(0,0)[lb]{\smash{{\SetFigFont{8}{9.6}{\familydefault}{\mddefault}{\updefault}$\mathbf{L_{\scriptscriptstyle(-1)0}}$}}}}
\put(2026,-2266){\makebox(0,0)[lb]{\smash{{\SetFigFont{8}{9.6}{\familydefault}{\mddefault}{\updefault}$\mathbf{N_{\scriptscriptstyle -1}}$}}}}
\put(2656,-2186){\makebox(0,0)[lb]{\smash{{\SetFigFont{8}{9.6}{\familydefault}{\mddefault}{\updefault}$\mathbf{N_{\scriptscriptstyle-r+1}}$}}}}
\put(-1214,-2311){\makebox(0,0)[lb]{\smash{{\SetFigFont{8}{9.6}{\familydefault}{\mddefault}{\updefault}$\mathbf{L''_{\scriptscriptstyle(-r'')(-r''+1)}}$}}}}
\put(-89,-3166){\makebox(0,0)[lb]{\smash{{\SetFigFont{8}{9.6}{\familydefault}{\mddefault}{\updefault}$\mathbf{L''_{\scriptscriptstyle(-1)0}}$}}}}
\put(2611,-2716){\makebox(0,0)[lb]{\smash{{\SetFigFont{8}{9.6}{\familydefault}{\mddefault}{\updefault}$\mathbf{L_{\scriptscriptstyle(-2)(-1)}}$}}}}
\put(4920,-1951){\makebox(0,0)[lb]{$\ul{L}''$}}
\put(6321,-1951){\makebox(0,0)[lb]{$\ul{L}$}}
\put(5556,-2626){\makebox(0,0)[lb]{$\ul{L}'$}}
\put(4296,-1591){\makebox(0,0)[lb]{$=:$}}
\end{picture}%
\caption{Quilted pair of pants} \label{quiltpop}
\label{fancypants}
\end{figure}

\begin{remark}
\begin{enumerate}
\item
Identities $1_{\ul{L}}\in \Hom(\ul{L},\ul{L})$ are furnished by
relative invariants $1_{\ul{L}}:=\Phi_{\ul{S}}\in \Hom(\ul{L},\ul{L})$
associated to the quilted disk $\ul{S}$ in Figure \ref{fancy id}, 
with patches ordered from the bottom up,
that is, starting with those closest to the boundary.
\item
The identity and associativity axioms are
satisfied with $\Z_2$ coefficients by the quilted gluing theorem \cite[Theorem 3.13]{quilts}
applied to the quilted versions of Figures \ref{assoc}, \ref{ident}.
\item
Both the identity and composition are degree $0$ by
a Remark of \cite{quilts}.
\item
$\Don^\#(M)$ is a small category. The objects form a set by the same arguments as in Remark~\ref{small}~(c); the morphisms are evidently constructed as set.
\end{enumerate}
\end{remark}

\begin{figure}[ht]
\begin{picture}(0,0)%
\includegraphics{fancy_id.pstex}%
\end{picture}%
\setlength{\unitlength}{2901sp}%
\begingroup\makeatletter\ifx\SetFigFont\undefined%
\gdef\SetFigFont#1#2#3#4#5{%
  \reset@font\fontsize{#1}{#2pt}%
  \fontfamily{#3}\fontseries{#4}\fontshape{#5}%
  \selectfont}%
\fi\endgroup%
\begin{picture}(5169,2140)(4594,-3224)
\put(9800,-2041){\makebox(0,0)[lb]{$\ul{L}$}}
\put(5851,-2581){\makebox(0,0)[lb]{\smash{{\SetFigFont{8}{9.6}{\familydefault}{\mddefault}{\updefault}$L_{(-2)(-1)}$}}}}
\put(6661,-3166){\makebox(0,0)[lb]{\smash{{\SetFigFont{8}{9.6}{\familydefault}{\mddefault}{\updefault}$L_{(-r)(-r+1)}$}}}}
\put(5986,-2761){\makebox(0,0)[lb]{\smash{{\SetFigFont{8}{9.6}{\familydefault}{\mddefault}{\updefault}$\ldots$}}}}
\put(5806,-2086){\makebox(0,0)[lb]{\smash{{\SetFigFont{8}{9.6}{\familydefault}{\mddefault}{\updefault}$L_{(-1)0}$}}}}
\put(5761,-1546){\makebox(0,0)[lb]{\smash{{\SetFigFont{8}{9.6}{\familydefault}{\mddefault}{\updefault}$M$}}}}
\put(5356,-2221){\makebox(0,0)[lb]{\smash{{\SetFigFont{8}{9.6}{\familydefault}{\mddefault}{\updefault}$N_{-1}$}}}}
\put(4951,-2761){\makebox(0,0)[lb]{\smash{{\SetFigFont{8}{9.6}{\familydefault}{\mddefault}{\updefault}$N_{-r+1}$}}}}
\put(7966,-1501){\makebox(0,0)[lb]{$=:$}}
\end{picture}%
\caption{Quilted identity} \label{quiltid}
\label{fancy id}
\end{figure}

\begin{remark} \label{zmod}
To obtain the axioms with $\Z$ coefficients requires a modification of
the Floer cohomology groups, incorporating the determinant lines in a
more canonical way. This will be treated in detail in \cite{orient}, so
we only give a sketch here: For each intersection point $\ul{x} \in \cI(\ul{L},\ul{L}')$
we say that an {\em orientation} for $\ul{x}$ consists of the
following data: A partially quilted surface\footnote{
See \cite{orient} for the definition of partial quilts.
For example, the standard cup orientation for $\ul{x}=(x_1,\ldots,x_N)$
will use unquilted cups $S_i$ associated to each $T_{x_i} M_i$, and
identified via seams on the strip-like ends.
}
$\ul{S}$ with a single end,
complex vector bundles $\ul{E}$ over $\ul{S}$, and 
totally real subbundles $\cF$ over the boundaries and seams, 
such that near infinity on the strip-like ends 
$\ul{E}$ and $\cF$ are given by $(T_{x_i} M_i)$ and $T_{\ul{x}}\ul{L},T_{\ul{x}}\ul{L}'$ ; 
a real Cauchy-Riemann operator $D_{\ul{E},\cF}$; 
an orientation on the determinant line $\det(D_{\ul{E},\cF})$.  
We say that two orientations for $\ul{x}$ are {\em isomorphic} if 
the two problems have isomorphic bundles $\ul{E}$, 
and the surfaces, boundary and seam conditions
are deformation equivalent after a possible re-ordering of boundary
components etc., and the orientations are related by the isomorphism
of determinant lines arising from re-ordering. Let $O(\ul{x})$ denote
the space of isomorphism classes of orientations for $\ul{x}$.  Define
\begin{equation} \label{canoncf}
 \widetilde{CF}(\ul{L},\ul{L}') = \bigoplus_{\ul{x} \in \cI(\ul{L},\ul{L}')} O(\ul{x})
\otimes_{\Z_2} \Z .\end{equation}
The Floer coboundary operator extends canonically to an operator of
degree $1$ on $\widetilde{CF}(\ul{L},\ul{L}')$, and let
$\widetilde{HF}(\ul{L},\ul{L}')$ denote its cohomology.  This is similar to
the definition given in e.g. Seidel \cite[(12f)]{se:bo}, except that
we allow more general surfaces.  The group $\widetilde{HF}(\ul{L},\ul{L}')$
is of infinite rank over $\Z$, but it has finite rank over a suitable
graded-commutative Novikov ring generated by determinant lines.

The relative invariants extend to
operators $\widetilde{\Phi}_{\ul{S}}$ operating on the tensor product
of (extended) Floer cohomologies.  In particular, the quilted pair of
pants defines an operator
$$ \widetilde{\Phi}_{\ul{P}} : \widetilde{HF}(\ul{L},\ul{L}')
\otimes \widetilde{HF}(\ul{L}',\ul{L}'') \to
\widetilde{HF}(\ul{L},\ul{L}'') .$$
%
%
If we fix orientations for each generator $\langle\ul{x}\rangle$, as in 
the definition of $HF$, then the gluing sign for the first gluing 
(to the second incoming end) in the proof of associativity, 
Figure \ref{assoc}, is $+1$. For the second gluing (to the first incoming end) 
when applied to 
$\langle \ul{x}_1 \rangle \otimes \langle \ul{x}_2 \rangle\otimes \langle \ul{x}_3 \rangle$ the sign is
$(-1)^{|\ul{x}_3| \hh \sum_i \dim(N^{(1)}_i) } $.
Here $N^{(j)}_i$ denotes the sequence of symplectic manifolds underlying the generalized Lagrangian correspondence $\ul{L}_j$.
In addition, the two gluings induce different orderings of patches in the
glued quilted surface, which are related by the additional sign
$(-1)^{\bigl(\hh \sum_i \dim(N^{(1)}_i) \bigr) \bigl(\hh \sum_i \dim(N^{(2)}_i) \bigr) } $.
Combined together, these factors cancel the sign arising 
from the re-ordering of determinants in the definitions of 
$\widetilde{\Phi}_{\ul{P}}(\widetilde{\Phi}_{\ul{P}}(\langle \ul{x}_1\rangle \otimes
\langle \ul{x}_2 \rangle) \otimes \langle \ul{x}_3 \rangle )$ and
$\widetilde{\Phi}_{\ul{P}}( \langle \ul{x}_1 \rangle \otimes \widetilde{\Phi}_{\ul{P}}(\langle \ul{x}_2\rangle \otimes \langle \ul{x}_3 \rangle))$.

%
The identity axiom involves gluing a quilted cup with a quilted pair
of pants; the orderings of the patches for the quilted cup and quilted
pants above are chosen so that the gluing sign for gluing the quilted
cup with quilted pants to obtain a quilted strip is $+1$ for
gluing into the second argument, and 
$(-1)^{|\ul{x}|\hh \sum_i \dim(N_i)}$ for gluing into the first argument. 
Again, the additional sign is absorbed into the
isomorphism of determinant lines induced by gluing.
\end{remark}

\begin{convention} \label{convention}
To simplify pictures of quilts we will use the following conventions
indicated in Figure \ref{convent} : A generalized Lagrangian submanifold 
$\ul{L}$ of $M$ can be used as ``boundary condition'' for a surface mapping to
$M$ in the sense that the boundary arc that is labeled by the sequence
$\ul{L}=(L_{(-r)(-r+1)},\ldots,L_{(-1)0})$ of Lagrangian
correspondences from $\{{\rm pt}\}$ to $M$ is replaced by a sequence
of strips mapping to $N_{-1},\ldots,N_{-r+1}$, with seam conditions in
$L_{(-1)0},\ldots,L_{(-r+2)(-r+1)}$ and a final boundary condition in
$L_{(-r)(-r+1)}$.
Similarly, a generalized Lagrangian correspondence 
$\ul{L}$ between $M_-$ and $M_+$ can be used as ``seam condition'' between 
surfaces mapping to $M_\pm$ in the sense that the seam
that is labeled by the sequence $\ul{L}=(L_{01},\ldots,L_{(r-1)r})$ of
Lagrangian correspondences from $M_-$ to $M_+$ is replaced by a
sequence of strips mapping to $M_{1},\ldots,M_{r-1}$ with seam
conditions in $L_{01},\ldots,L_{(r-1)r}$.
\end{convention}

\begin{figure}[ht]
\begin{picture}(0,0)%
\includegraphics{k_convent.pstex}%
\end{picture}%
\setlength{\unitlength}{2565sp}%
\begingroup\makeatletter\ifx\SetFigFont\undefined%
\gdef\SetFigFont#1#2#3#4#5{%
 \reset@font\fontsize{#1}{#2pt}%
 \fontfamily{#3}\fontseries{#4}\fontshape{#5}%
 \selectfont}%
\fi\endgroup%
\begin{picture}(9794,6382)(1179,-7337)
\put(5801,-2600){\makebox(0,0)[lb]{$:=$}}
\put(3351,-2450){\makebox(0,0)[lb]{$\mathbf{\ul{L}}$}}
\put(9526,-1900){\makebox(0,0)[lb]{$\mathbf{L_{(-r+1)(-r+2)}}$}}
\put(8251,-1586){\makebox(0,0)[lb]{$\mathbf{N_{-r+1}}$}}
\put(9526,-1170){\makebox(0,0)[lb]{$\mathbf{L_{(-r)(-r+1)}}$}}
\put(8251,-2111){\makebox(0,0)[lb]{$\mathbf{N_{-r+2}}$}}
\put(2701,-2836){\makebox(0,0)[lb]{$\mathbf{M}$}}
\put(9526,-3580){\makebox(0,0)[lb]{$\mathbf{L_{(-1)0}}$}}
\put(9526,-3061){\makebox(0,0)[lb]{$\mathbf{L_{(-2)(-1)}}$}}
\put(8251,-3236){\makebox(0,0)[lb]{$\mathbf{N_{-1}}$}}
\put(8251,-3661){\makebox(0,0)[lb]{$\mathbf{M}$}}
\put(8326,-2536){\makebox(0,0)[lb]{$\mathbf{\vdots}$}}
\put(9526,-2536){\makebox(0,0)[lb]{$\mathbf{\vdots}$}}
\put(5876,-6000){\makebox(0,0)[lb]{$:=$}}
\put(8326,-5500){\makebox(0,0)[lb]{$\mathbf{M_{r-1}}$}}
\put(2776,-6300){\makebox(0,0)[lb]{$\mathbf{M_-}$}}
\put(9501,-6886){\makebox(0,0)[lb]{$\mathbf{L_{01}}$}}
\put(8326,-6586){\makebox(0,0)[lb]{$\mathbf{M_1}$}}
\put(8326,-7151){\makebox(0,0)[lb]{$\mathbf{M_-}$}}
\put(8401,-5886){\makebox(0,0)[lb]{$\mathbf{\vdots}$}}
\put(9501,-5886){\makebox(0,0)[lb]{$\mathbf{\vdots}$}}
\put(3676,-6061){\makebox(0,0)[lb]{$\mathbf{\ul{L}}$}}
\put(2776,-5836){\makebox(0,0)[lb]{$\mathbf{M_+}$}}
\put(8326,-5011){\makebox(0,0)[lb]{$\mathbf{M_+}$}}
\put(9501,-5290){\makebox(0,0)[lb]{$\mathbf{L_{(r-1)r}}$}}
\put(9501,-6361){\makebox(0,0)[lb]{$\mathbf{L_{12}}$}}
\end{picture}%
\caption{Conventions on using generalized Lagrangians and Lagrangian 
correspondences as boundary and seam conditions}
\label{convent}
\end{figure}

\begin{remark} \label{Donsharp indep}
As for $\Don(M)$, the category $\Don^\#(M)$ is independent of the
choices of perturbation data and widths up to isomorphism of
categories, see Remark \ref{Don indep} and the proofs of independence of
quilted Floer cohomology and relative quilt invariants in \cite{quiltfloer,quilts}.
\end{remark}

\begin{proposition}  The map
$\ul{L} \mapsto \ul{L}^\dual$, for a generalized Lagrangian $\ul{L}$ of $M$ 
given by
$$
\ul{L}^\dual(L_0) := \Hom(\ul{L},L_0) =
HF(L_{-r(-r+1)},\ldots,L_{(-1)0},L_0) [d]
$$
for all Lagrangian submanifolds $L_0\subset M$ and 
with degree shift $d=\hh  \sum_k \dim(N_k)$, 
extends to a contravariant functor
$ \Don^\#(M) \to \Don(M)^\dual .$
\end{proposition}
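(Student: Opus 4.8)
The plan is to supply the three pieces of data of a contravariant functor---the object map $\ul{L}\mapsto\ul{L}^\dual$, a morphism map sending each $\ul{f}\in HF(\ul{L},\ul{L}')$ to a natural transformation, and the verification of the functor and naturality axioms---and to observe that every verification reduces to the quilted gluing theorem \cite[Theorem 3.13]{quilts}, exactly as in the composition, identity, and commutation axioms already established for $\Don(M)$, $\Don^\#(M)$, and $\Phi_{L_{01}}$.

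First I would promote $\ul{L}^\dual$ to an honest covariant functor $\Don(M)\to\Ab_N$. On objects it is the stated formula $L_0\mapsto HF(\ul{L},L_0)[d]$, $d=\hh\sum_k\dim(N_k)$, where $\ul{L}$ enters as a boundary condition in the sense of Convention \ref{convention}. On a morphism $f_0\in HF(L_0,L_0')$ I would set $\ul{L}^\dual(f_0)\colon HF(\ul{L},L_0)\to HF(\ul{L},L_0')$, $g\mapsto\Phi_{\ul{P}}(g\otimes f_0)$, the relative invariant of the quilted half-pair-of-pants of Figure \ref{fancypants} with $\ul{L}$ read as a boundary condition; the degree shift $d$ is arranged so that this map has degree zero. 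Compatibility with composition in $\Don(M)$, i.e.\ $\ul{L}^\dual(f_0\circ f_0')=\ul{L}^\dual(f_0)\circ\ul{L}^\dual(f_0')$, and with identities, $\ul{L}^\dual(1_{L_0})=\Id$, then follow by gluing two half-pairs-of-pants (respectively a quilted cup of Figure \ref{fancy id} to a half-pair-of-pants) and recognizing the glued quilt as the single surface computing the composite, up to a deformation of complex structure that leaves the relative invariant unchanged. This is the functor argument for $\Phi_{L_{01}}(L_0)$ of Figure \ref{comp}, now in the quilted setting.

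Next I would define the morphism map. To $\ul{f}\in HF(\ul{L},\ul{L}')=\Hom_{\Don^\#(M)}(\ul{L},\ul{L}')$ I would assign the natural transformation $\Phi(\ul{f})\colon(\ul{L}')^\dual\Rightarrow\ul{L}^\dual$ whose component at $L_0$ is $\Phi(\ul{f})(L_0)\colon HF(\ul{L}',L_0)\to HF(\ul{L},L_0)$, $g\mapsto\Phi_{\ul{P}}(\ul{f}\otimes g)$, once more a quilted half-pair-of-pants invariant, now with both $\ul{L}$ and $\ul{L}'$ entering as boundary conditions. The placement of the source $\ul{f}$ as the first factor produces the contravariance. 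That $\{\Phi(\ul{f})(L_0)\}_{L_0}$ is a natural transformation---the commutativity of the square relating $\ul{L}^\dual(f_0)$, $(\ul{L}')^\dual(f_0)$ and the components $\Phi(\ul{f})(L_0),\Phi(\ul{f})(L_0')$ for each $f_0\colon L_0\to L_0'$---is the commutation axiom and follows from gluing as in Figure \ref{comp nat}. Contravariant functoriality, $\Phi(\ul{f}\circ\ul{f}')=\Phi(\ul{f}')\circ\Phi(\ul{f})$ in the appropriate order together with $\Phi(1_{\ul{L}})=\Id_{\ul{L}^\dual}$, again follows by gluing two quilted pairs of pants (respectively a quilted cup), reusing the quilted versions of the associativity and identity figures \ref{assoc}, \ref{ident}. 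Independence of all choices of perturbation data and widths is covered by Remark \ref{Donsharp indep}.

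The only genuine obstacle is bookkeeping rather than analysis. One must check that each glued pair of quilted surfaces is diffeomorphic to the single surface computing the composite invariant, so that the gluing theorem applies; keep the variances straight so that the contravariance and the composition order in $\Don(M)^\dual$ come out correctly; and, over $\Z$, track the orientation and determinant-line signs of Remark \ref{zmod}, verifying that the gluing signs and the patch-reordering signs cancel exactly as in the associativity and identity computations recorded there. Granting the quilted gluing theorem and these sign computations, no new analytic input is needed.
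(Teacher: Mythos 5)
Your proposal is correct and follows essentially the same route as the paper: both define $\ul{L}^\dual$ on morphisms and the natural transformation $f^\dual$ via the quilted pair-of-pants relative invariant (i.e.\ the composition already constructed in $\Don^\#(M)$), and both reduce all axioms to the quilted gluing theorem applied to quilted versions of Figures \ref{comp} and \ref{comp nat}. The only point worth noting is that the sign bookkeeping you flag as the "genuine obstacle" is in fact trivial here, as the paper observes: the orientations are independent of the ordering of patches because every patch involved has one boundary component and one outgoing end.
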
 

\begin{proof}  
The functor $\ul{L}^\dual:\Don(M)\to\Ab_{N}$ can be defined on morphisms
by
$$
\ul{L}^\dual : 
\begin{aligned}
\Hom(L_1,L_1') &\to \Hom(\Hom(\ul{L},L_1),\Hom(\ul{L},L_1'))  \\
f \quad & \mapsto \quad\bigl\{  
g \mapsto g\circ f = \Phi_{\ul{P}}(g\otimes f)  \bigr\}
\end{aligned}
$$
using the composition on $ \Don^\#(M)$.  To morphisms $f\in
\Hom(\ul{L},\ul{L}')$ of $\Don^\#(M)$ we can then associate the natural
transformation $f^\dual: \ul{L}'^\dual \to \ul{L}^\dual$, which maps
every object $L_1\subset M$ of $\Don(M)$ to the following
$\Ab_{N}$-morphism $f^\dual(L_1)$:
$$ \Hom(\ul{L}',L_1) \to  \Hom(\ul{L},L_1), \quad 
g  \mapsto f\circ g = \Phi_{\ul{P}}(f\otimes g) ,$$
again given by composition on $ \Don^\#(M)$.
The axioms follow from the quilted gluing theorem \cite[Theorem 3.13]{quilts}
applied to jazzed-up versions of Figures \ref{comp} and \ref{comp nat} 
(which show the example $\ul{L}=(L_0,L_{01})$, $\ul{L}'=(L_0',L_{01})$).
In this case the orientations are independent of the ordering of patches
since all have one boundary component and one outgoing end.
\end{proof}

\section{Composable functors associated to Lagrangian correspondences}
\label{Funk}

Let $M_0$ and $M_1$ be two symplectic manifolds satisfying (M1-2) with
the same monotonicity constant $\tau\geq 0$.  We fix Maslov covers $\Lag^N(M_i)
\to M_i$ as in (G1) and background classes $b_i \in H^2(M_i,\Z_2)$.
Given an admissible Lagrangian correspondence $L_{01} \subset M_0^- \times M_1$ in the sense of Section~\ref{corcat}, we can now define a functor
$\Phi(L_{01}): \Don^\#(M_0) \to \Don^\#(M_1)$.
More precisely, we assume that $L_{01}$ satisfies (L1-3), and
the image of $\pi_1(L_{01})$ in $\pi_1(M_0^- \times M_1)$ is torsion.

\begin{definition} The functor $ \Phi(L_{01}): \Don^\#(M_0) \to \Don^\#(M_1) $ 
is defined as follows: 
\begin{enumerate}
\item
On the level of objects, $ \Phi(L_{01}) $ is concatenation of the 
Lagrangian correspondence to the sequence of Lagrangian correspondences:
For a generalized Lagrangian $\ul{L} = (L_{-r(-r+1)},\ldots,L_{(-1)0})$ of $M_0$
with corresponding sequence of symplectic manifolds 
$(\{\pt\},N_{-r+1},\ldots,N_{-1},M_0)$ we put
$$ \Phi(L_{01})(\ul{L}) := ( \ul{L}, L_{01} ) 
:= ( L_{(-r)(-r+1)},\ldots,L_{(-1)0},L_{01}) $$
with the corresponding symplectic manifolds
$(\{\pt\},N_{-r+1},\ldots,N_{-1},M_0,M_1)$;
\item
On the level of morphisms, for any pair $\ul{L},\ul{L}'$
of generalized Lagrangians in $M_0$, 
$$ \Phi(L_{01}) := \Phi_{\ul{S}} \,: \; \Hom(\ul{L},\ul{L}') \to
\Hom(\Phi(L_{01})(\ul{L}),\Phi(L_{01})(\ul{L}')) $$
is the relative invariant associated to the quilted surface
$\ul{S}$ with two punctures and one interior circle, as in Figure
\ref{morphism}.
\end{enumerate}
\end{definition}

\begin{figure}[ht]
\begin{picture}(0,0)%
\includegraphics{k_funct.pstex}%
\end{picture}%
\setlength{\unitlength}{3108sp}%
\begingroup\makeatletter\ifx\SetFigFont\undefined%
\gdef\SetFigFont#1#2#3#4#5{%
  \reset@font\fontsize{#1}{#2pt}%
  \fontfamily{#3}\fontseries{#4}\fontshape{#5}%
  \selectfont}%
\fi\endgroup%
\begin{picture}(6743,2499)(-580,-3358)
\put(4120,-2536){\makebox(0,0)[lb]{$M_0$}}
\put(4120,-1401){\makebox(0,0)[lb]{$M_1$}}
\put(4120,-1956){\makebox(0,0)[lb]{$L_{01}$}}
\put(3691,-3121){\rotatebox{90.0}{\makebox(0,0)[lb]{\smash{{\SetFigFont{8}{9.6}{\familydefault}{\mddefault}{\updefault}$\mathbf{L'_{(-2)(-1)}}$}}}}}
\put(3991,-3101){\rotatebox{90.0}{\makebox(0,0)[lb]{\smash{{\SetFigFont{8}{9.6}{\familydefault}{\mddefault}{\updefault}$\mathbf{L'_{(-1)0}}$}}}}}
\put(4616,-3101){\rotatebox{90.0}{\makebox(0,0)[lb]{\smash{{\SetFigFont{8}{9.6}{\familydefault}{\mddefault}{\updefault}$\mathbf{L_{(-1)0}}$}}}}}
\put(4956,-3101){\rotatebox{90.0}{\makebox(0,0)[lb]{\smash{{\SetFigFont{8}{9.6}{\familydefault}{\mddefault}{\updefault}$\mathbf{L_{(-2)(-1)}}$}}}}}
\put(5650,-3301){\rotatebox{90.0}{\makebox(0,0)[lb]{\smash{{\SetFigFont{8}{9.6}{\familydefault}{\mddefault}{\updefault}$\mathbf{L_{(-r)(-r+1)}}$}}}}}
\put(2906,-3401){\rotatebox{90.0}{\makebox(0,0)[lb]{\smash{{\SetFigFont{8}{9.6}{\familydefault}{\mddefault}{\updefault}$\mathbf{L'_{(-r')(-r'+1)}}$}}}}}
\put(5906,-1886){\rotatebox{90.0}{\makebox(0,0)[lb]{\smash{{\SetFigFont{8}{9.6}{\familydefault}{\mddefault}{\updefault}$\mathbf{N_{-r+1}}$}}}}}
\put(5300,-1931){\rotatebox{90.0}{\makebox(0,0)[lb]{\smash{{\SetFigFont{8}{9.6}{\familydefault}{\mddefault}{\updefault}$\mathbf{N_{-1}}$}}}}}
\put(3280,-1876){\rotatebox{90.0}{\makebox(0,0)[lb]{\smash{{\SetFigFont{8}{9.6}{\familydefault}{\mddefault}{\updefault}$\mathbf{N'_{-1}}$}}}}}
\put(2826,-1976){\rotatebox{90.0}{\makebox(0,0)[lb]{\smash{{\SetFigFont{8}{9.6}{\familydefault}{\mddefault}{\updefault}$\mathbf{N'_{-r'+1}}$}}}}}
\put(800,-2671){\makebox(0,0)[lb]{$\ul{L}$}}
\put(250,-2536){\makebox(0,0)[lb]{$M_0$}}
\put(250,-1401){\makebox(0,0)[lb]{$M_1$}}
\put(-300,-2671){\makebox(0,0)[lb]{$\ul{L}'$}}
\put(250,-1956){\makebox(0,0)[lb]{$L_{01}$}}
\put(1846,-2131){\makebox(0,0)[lb]{$=$}}
\end{picture}%
\caption{The Lagrangian correspondence functor $\Phi(L_{01})$ on morphisms}
\label{morphism}
\end{figure}

\begin{remark}  In the case that $M_1$ is a point, the map for morphisms
is the dual of the pair of pants product.  
\end{remark}

For composable morphisms $f\in \Hom(\ul{L},\ul{L}')$, $g\in
\Hom(\ul{L}',\ul{L}'')$ one shows $\Phi_{L_{01}}(f \circ g) =
\Phi_{L_{01}}(f) \circ \Phi_{L_{01}}(g)$ by applying the quilted gluing theorem \cite[Theorem 3.13]{quilts}
to the gluings shown in Figure \ref{functoraxiom} (simplifying
the picture by Convention \ref{convention}), which yield
homotopic quilted surfaces.  
The gluing signs for both gluings are positive.  Similarly, the second
gluing shows that $\Phi(L_{01})(1_{\ul{L}}) = 1_{\Phi(L_{01})(\ul{L})}$,
since we have ordered the patches of the quilted cup from the outside in.

\begin{figure}[ht]
\begin{picture}(0,0)%
\includegraphics{k_funcax.pstex}%
\end{picture}%
\setlength{\unitlength}{2279sp}%
\begingroup\makeatletter\ifx\SetFigFont\undefined%
\gdef\SetFigFont#1#2#3#4#5{%
  \reset@font\fontsize{#1}{#2pt}%
  \fontfamily{#3}\fontseries{#4}\fontshape{#5}%
  \selectfont}%
\fi\endgroup%
\begin{picture}(10947,6849)(-3419,-9238)
\put(991,-4921){\makebox(0,0)[lb]{$M_1$}}
\put(-1850,-4976){\makebox(0,0)[lb]{$M_0$}}
\put(-1529,-5656){\makebox(0,0)[lb]{$\ul{L}$}}
\put(676,-5236){\makebox(0,0)[lb]{$L_{01}$}}
\put(-1574,-4336){\makebox(0,0)[lb]{$\ul{L}''$}}
\put(3286,-3756){\makebox(0,0)[lb]{$M_0$}}
\put(6591,-4976){\makebox(0,0)[lb]{$M_1$}}
\put(5800,-5326){\makebox(0,0)[lb]{$L_{01}$}}
\put(3871,-4931){\makebox(0,0)[lb]{$\ul{L}'$}}
\put(2926,-6756){\makebox(0,0)[lb]{$\ul{L}$}}
\put(2971,-3121){\makebox(0,0)[lb]{$\ul{L}''$}}
\put(2116,-4876){\makebox(0,0)[lb]{$=$}}
\put(-2834,-4931){\makebox(0,0)[lb]{$\ul{L}'$}}
\put(2206,-6206){\makebox(0,0)[lb]{$f$}}
\put(2206,-3701){\makebox(0,0)[lb]{$g$}}
\put(-3419,-4301){\makebox(0,0)[lb]{$g$}}
\put(-3374,-5606){\makebox(0,0)[lb]{$f$}}
\put(-2834,-8396){\makebox(0,0)[lb]{$M_0$}}
\put(-2714,-7851){\makebox(0,0)[lb]{$\ul{L}$}}
\put(-600,-8341){\makebox(0,0)[lb]{$M_1$}}
\put(-950,-8656){\makebox(0,0)[lb]{$L_{01}$}}
\put(1801,-8341){\makebox(0,0)[lb]{$M_1$}}
\put(1486,-8656){\makebox(0,0)[lb]{$L_{01}$}}
\put(316,-8296){\makebox(0,0)[lb]{$=$}}
\put(1001,-8241){\makebox(0,0)[lb]{$M_0$}}
\put(921,-7531){\makebox(0,0)[lb]{$\ul{L}$}}
\end{picture}%
\caption{The functor axioms for $\Phi_{L_{01}}$}
\label{functoraxiom}
\end{figure}

\begin{remark}  
The surfaces of the first gluing in Figure \ref{functoraxiom} can equivalently 
be represented as degenerations of one quilted disk.
The corresponding one-parameter family in Figure \ref{functoraxiom sta} 
is the one-dimensional {\em multiplihedron} of Stasheff, see \cite{st:hs}, 
\cite[p. 113]{ma:op}, to which we will return in \cite{Ainfty}.
\end{remark}

\begin{figure}[ht]
\includegraphics[width=4in,height=1in]{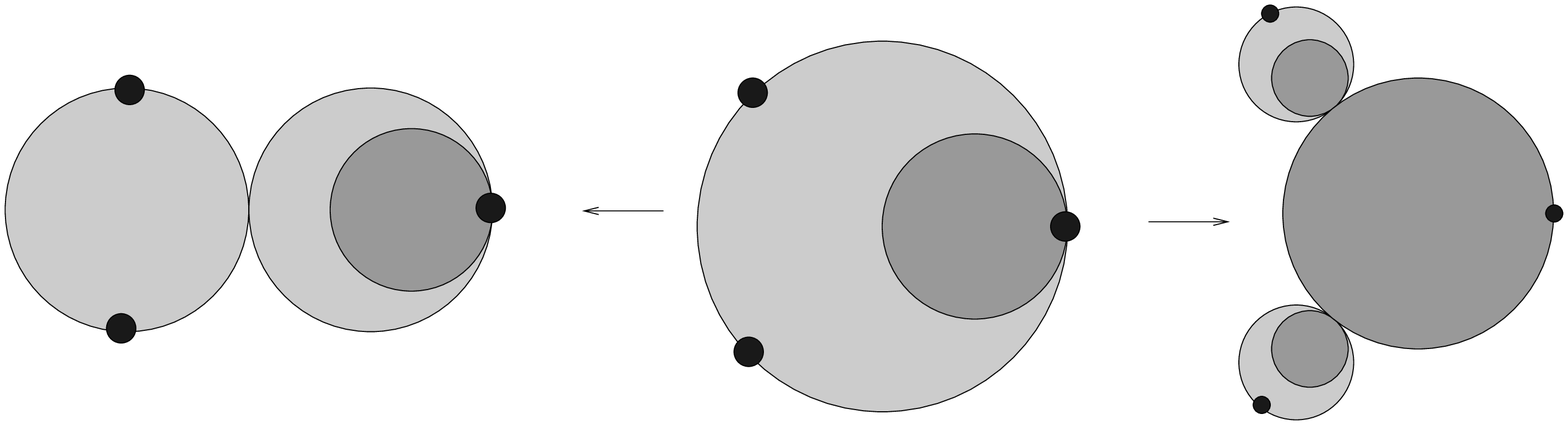}
\caption{Degeneration view of the first functor axiom}
\label{functoraxiom sta}
\end{figure}

With this new definition, any two functors associated to smooth, 
compact, admissible Lagrangian correspondences,
$\Phi(L_{01}):\Don^\#(M_0) \to \Don^\#(M_1) $
and $\Phi(L_{12}):\Don^\#(M_1) \to \Don^\#(M_2) $,
are clearly composable.
More generally, consider a sequence 
$$\ul{L}_{0r} = (L_{01},\ldots,L_{(r-1)r})$$
of Lagrangian correspondences $L_{(j-1)j} \subset M_{j-1}^- \times M_{j}$.
(That is, $\ul{L}_{0r}$ is a generalized Lagrangian correspondence from $M_0$
to $M_r$ in the sense of Definition~\ref{Lag cor}.)
Assume that $\ul{L}_{0r}$ is admissible in the sense of Section~\ref{corcat} below.
We can then define a functor by composition
\begin{equation}\label{funklong}
 \Phi(\ul{L}_{0r}):=\Phi(L_{01})\circ\ldots\circ \Phi(L_{(r-1)r}) : 
\Don^\#(M_0) \to \Don^\#(M_r) .
\end{equation}

\begin{figure}[ht]
\begin{picture}(0,0)%
\includegraphics{funkcomp.pstex}%
\end{picture}%
\setlength{\unitlength}{2279sp}%
\begingroup\makeatletter\ifx\SetFigFont\undefined%
\gdef\SetFigFont#1#2#3#4#5{%
  \reset@font\fontsize{#1}{#2pt}%
  \fontfamily{#3}\fontseries{#4}\fontshape{#5}%
  \selectfont}%
\fi\endgroup%
\begin{picture}(10715,4749)(-1181,-3358)
\put(201,839){\makebox(0,0)[lb]{$M_2$}}
\put(2296,-826){\makebox(0,0)[lb]{$=$}}
\put(6256,-826){\makebox(0,0)[lb]{$=$}}
\put(3600,-1771){\makebox(0,0)[lb]{$\ul{L}'$}}
\put(4250,-1861){\makebox(0,0)[lb]{$M_0$}}
\put(5041,-1771){\makebox(0,0)[lb]{$\ul{L}$}}
\put(6976,-1566){\makebox(0,0)[lb]{$\ul{L}'$}}
\put(7426,-1306){\makebox(0,0)[lb]{$M_0$}}
\put(8101,-1566){\makebox(0,0)[lb]{$\ul{L}$}}
\put(7500,-716){\makebox(0,0)[lb]{$\ul{L}_{02}$}}
\put(7486,-61){\makebox(0,0)[lb]{$M_2$}}
\put(4141,-656){\makebox(0,0)[lb]{$M_1$}}
\put(4596,-856){\makebox(0,0)[lb]{$L_{01}$}}
\put(4606,-256){\makebox(0,0)[lb]{$L_{12}$}}
\put(4191,99){\makebox(0,0)[lb]{$M_2$}}
\put(406,344){\makebox(0,0)[lb]{$L_{12}$}}
\put(-314,-2916){\makebox(0,0)[lb]{$\ul{L}'$}}
\put(170,-2746){\makebox(0,0)[lb]{$M_0$}}
\put(826,-2916){\makebox(0,0)[lb]{$\ul{L}$}}
\put(181,-1456){\makebox(0,0)[lb]{$M_1$}}
\put(321,-2000){\makebox(0,0)[lb]{$L_{01}$}}
\end{picture}
\caption{The composition $\Phi(L_{01})\circ\ldots\circ \Phi(L_{(r-1)r})$
is given by a relative invariant for the sequence 
$\ul{L}_{0r} = (L_{01},\ldots,L_{(r-1)r})$. (Here $r=2$.)}
\label{funkcomp}
\end{figure}

\begin{remark} \label{rmk funkcomp}
On the level of morphisms, the functor $\Phi(\ul{L}_{0r})$ is given by
the relative invariant associated to the quilted surface $\ul{S}$ in 
Figure \ref{funkcomp},
$$
\Phi(\ul{L}_{0r}) = \Phi_{\ul{S}} :
\Hom(\ul{L},\ul{L}') \to \Hom(\Phi(\ul{L}_{0r})(\ul{L}),\Phi(\ul{L}_{0r})(\ul{L}')) 
$$ 
for all generalized Lagrangian submanifolds
$\ul{L},\ul{L}'\in\Obj(\Don^\#(M_0))$, with patches with two outgoing
ends ordered from bottom up.  This follows from the quilted gluing theorem applied to the gluing shown in Figure \ref{funkcomp}.
\end{remark}

\subsection{Functors associated to composed Lagrangian correspondences and graphs}

The next two strip-shrinking results are summarized from
\cite{isom,quiltfloer,quilts}.  The first theorem describes the
isomorphism of Floer cohomology under geometric composition, while the
second describes the behavior of the relative invariants.  

\begin{theorem} \label{main2}  
Let $\ul{L}= (L_{01},\ldots,L_{r(r+1)})$ be a cyclic sequence of Lagrangian correspondences between symplectic manifolds $M_0,\ldots,M_{r+1}=M_0$.
Suppose that
\begin{enumerate} 
\item the symplectic manifolds all satisfy (M1-2) with 
the same monotonicity constant $\tau$,
\item  the Lagrangian correspondences all satisfy (L1-3),
\item  the sequence $\ul{L}$ is monotone, relatively spin, and graded;
%
\item for some $1\leq j\leq r$ the composition $L_{(j-1)j} \circ
L_{j(j+1)}$ is embedded in the sense of Definition \ref{embedded},
\end{enumerate}
Then with respect to the induced relative spin structure, orientation,
and grading on the modified sequence $\ul{L}'=(L_{01},\ldots, L_{(j-1)j} \circ L_{j(j+1)}, \ldots,L_{r(r+1)})$ there exists a canonical isomorphism of graded groups
\begin{equation*}
HF(\ul{L})= HF(\ldots L_{(j-1)j} , L_{j(j+1)} \ldots)
\overset{\sim}{\to} HF(\ldots L_{(j-1)j} \circ L_{j(j+1)}
\ldots)=HF(\ul{L}') ,
\end{equation*}
induced by the canonical identification of intersection points.
\end{theorem}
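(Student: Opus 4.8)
The plan is to produce the isomorphism at the chain level in two stages: first match the generators, then match the differentials via strip-shrinking. Throughout, the Floer cohomology of the cyclic sequence $\ul{L}$ is computed from a quilted strip consisting of one infinite strip $\R\times[0,\delta_i]$ mapping to each $M_i$, cyclically joined along seams carrying the correspondences $L_{i(i+1)}$; the generators are the generalized intersection points, i.e.\ tuples $(x_0,\ldots,x_r)$ with $(x_i,x_{i+1})\in L_{i(i+1)}$ read cyclically, and the differential counts rigid finite-energy holomorphic quilted strips.

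Step one is the \emph{canonical identification of intersection points}. A generator of $HF(\ul{L}')$ omits the $M_j$-coordinate and uses the composed seam $L_{(j-1)j}\circ L_{j(j+1)}$ between $M_{j-1}$ and $M_{j+1}$. Because the composition is embedded (Definition~\ref{embedded}), the fibre product $L_{j(j+1)}\times_{M_j}L_{(j-1)j}$ maps bijectively onto $L_{(j-1)j}\circ L_{j(j+1)}$, so each generalized intersection point of $\ul{L}'$ has a unique lift $x_j\in M_j$ with $(x_{j-1},x_j)\in L_{(j-1)j}$ and $(x_j,x_{j+1})\in L_{j(j+1)}$; this gives a bijection on generators, with transversality ensuring nondegeneracy on both sides. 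I would then check that the bijection preserves the $\Z_N$-grading and the sign data: removing the $M_j$-patch changes the relevant dimension count by $\dim M_j$, which is exactly compensated by the change in the Maslov index of the linearized problem at the composed seam, so the gradings match. The relative spin structure on $L_{(j-1)j}\circ L_{j(j+1)}$ is \emph{defined} precisely so that the induced orientations agree, and I would cite \cite{orient} for this bookkeeping (with the shift by $w_2(M_j)$ when $M_j$ is not spin, as noted in the introduction).

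The second and main step is to identify the differentials, which is the strip-shrinking analysis of \cite{isom}. The idea is to let the width $\delta_j$ of the $M_j$-patch tend to $0$; in this adiabatic limit the two seams $L_{(j-1)j}$ and $L_{j(j+1)}$ merge into the single seam $L_{(j-1)j}\circ L_{j(j+1)}$, so holomorphic quilts for $\ul{L}$ with small $\delta_j$ should correspond to holomorphic quilts for $\ul{L}'$. Concretely I would invoke from \cite{isom}: (i) a uniform compactness statement giving a priori energy and derivative bounds, so that sequences of solutions with $\delta_j\to 0$ converge to a solution for $\ul{L}'$; (ii) a gluing/implicit-function statement producing, for each rigid regular solution of $\ul{L}'$ and each sufficiently small $\delta_j$, a unique nearby solution of $\ul{L}$; and (iii) a surjectivity statement that for small enough $\delta_j$ \emph{every} rigid solution of $\ul{L}$ arises this way. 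Together these give a bijection of the index-one moduli spaces for all small $\delta_j$, hence $\partial_{\ul{L}}=\partial_{\ul{L}'}$ under the identification of Step one, and therefore $HF(\ul{L})\overset{\sim}{\to}HF(\ul{L}')$.

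The hard part is (iii), excluding solutions that fail to converge to a solution of $\ul{L}'$. The danger is energy concentrating at the shrinking seam and producing a \emph{figure-eight bubble}: a nonconstant finite-energy quilted plane along the merged seam. The embeddedness of $L_{(j-1)j}\circ L_{j(j+1)}$ is exactly what is used in \cite{isom} to rule this out, via a removable-singularity theorem for the figure eight together with an isoperimetric/annulus estimate and quantitative unique continuation in the adiabatic limit; embeddedness also guarantees that the limiting seam condition is the smooth submanifold $L_{(j-1)j}\circ L_{j(j+1)}$ rather than its immersed image. Granting these analytic inputs from \cite{isom}, the bijection of moduli spaces and hence the chain isomorphism follow, and passing to cohomology yields the theorem.
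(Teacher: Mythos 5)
This paper does not actually prove Theorem \ref{main2}: it is explicitly presented as a result ``summarized from'' the companion papers, with the proof living in \cite{isom} (and the orientation/grading bookkeeping in \cite{orient}). Your outline is a faithful summary of the strategy of that reference --- bijection of generators from embeddedness of the composition, adiabatic strip-shrinking to match the index-one moduli spaces, and exclusion of figure-eight bubbling as the essential analytic obstruction --- and, like the paper itself, it defers all of the hard content (uniform elliptic estimates, compactness, gluing, and the removal-of-singularity/energy-quantization argument for the figure eight) to \cite{isom}, so it is a correct sketch at the same level of detail as the source rather than an independent proof.
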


\begin{theorem} \label{intertwine}  
Consider a quilted surface $\ul{S}$ containing a patch $S_{\ell_1}$ that
is diffeomorphic to $\R \times [0,1]$ and attached via seams
$\sigma_{01}=\{(\ell_0,I_0),(\ell_1,\R\times\{0\})\}$ and 
$\sigma_{12}=\{(\ell_1,\R\times\{1\}),(\ell_2,I_2)\}$ to boundary components 
$I_0,I_2$ of other surfaces $S_{\ell_0}, S_{\ell_2}$. 
Let $\ul{M}$ be symplectic manifolds (satisfying (M1-2), (G1) with the same $\tau\geq 0$ and $N\in\N$) labeling the patches of $\ul{S}$, and $\L$ be Lagrangian boundary and seam conditions for $\ul{S}$ such that all Lagrangians in $\cL$ 
satisfy (L1-3), (G2), and $\cL$ is monotone and relative spin in the sense of \cite{quilts}.
Suppose that the Lagrangian correspondences $L_{\sigma_{01}} \subset
M_{\ell_0}^-\times M_{\ell_1}$, $L_{\sigma_{12}} \subset M_{\ell_1}^-
\times M_{\ell_2}$ associated to the boundary components of $S_{\ell_1}$ are
such that $L_{\sigma_{01}} \circ L_{\sigma_{12}}$ is embedded.  

Let $\ul{S}'$ denote the quilted
surface obtained by removing the patch $S_{\ell_1}$ and corresponding seams 
and replacing it by a new seam $\sigma_{02}:=\{(\ell_0,I_0),(\ell_2,I_2)\}$.
We define Lagrangian boundary conditions $\L'$ for $\ul{S}'$ by
$L_{\sigma_{02}}:=L_{\sigma_{01}} \circ L_{\sigma_{12}}$ . 
Then the isomorphisms in Floer cohomology 
$ \Psi_{\ul{e}} : HF(\ul{L}_{\ul{e}}) \to HF(\ul{L}'_{\ul{e}}) $
for each end $\ul{e}\in\E(\ul{S})\cong\E(\ul{S}')$
intertwine with the relative invariants:
$$\Phi_{\ul{S}'} \circ \biggl( \bigotimes_{\ul{e}\in \E_-}
\Psi_{\ul{e}} \biggr) = \biggl( \bigotimes_{\ul{e} \in \E_+}
\Psi_{\ul{e}} \biggr)\circ \Phi_{\ul{S}} \, [n_{\ell_1} d] . $$
Here $2n_{\ell_1}$ is the dimension of $M_{\ell_1}$, 
and $d=1,0,\,\text{or}\; -1$ according to whether the removed strip $S_{\ell_1}$ 
has two outgoing ends, one in- and one outgoing, or two incoming ends.
\end{theorem}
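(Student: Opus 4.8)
The plan is to realize $\ul{S}'$ as the $\delta\to 0$ limit of a one-parameter family $\ul{S}^\delta$ of quilted surfaces in which the width of the strip patch $S_{\ell_1}$ is rescaled to $\delta$, and then to combine width-independence of the relative invariants with the strip-shrinking analysis of \cite{isom}. First I would set up the family: let $\ul{S}^\delta$ agree with $\ul{S}$ except that $S_{\ell_1}\cong\R\times[0,\delta]$ carries width $\delta$, with the seams $\sigma_{01},\sigma_{12}$ and all other patch, boundary, and seam data unchanged. Since the relative invariants are independent of the choice of widths (as recorded in Remark~\ref{Donsharp indep}, following \cite{quilts}), for all $\delta$ bounded away from $0$ the maps $\Phi_{\ul{S}^\delta}$ coincide with $\Phi_{\ul{S}}$ after the canonical identification of the Floer cohomologies at the ends. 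Thus it suffices to analyze $\lim_{\delta\to 0}\Phi_{\ul{S}^\delta}$.

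The core of the argument is the strip-shrinking limit. Here I would invoke the a priori estimates and compactness of \cite{isom}: because the geometric composition $L_{\sigma_{01}}\circ L_{\sigma_{12}}$ is embedded in the sense of Definition~\ref{embedded}, no figure-eight bubbling or energy concentration occurs along the collapsing strip, so any sequence of finite-energy quilts for $\ul{S}^{\delta_\nu}$ with $\delta_\nu\to 0$ converges to a quilt for $\ul{S}'$ with the composed seam condition $L_{\sigma_{02}}=L_{\sigma_{01}}\circ L_{\sigma_{12}}$. Conversely, a gluing and implicit-function argument produces, for each rigid quilt for $\ul{S}'$, a unique rigid quilt for $\ul{S}^\delta$ with $\delta$ small, and this bijection respects the canonical identification of generators at the ends -- which is precisely the isomorphism $\Psi_{\ul{e}}$ of Theorem~\ref{main2}. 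Matching these isolated solutions together with their coherent orientations (as in \cite{orient}), I obtain that $\Phi_{\ul{S}'}=\bigl(\bigotimes_{\E_+}\Psi_{\ul{e}}\bigr)\circ\Phi_{\ul{S}}\circ\bigl(\bigotimes_{\E_-}\Psi_{\ul{e}}\bigr)^{-1}$ for all sufficiently small $\delta$, which rearranges directly to the asserted intertwining relation.

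It remains to account for the degree shift $[n_{\ell_1}d]$. Deleting the patch $S_{\ell_1}$ of dimension $2n_{\ell_1}$ alters the Fredholm index of the linearized Cauchy-Riemann operator, and hence the expected dimension of the moduli spaces, by a contribution of magnitude $n_{\ell_1}$ whose sign is governed by the grading conventions for strip-like ends: an outgoing end contributes with one sign and an incoming end with the opposite, so that the net shift is $n_{\ell_1}d$ with $d\in\{1,0,-1\}$ according to whether $S_{\ell_1}$ has two outgoing, one incoming and one outgoing, or two incoming ends. Carrying this index bookkeeping through the identification of rigid solutions above yields exactly the shift $[n_{\ell_1}d]$.

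The main obstacle is the strip-shrinking limit itself: controlling holomorphic quilts on a domain that degenerates as $\delta\to 0$ lies outside standard Fredholm theory, and one must simultaneously rule out bubbling into the thin strip, establish the bijection of rigid solutions, and verify that it is orientation-preserving. This is precisely the analysis carried out in \cite{isom}, and the embeddedness hypothesis on $L_{\sigma_{01}}\circ L_{\sigma_{12}}$ is what makes that analysis apply; the present proof amounts to packaging those results, together with width-independence and an index count, into the stated intertwining statement.
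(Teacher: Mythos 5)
Your proposal is correct in outline and follows the same route the paper itself relies on: the paper states Theorem~\ref{intertwine} without proof, explicitly as a result ``summarized from'' \cite{isom,quiltfloer,quilts}, and the strip-shrinking scheme you describe (width-independence of the relative invariants, exclusion of figure-eight bubbling under the embeddedness hypothesis, the bijection of rigid solutions with coherent orientations, and the index bookkeeping for the shift $[n_{\ell_1}d]$) is precisely the content deferred to \cite{isom}. Since you correctly identify that the analytic core lies in that reference rather than claiming to reprove it, your account matches the paper's treatment.
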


As first application of these results we will show that the composed
functor $\Phi(L_{01})\circ \Phi(L_{12}):\Don^\#(M_0) \to \Don^\#(M_2)$
is isomorphic to the functor $\Phi(L_{01}\circ L_{12})$ of the
geometric composition $L_{01}\circ L_{12}\subset M_0^-\times M_2$, if
the latter is embedded.  More precisely and more generally, we have
the following result.

\begin{theorem} \label{thm compiso}
Let $\ul{L}_{0r}= (L_{01},\ldots,L_{(r-1)r})$ and $\ul{L}'_{0r'}=
(L'_{01},\ldots,L'_{(r'-1)r'})$ be two admissible generalized
Lagrangian correspondence from $M_0$ to $M_{r}=M_{r'}$.  Suppose that
they are equivalent in the sense of Section~\ref{symp cat} 
through a series of embedded compositions of consecutive Lagrangian
correspondences and such that each intermediate generalized
Lagrangian correspondence is admissible.
Then for any two admissible generalized Lagrangian submanifolds
$\ul{L},\ul{L}'\in\Obj(\Don^\#(M_0))$ there is an isomorphism
$$\Psi:\Hom(\Phi(\ul{L}_{0r})(\ul{L}),\Phi(\ul{L}_{0r})(\ul{L}'))
\to \Hom(\Phi(\ul{L}'_{0r'})(\ul{L}),\Phi(\ul{L}'_{0r'})(\ul{L}'))$$
which intertwines the functors on the morphism level,
$$
\Psi \circ \Phi(\ul{L}_{0r}) = \Phi(\ul{L}'_{0r'}) \;: \;
\Hom(\ul{L},\ul{L}') \to \Hom(\Phi(\ul{L}'_{0r'})(\ul{L}),\Phi(\ul{L}'_{0r'})(\ul{L}')) .
$$
\end{theorem}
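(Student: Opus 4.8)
The plan is to reduce the statement to a single embedded composition and then read it off directly from the two strip-shrinking results, Theorems~\ref{main2} and~\ref{intertwine}; no genuinely new analysis should be required, since all of the hard work is packaged in those theorems.

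First I would reduce to the elementary case. By hypothesis $\ul{L}_{0r}$ and $\ul{L}'_{0r'}$ are joined by a finite chain of intermediate \emph{admissible} generalized Lagrangian correspondences, each obtained from the previous one by composing a single consecutive pair $L_{(j-1)j},L_{j(j+1)}$ whose geometric composition is embedded in the sense of Definition~\ref{embedded}. Since the sought isomorphisms compose and the intertwining identity is transitive, it suffices to treat one step, i.e.\ the case $\ul{L}'_{0r'}=(L_{01},\ldots,L_{(j-1)j}\circ L_{j(j+1)},\ldots,L_{(r-1)r})$.

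Next I would identify the relevant surfaces. By Remark~\ref{rmk funkcomp} the functor $\Phi(\ul{L}_{0r})$ on morphisms is the relative invariant $\Phi_{\ul{S}}$ of the quilted surface $\ul{S}$ of Figure~\ref{funkcomp}, whose patches map to $M_0,\ldots,M_r$ and which contains a strip patch mapping to $M_j$ bounded by the seams labeled $L_{(j-1)j}$ and $L_{j(j+1)}$. Removing this $M_j$-patch and replacing its two seams by a single seam labeled $L_{(j-1)j}\circ L_{j(j+1)}$ produces exactly the surface $\ul{S}'$ computing $\Phi(\ul{L}'_{0r'})$, so $\Phi_{\ul{S}'}=\Phi(\ul{L}'_{0r'})$ on morphisms. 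This is precisely the passage from $\ul{S}$ to $\ul{S}'$ in the hypothesis of Theorem~\ref{intertwine}, with $M_{\ell_1}=M_j$ and $L_{\sigma_{01}}\circ L_{\sigma_{12}}=L_{(j-1)j}\circ L_{j(j+1)}$ embedded. I would then read off the conclusion: the surface $\ul{S}$ has one incoming end carrying $\Hom(\ul{L},\ul{L}')$ and one outgoing end carrying $\Hom(\Phi(\ul{L}_{0r})(\ul{L}),\Phi(\ul{L}_{0r})(\ul{L}'))$. The cyclic sequence underlying the incoming end is built only from $\ul{L}$ and $\ul{L}'$ and never involves $M_j$, so the Floer isomorphism $\Psi_{\ul{e}_-}$ of Theorem~\ref{intertwine} at that end is the identity; the cyclic sequence underlying the outgoing end contains $\ul{L}_{0r}$, and the corresponding $\Psi_{\ul{e}_+}$ is the canonical isomorphism of Theorem~\ref{main2} induced by the embedded composition. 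Setting $\Psi:=\Psi_{\ul{e}_+}$, the intertwining identity of Theorem~\ref{intertwine} collapses to $\Phi_{\ul{S}'}=\Psi\circ\Phi_{\ul{S}}$, that is $\Phi(\ul{L}'_{0r'})=\Psi\circ\Phi(\ul{L}_{0r})$ on $\Hom(\ul{L},\ul{L}')$, which is the claim, and $\Psi$ is an isomorphism because the Theorem~\ref{main2} map is.

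The main obstacle is the orientation and grading bookkeeping rather than any new geometric input. One must verify that the shrunk surface $\ul{S}'$ is exactly the functor surface for $\ul{L}'_{0r'}$, including the patch orderings that fix signs and the induced relative spin structure, which (as noted after \eqref{maineq}) shifts when $M_j$ is not spin, and that the degree shift appearing in Theorem~\ref{intertwine} is absorbed into the degree-shift conventions built into the morphism spaces of $\Don^\#$, so that $\Psi$ is grading-preserving. Extra care is needed because $\ul{L}_{0r}$ occurs twice in the cyclic sequence at the outgoing end (once transposed): the identification of $\Psi$ with the Theorem~\ref{main2} isomorphism and the application of Theorem~\ref{intertwine} must account for both occurrences of the composed strip.
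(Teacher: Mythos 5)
Your proposal is correct and follows essentially the same route as the paper: both identify $\Phi(\ul{L}_{0r})$ on morphisms with the relative invariant of the quilted surface of Figure~\ref{funkcomp} (Remark~\ref{rmk funkcomp}), reduce to a single embedded composition in the chain of intermediate admissible sequences, and apply the strip-shrinking Theorem~\ref{intertwine} to obtain $\Psi_{\ul{e}_+}\circ\Phi_{\ul{S}}=\Phi_{\ul{S}'}$, concatenating these isomorphisms (and their inverses) across the chain. Your added remarks on the trivial isomorphism at the incoming end and on the double (transposed) occurrence of the shrunk strip at the outgoing end are sound refinements of the bookkeeping that the paper leaves implicit.
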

\begin{proof}
By assumption there exists a sequence of admissible generalized
Lagrangian correspondences $\ul{L}^{j}$ connecting
$\ul{L}^0=\ul{L}_{0r}$ to $\ul{L}^N=\ul{L}'_{0r'}$.  In each step two
consecutive Lagrangian correspondences $L_-$, $L_+$ in the sequence
$\ul{L}^j=(\ldots,L_-,L_+,\ldots)$ are replaced by their embedded composition 
$L_-\circ L_+$ in $\ul{L}^{j\pm
1}=(\ldots,L_-\circ L_+,\ldots)$.  To each $\ul{L}^j$ we associate
seam conditions for the quilted surface $\ul{S}^j$ on the right of
Figure \ref{funkcomp}. Replacing the consecutive correspondences by
their composition corresponds to shrinking a strip in this surface.
So Theorem \ref{intertwine} provides an isomorphism $\Psi_{\ul{e}^j_+}$
associated to the outgoing end $\ul{e}^j_+$ of each surface
$\ul{S}^j$ such that $\Psi_{\ul{e}^j_+}\circ\Phi_{\ul{S}^j} =
\Phi_{\ul{S}^{j\pm 1}}$.  Figure \ref{funkshrink} shows an example of
this degeneration.  The isomorphism $\Psi$ is given by concatenation of
the isomorphisms $\Psi_{\ul{e}^j_+}$ (and their inverses in case the
composition is between $\ul{L}^j$ and $\ul{L}^{j-1}$). It intertwines
$\Phi_{\ul{S}^0}=\Phi(\ul{L}_{0r})$ and
$\Phi_{\ul{S}^N}=\Phi(\ul{L}'_{0r'})$ as claimed.
\end{proof}

\begin{figure}[ht]
\begin{picture}(0,0)%
\includegraphics{funkshrink.pstex}%
\end{picture}%
\setlength{\unitlength}{2486sp}%
\begingroup\makeatletter\ifx\SetFigFont\undefined%
\gdef\SetFigFont#1#2#3#4#5{%
  \reset@font\fontsize{#1}{#2pt}%
  \fontfamily{#3}\fontseries{#4}\fontshape{#5}%
  \selectfont}%
\fi\endgroup%
\begin{picture}(6710,2814)(2824,-2098)
\put(6076,-826){\makebox(0,0)[lb]{$\underset{\delta\to 0}{\sim}$}}
\put(3646,-1771){\makebox(0,0)[lb]{$\ul{L}'$}}
\put(4296,-1571){\makebox(0,0)[lb]{$M_0$}}
\put(5041,-1771){\makebox(0,0)[lb]{$\ul{L}$}}
\put(7026,-1666){\makebox(0,0)[lb]{$\ul{L}'$}}
\put(7626,-1156){\makebox(0,0)[lb]{$M_0$}}
\put(8271,-1666){\makebox(0,0)[lb]{$\ul{L}$}}
\put(7261,-626){\makebox(0,0)[lb]{$L_{01}\circ L_{12}$}}
\put(7516,-161){\makebox(0,0)[lb]{$M_2$}}
\put(4081,-606){\makebox(0,0)[lb]{$M_1$}}
\put(3741,-200){\makebox(0,0)[lb]{$\delta$}}
\put(4556,-806){\makebox(0,0)[lb]{$L_{01}$}}
\put(4591,-200){\makebox(0,0)[lb]{$L_{12}$}}
\put(4141,150){\makebox(0,0)[lb]{$M_2$}}
\end{picture}%

\caption{Isomorphism between the functors $\Phi(L_{01})\circ\Phi(L_{12})$
and $\Phi(L_{01}\circ L_{12})$}
\label{funkshrink}
\end{figure}

Next, let $\psi: M_0 \to M_1$ be a symplectomorphism and $\graph\psi
\subset M_0^- \times M_1$ its graph.  The functor $\Phi(\psi)$
defined in Section \ref{sym} extends to a functor 
$$ \Phi(\psi): \Don^\#(M_0) \to \Don^\#(M_1) $$
defined on the level of objects by 
$$ \ul{L}=(L_{-r(-r+1)},\ldots,L_{-10}) \mapsto (L_{-r(-r+1)},\ldots,(1_{N_{-1}}
 \times \psi)(L_{-10})) =: \psi(\ul{L}).$$
On the level of morphisms, the functor
$\Phi(\psi):\Hom(\ul{L},\ul{L}') \to
\Hom(\Phi(\psi)(\ul{L}),\Phi(\psi)(\ul{L}'))$ is defined by
$\bra{(x_{-r},\ldots,x_{-1},x_0,x'_{-1}, \ldots, x'_{-r'})} \mapsto
\bra{(x_{-r},\ldots,x_{-1},\psi(x_0),x'_{-1}, \ldots, x'_{-r'}} $ on the generators
$\cI(\ul{L},\ul{L}')$ of the chain complex.  As another application of
Theorem \ref{intertwine} we will show that this functor is in fact
isomorphic to the functor $\Phi(\graph\psi) : \Don^\#(M_0) \to
\Don^\#(M_1)$ that we defined for the Lagrangian correspondence
$\graph\psi$.

\begin{proposition}  \label{sym2}
$\Phi(\psi)$ and $\Phi(\graph\psi)$ are canonically isomorphic as
functors from $\Don^\#(M_0)$ to $\Don^\#(M_1)$. 
More precisely, there exists a canonical natural
transformation $\alpha: \Phi(\psi) \to \Phi(\graph\psi)$, that is 
$\alpha(\ul{L})\in\Hom(\Phi(\psi)(\ul{L}) , \Phi(\graph\psi)(\ul{L}))$ 
for every $\ul{L}\in\Obj(\Don^\#(M_0))$ such that
$\alpha(\ul{L})\circ\Phi(\graph\psi)(f) = \Phi(\psi)(f) \circ
\alpha(\ul{L}')$ for all $f\in\Hom(\ul{L},\ul{L}')$, and all
$\alpha(\ul{L})$ are isomorphisms in $\Don^\#(M_1)$.
\end{proposition}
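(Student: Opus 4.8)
The plan is to recognise that for each object the two images differ by a single \emph{embedded} geometric composition, to build $\alpha(\ul{L})$ from the strip-shrinking isomorphism of Theorem~\ref{main2}, and to deduce naturality from Theorem~\ref{intertwine}. First I would observe that for $\ul{L}=(L_{-r(-r+1)},\ldots,L_{-10})$ the object $\Phi(\psi)(\ul{L})$ ends in $(1_{N_{-1}}\times\psi)(L_{-10})\subset N_{-1}^-\times M_1$, while $\Phi(\graph\psi)(\ul{L})$ ends in the pair $(L_{-10},\graph\psi)$ with extra intermediate manifold $M_0$. Since composition with the graph of a symplectomorphism is automatically transverse and injective, one has the embedded composition $L_{-10}\circ\graph\psi=(1_{N_{-1}}\times\psi)(L_{-10})$ in the sense of Definition~\ref{embedded}, and admissibility, grading, and relative spin structure are preserved because $\psi$ is a graded symplectomorphism. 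Hence the two objects are related by exactly one embedded composition of their final two correspondences.

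Given this, I would define $\alpha(\ul{L})\in\Hom(\Phi(\psi)(\ul{L}),\Phi(\graph\psi)(\ul{L}))$ to be the image of the unit $1_{\Phi(\psi)(\ul{L})}$ under the canonical isomorphism
\[
HF\bigl(\Phi(\psi)(\ul{L}),\Phi(\psi)(\ul{L})\bigr)\overset{\sim}{\longrightarrow}HF\bigl(\Phi(\psi)(\ul{L}),\Phi(\graph\psi)(\ul{L})\bigr)
\]
of Theorem~\ref{main2}, which un-composes $(1_{N_{-1}}\times\psi)(L_{-10})$ back into $(L_{-10},\graph\psi)$ in the second argument; concretely it is the class of the distinguished cocycle furnished by the canonical identification of intersection points. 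Writing $2n=\dim M_0$, the degree shift $\hh\sum_k\dim N_k$ built into $\Hom$ in $\Don^\#(M_1)$ differs between the two objects by $\hh\dim M_0=n$, which compensates exactly the shift $[n\,d]$ produced by Theorem~\ref{intertwine} and makes $\alpha(\ul{L})$ a degree-zero morphism. Its inverse is the class obtained by the reverse strip-shrinking, and the two composites are the respective identities because the isomorphisms of Theorem~\ref{main2} are compatible with the quilted pair-of-pants product of Figure~\ref{fancypants}, this compatibility being itself a special case of Theorem~\ref{intertwine} applied to the pants.

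For naturality I would verify $\alpha(\ul{L})\circ\Phi(\graph\psi)(f)=\Phi(\psi)(f)\circ\alpha(\ul{L}')$ by applying Theorem~\ref{intertwine} to the quilted surface $\ul{S}$ of Figure~\ref{morphism} defining $\Phi(\graph\psi)$ on morphisms, whose interior circle carries $\graph\psi$ and which contains a patch mapping to $M_0$ bounded by $\graph\psi$ on one side and by the final correspondences $L_{-10},L'_{-10}$ on the other. Shrinking this $M_0$-patch is an embedded composition by the first step, so Theorem~\ref{intertwine} gives
\[
\Phi_{\ul{S}'}\circ\Psi_{\ul{e}_-}=\Psi_{\ul{e}_+}\circ\Phi_{\ul{S}}\,[n\,d],
\]
where $\Phi_{\ul{S}}=\Phi(\graph\psi)(f)$ and $\Psi_{\ul{e}_\pm}$ are the strip-shrinking isomorphisms at the two ends. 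The shrunk surface $\ul{S}'$ is the trivial quilted strip for the composed objects, whose relative invariant is by inspection the $\psi$-relabelling $\Phi(\psi)(f)$. Identifying the end isomorphisms $\Psi_{\ul{e}_\pm}$ with the components $\alpha(\ul{L}),\alpha(\ul{L}')$ — which by construction are themselves the strip-shrinking images of the units — then rearranges the displayed identity into the naturality square, with the shift $[n\,d]$ absorbed by the shift built into the morphism spaces.

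The main obstacle will be this last identification: proving that the strip-shrinking isomorphism $\Psi_{\ul{e}}$ attached by Theorem~\ref{intertwine} to an end literally equals pre- or post-composition (as dictated by the orientation of that end) with the distinguished element $\alpha$. This is a gluing argument — one glues the quilted cup computing $\alpha(\ul{L})$ onto the collapsing end and identifies the result with the pair of pants defining categorical composition in $\Don^\#(M_1)$ — together with the careful bookkeeping of the degree shift $[n\,d]$ and of the orientation and relative-spin signs of Theorem~\ref{intertwine}, which must be checked to cancel against the shift $\hh\dim M_0=n$ carried by the enlarged manifold sequence of $\Phi(\graph\psi)(\ul{L})$.
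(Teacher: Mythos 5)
Your proposal follows essentially the same route as the paper's proof: you define $\alpha(\ul{L})$ as the image of the identity under the strip-shrinking isomorphism of Theorem~\ref{main2} (using that $L_{(-1)0}\circ\graph\psi=(1\times\psi)(L_{(-1)0})$ is automatically embedded), and you establish invertibility and the naturality square by the same combination of Theorem~\ref{intertwine} and the quilted gluing theorem, including the key step --- which the paper carries out via the degenerations of Figures~\ref{murks2} and~\ref{graph} --- of identifying the end isomorphisms $\Psi_{\ul{e}}$ with pre-/post-composition by $\alpha$. The only cosmetic difference is that you exhibit an explicit inverse by reverse shrinking where the paper instead observes that composition with $\alpha(\ul{L})$ coincides with the bijections $I_1,I_2$; both rest on the same gluing argument.
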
 

\begin{proof}  
Consider a generalized Lagrangian submanifold
$\ul{L}=(L_{(-r)(-r+1)},\ldots,L_{(-1)0}) \in \Obj(\Don^\#(M_0))$.
By Theorem \ref{main2} we have canonical isomorphisms from
\begin{multline*} 
\Hom(\Phi(\psi)\ul{L}, \Phi(\graph\psi)\ul{L}) = \Hom(\psi(\ul{L}),(\ul{L},\graph\psi)) \\
=\Hom(\ldots (1\times\psi)(L_{(-1)0}),(\graph\psi)^t,(L_{(-1)0})^t \ldots)
\end{multline*}
to all three of
\begin{align*}
& \Hom(\ldots (1\times\psi)(L_{(-1)0}),(L_{(-1)0}\circ(\graph\psi))^t \ldots)
= \Hom(\psi(\ul{L}),\psi(\ul{L})), \\
&\Hom(\ldots L_{(-1)0},\graph\psi,(\graph\psi)^t,(L_{(-1)0})^t \ldots)
= \Hom((\ul{L},\graph\psi)(\ul{L},\graph\psi)), \\
&\Hom(\ldots (1\times\psi)(L_{(-1)0})\circ\graph(\psi^{-1}),(L_{(-1)0})^t \ldots) 
= \Hom(\ul{L},\ul{L}) ,
\end{align*}
see Figure \ref{murks}.\footnote{
Strictly speaking, one has to apply the shift functor $\Psi_{M_0}$ of
Definition~\ref{dfn shift} to
adjust the relative spin structure on $\ul{L}$. However, 
$HF(\Psi_{M_0}(\ul{L}), \Psi_{M_0}(\ul{L}))$ is canonically
isomorphic to $HF(\ul{L},\ul{L})$.
}
The isomorphisms are by $(\psi(\ul{x}),\ul{x})\mapsto \psi(\ul{x})$,
$(\ul{x},\psi(x_0),\ul{x})$, or $\ul{x}$, respectively, on the level
of perturbed intersection points
$\ul{x}=(x_{-r},\ldots,x_0)\in\cI(\ul{L},\ul{L})$.  The first two
isomorphisms also intertwine the identity morphisms $1_{\psi(\ul{L})}
\cong 1_{(\ul{L},\graph\psi)}$ by Theorem \ref{main2} and the
degeneration of the quilted identity indicated in Figure \ref{murks};
this is the identity axiom for the functor $\Phi(\graph\psi)$.  The
identity axiom for $\Phi(\psi)$ implies that the above isomorphisms
(their composition which coincides with
$\Phi(\psi):\Hom(\ul{L},\ul{L}) \to \Hom(\psi(\ul{L}),\psi(\ul{L}))$)
also intertwine $1_{\ul{L}}$ with $1_{\psi(\ul{L})}$.  We define
$\alpha(\ul{L})\in \Hom(\Phi(\psi)\ul{L}, \Phi(\graph(\psi))\ul{L})$
to be the element corresponding to the identities
$1_{\Phi(\psi)(\ul{L})}\cong 1_{\Phi(\graph\psi)(\ul{L})} \cong
1_{\ul{L}}$ under these isomorphisms.

\begin{figure}[ht]
\begin{picture}(0,0)%
\includegraphics{k_murks.pstex}%
\end{picture}%
\setlength{\unitlength}{3729sp}%
\begingroup\makeatletter\ifx\SetFigFont\undefined%
\gdef\SetFigFont#1#2#3#4#5{%
  \reset@font\fontsize{#1}{#2pt}%
  \fontfamily{#3}\fontseries{#4}\fontshape{#5}%
  \selectfont}%
\fi\endgroup%
\begin{picture}(6087,3669)(226,-3268)
\put(1666,-2791){\makebox(0,0)[lb]{$\delta=\delta_1=\delta_3\to 0$}}
\put(4276,-3031){\makebox(0,0)[lb]{\smash{{\SetFigFont{11}{13.2}{\familydefault}{\mddefault}{\updefault}$\delta$}}}}
\put(721,-1901){\rotatebox{90.0}{\makebox(0,0)[lb]{\smash{{\SetFigFont{11}{13.2}{\familydefault}{\mddefault}{\updefault}$\in$}}}}}
\put(2251,-1901){\rotatebox{90.0}{\makebox(0,0)[lb]{\smash{{\SetFigFont{11}{13.2}{\familydefault}{\mddefault}{\updefault}$\in$}}}}}
\put(4321,-1901){\rotatebox{90.0}{\makebox(0,0)[lb]{\smash{{\SetFigFont{11}{13.2}{\familydefault}{\mddefault}{\updefault}$\in$}}}}}
\put(26,-1600){\makebox(0,0)[lb]{\smash{{\SetFigFont{9}{13.2}{\familydefault}{\mddefault}{\updefault}$HF(\psi(\ul{L}),\psi(\ul{L}))\cong$}}}}
\put(5761,-1600){\makebox(0,0)[lb]{\smash{{\SetFigFont{9}{13.2}{\familydefault}{\mddefault}{\updefault}$HF(\ul{L},\ul{L})$}}}}
\put(1401,-1600){\makebox(0,0)[lb]{\smash{{\SetFigFont{9}{13.2}{\familydefault}{\mddefault}{\updefault}$HF(\psi(\ul{L}),(\ul{L},\graph\psi))\;\cong$}}}}
\put(3341,-1600){\makebox(0,0)[lb]{\smash{{\SetFigFont{9}{13.2}{\familydefault}{\mddefault}{\updefault}$HF((\ul{L},\graph\psi),(\ul{L},\graph\psi))\,\cong$}}}}
\put(4051,-2150){\makebox(0,0)[lb]{\smash{{\SetFigFont{11}{13.2}{\familydefault}{\mddefault}{\updefault}$1_{(\ul{L},\graph\psi)}$}}}}
\put(5941,-2150){\makebox(0,0)[lb]{\smash{{\SetFigFont{11}{13.2}{\familydefault}{\mddefault}{\updefault}$1_{\ul{L}}$}}}}
\put(541,-2150){\makebox(0,0)[lb]{\smash{{\SetFigFont{11}{13.2}{\familydefault}{\mddefault}{\updefault}$1_{\psi(\ul{L})}$}}}}
\put(1026,-556){\makebox(0,0)[lb]{\smash{{\SetFigFont{11}{13.2}{\familydefault}{\mddefault}{\updefault}$\delta_1\to 0$}}}}
\put(2826,-556){\makebox(0,0)[lb]{\smash{{\SetFigFont{11}{13.2}{\familydefault}{\mddefault}{\updefault}$\delta_3\to 0$}}}}
\put(5076,-556){\makebox(0,0)[lb]{\smash{{\SetFigFont{11}{13.2}{\familydefault}{\mddefault}{\updefault}$\delta_2\to 0$}}}}
\put(1971,-1051){\makebox(0,0)[lb]{\smash{{\SetFigFont{11}{13.2}{\familydefault}{\mddefault}{\updefault}$\delta_1$}}}}
\put(2401,-1051){\makebox(0,0)[lb]{\smash{{\SetFigFont{11}{13.2}{\familydefault}{\mddefault}{\updefault}$\delta_2$}}}}
\put(4681,-1051){\makebox(0,0)[lb]{\smash{{\SetFigFont{11}{13.2}{\familydefault}{\mddefault}{\updefault}$\delta_3$}}}}
\put(4231,-1051){\makebox(0,0)[lb]{\smash{{\SetFigFont{11}{13.2}{\familydefault}{\mddefault}{\updefault}$\delta_2$}}}}
\put(3781,-1051){\makebox(0,0)[lb]{\smash{{\SetFigFont{11}{13.2}{\familydefault}{\mddefault}{\updefault}$\delta_1$}}}}
\put(6121,-1901){\rotatebox{90.0}{\makebox(0,0)[lb]{\smash{{\SetFigFont{11}{13.2}{\familydefault}{\mddefault}{\updefault}$\in$}}}}}
\put(2056,-2150){\makebox(0,0)[lb]{\smash{{\SetFigFont{11}{13.2}{\familydefault}{\mddefault}{\updefault}$\mathbf{\alpha(\ul{L})}$}}}}
\put(100,0){\makebox(0,0)[lb]{\smash{{\SetFigFont{9}{13.2}{\familydefault}{\mddefault}{\updefault}$\psi(\ul{L})$}}}}
\put(926,0){\makebox(0,0)[lb]{\smash{{\SetFigFont{9}{13.2}{\familydefault}{\mddefault}{\updefault}$\psi(\ul{L})$}}}}
\put(5700,0){\makebox(0,0)[lb]{\smash{{\SetFigFont{9}{13.2}{\familydefault}{\mddefault}{\updefault}$\ul{L}$}}}}
\put(6340,0){\makebox(0,0)[lb]{\smash{{\SetFigFont{9}{13.2}{\familydefault}{\mddefault}{\updefault}$\ul{L}$}}}}
\put(1651,0){\makebox(0,0)[lb]{\smash{{\SetFigFont{9}{13.2}{\familydefault}{\mddefault}{\updefault}$\ul{L}$}}}}
\put(2201,0){\makebox(0,0)[lb]{\smash{{\SetFigFont{9}{13.2}{\familydefault}{\mddefault}{\updefault}$\psi$}}}}
\put(2761,0){\makebox(0,0)[lb]{\smash{{\SetFigFont{9}{13.2}{\familydefault}{\mddefault}{\updefault}$\psi(\ul{L})$}}}}
\put(3441,0){\makebox(0,0)[lb]{\smash{{\SetFigFont{9}{13.2}{\familydefault}{\mddefault}{\updefault}$\ul{L}$}}}}
\put(4001,0){\makebox(0,0)[lb]{\smash{{\SetFigFont{9}{13.2}{\familydefault}{\mddefault}{\updefault}$\psi$}}}}
\put(4451,0){\makebox(0,0)[lb]{\smash{{\SetFigFont{9}{13.2}{\familydefault}{\mddefault}{\updefault}$\psi$}}}}
\put(5001,0){\makebox(0,0)[lb]{\smash{{\SetFigFont{9}{13.2}{\familydefault}{\mddefault}{\updefault}$\ul{L}$}}}}
\end{picture}%
\caption{Natural isomorphisms of Floer cohomology groups and definition of the natural
transformation $\alpha$ : The light and dark shaded surfaces are mapped to $M_0$ and
$M_1$ respectively and we abbreviate $\graph\psi$ by $\psi$
and $\Phi(\psi)(\ul{L})$ by $\psi(\ul{L})$.}
\label{murks}
\end{figure}

Each $\alpha(\ul{L})$ is an isomorphism since $\alpha(\ul{L})\circ f = I_1(f)$
for all $f\in HF(\Phi(\graph\psi)\ul{L},\ul{L}'')$ and
$f\circ\alpha(\ul{L})=I_2(f)$ for all  $f\in HF(\ul{L}'',\Phi(\psi)\ul{L})$,
with the isomorphisms from Theorem \ref{main2}
\begin{align*}
I_1 &:\, HF((\ul{L},\graph\psi),\ul{L}'')  \to HF(\psi(\ul{L}),\ul{L}'') , \\
I_2 &:\, HF(\ul{L}'',\psi(\ul{L})) \to HF(\ul{L}'',(\ul{L},\graph\psi)) .
\end{align*}
These identities can be seen from the gluing theorem in \cite{quilts} and Theorem~\ref{main2}, applied to the gluings and
degenerations indicated in Figure \ref{murks2}. The quilted surfaces
can be deformed to a strip resp.\ a quilted strip (which corresponds
to a strip in $M_0^-\times M_1$).  These relative invariants both are
the identity since the solutions are counted without quotienting by
$\R$, see the strip example \cite[Example 2.5]{quilts}.
\begin{figure}[ht]
\begin{picture}(0,0)%
\includegraphics{murks2.pstex}%
\end{picture}%
\setlength{\unitlength}{2763sp}%
\begingroup\makeatletter\ifx\SetFigFont\undefined%
\gdef\SetFigFont#1#2#3#4#5{%
  \reset@font\fontsize{#1}{#2pt}%
  \fontfamily{#3}\fontseries{#4}\fontshape{#5}%
  \selectfont}%
\fi\endgroup%
\begin{picture}(9921,2624)(1339,-9423)
\put(3126,-7261){\makebox(0,0)[rb]{$\psi(\ul{L})$}}
\put(1656,-7261){\makebox(0,0)[rb]{$\ul{L}''$}}
\put(2200,-8061){\makebox(0,0)[rb]{$\psi$}}
\put(2201,-8611){\makebox(0,0)[rb]{$\ul{L}$}}
\put(1661,-9471){\makebox(0,0)[rb]{$f$}}
\put(3026,-9471){\makebox(0,0)[rb]{$\alpha(\ul{L})$}}
\put(3901,-9471){\makebox(0,0)[rb]{$f$}}
\put(3900,-7261){\makebox(0,0)[rb]{$\ul{L}''$}}
\put(4490,-8081){\makebox(0,0)[rb]{$\psi$}}
\put(5151,-7261){\makebox(0,0)[rb]{$\ul{L}$}}
\put(5400,-8011){\makebox(0,0)[lb]{$= f$}}
\put(10551,-7261){\makebox(0,0)[rb]{$\ul{L}''$}}
\put(8301,-7261){\makebox(0,0)[rb]{$\ul{L}''$}}
\put(8176,-9396){\makebox(0,0)[rb]{$f$}}
\put(7251,-9396){\makebox(0,0)[rb]{$\alpha(\ul{L})$}}
\put(7281,-7861){\makebox(0,0)[rb]{$\psi$}}
\put(6901,-7261){\makebox(0,0)[rb]{$\ul{L}$}}
\put(9571,-7261){\makebox(0,0)[rb]{$\psi(\ul{L})$}}
\put(7800,-8461){\makebox(0,0)[rb]{\smash{{\SetFigFont{9}{9.6}{\familydefault}{\mddefault}{\updefault}$\psi(\ul{L})$}}}}
\put(10676,-7861){\makebox(0,0)[lb]{$=f$}}
\put(10426,-9396){\makebox(0,0)[rb]{$f$}}
\put(8676,-7861){\makebox(0,0)[lb]{$\sim$}}
\put(3101,-7936){\makebox(0,0)[lb]{$\sim$}}
\put(8716,-8061){\makebox(0,0)[lb]\smash{\SetFigFont{9}{9.6}{\familydefault}{\mddefault}{\updefault}$I_2$}}
\put(3151,-8136){\makebox(0,0)[lb]\smash{\SetFigFont{9}{9.6}{\familydefault}{\mddefault}{\updefault}$I_1$}}\end{picture}%
\caption{$\alpha(\ul{L})$ is an isomorphism in $\Don^\#(M_1)$}
\label{murks2}
\end{figure}
For $f\in\Hom(\ul{L},\ul{L}')$ this already shows the first equality in
$ \Phi(\psi)(f) \circ \alpha(\ul{L}') = I(f) = \alpha(\ul{L})\circ\Phi(\graph\psi)(f)$ 
with the isomorphism $I:HF(\ul{L},\ul{L}') \to HF(\psi(\ul{L}),(\ul{L}',\graph\psi))$.
More precisely, on the chain level for $x\in\cI(\ul{L},\ul{L}')$
$$
\Phi(\psi)(x) \circ \alpha(\ul{L}') = (\psi(x),x) 
=\alpha(\ul{L})\circ\Phi(\graph\psi)(x).
$$ 
The second identity is proven by repeatedly using Theorem \ref{intertwine} 
and the quilted gluing theorem \cite[Theorem 3.13]{quilts}, see Figure \ref{graph}.
\end{proof}
\begin{figure}[ht]
\begin{picture}(0,0)%
\includegraphics{k_psi.pstex}%
\end{picture}%
\setlength{\unitlength}{3158sp}%
\begingroup\makeatletter\ifx\SetFigFont\undefined%
\gdef\SetFigFont#1#2#3#4#5{%
  \reset@font\fontsize{#1}{#2pt}%
  \fontfamily{#3}\fontseries{#4}\fontshape{#5}%
  \selectfont}%
\fi\endgroup%
\begin{picture}(7512,3136)(2989,-9410)
\put(3676,-9361){\makebox(0,0)[rb]{$x$}}
\put(3826,-7336){\makebox(0,0)[rb]{$\psi$}}
\put(4206,-8086){\makebox(0,0)[rb]{$\ul{L}$}}
\put(4976,-6811){\makebox(0,0)[rb]{$\psi(\ul{L})$}}
\put(3456,-6811){\makebox(0,0)[rb]{$\ul{L}'$}}
\put(6101,-6811){\makebox(0,0)[rb]{$\ul{L}'$}}
\put(7551,-6811){\makebox(0,0)[rb]{$\psi(\ul{L})$}}
\put(6746,-7261){\makebox(0,0)[rb]{$\psi$}}
\put(6571,-8161){\makebox(0,0)[rb]{$\ul{L}$}}
\put(5406,-7636){\makebox(0,0)[rb]{$=$}}
\put(4956,-8861){\makebox(0,0)[rb]{$\alpha(\ul{L})$}}
\put(7306,-8861){\makebox(0,0)[rb]{$\alpha(\ul{L})$}}
\put(8026,-7636){\makebox(0,0)[rb]{$\sim$}}
\put(7906,-7836){\makebox(0,0)[lb]\smash{\SetFigFont{9}{9.6}{\familydefault}{\mddefault}{\updefault}$I$}}
\put(6101,-9361){\makebox(0,0)[rb]{$x$}}
\put(8876,-9361){\makebox(0,0)[rb]{$x$}}
\put(8856,-6811){\makebox(0,0)[rb]{$\ul{L}'$}}
\put(9701,-6811){\makebox(0,0)[rb]{$\ul{L}$}}
\put(10501,-7636){\makebox(0,0)[rb]{$=x$}}
\end{picture}%
\caption{Isomorphism of functors for a symplectomorphism and its
graph, using shrinking strips}
\label{graph}
\end{figure}

\begin{remark} 
There is an analytically easier proof of the previous Proposition
\ref{sym2} since it deals only with the special case when one of the Lagrangian correspondences is the graph of a symplectomorphism: Instead of shrinking a strip as in
Theorems~\ref{main2} and Theorem ~\ref{intertwine} one can apply the
symplectomorphism to the whole strip; for a suitable choice of
perturbation data it then attaches smoothly to the other surface in
the quilt, and the seam can be removed.
\end{remark} 

The functor $\Phi({\rm Id_{M_0}})$ associated to the identity map on
$M_0$ clearly is the identity functor on $\Don^\#(M_0)$. So
Proposition~\ref{sym2} gives a (rather indirect) isomorphism between
the functor for the diagonal and the identity functor. To be more
precise, taking into account the relative spin structure of the
diagonal, we need to introduce the following shift functor.

\begin{definition} \label{dfn shift}
We define a shift functor 
$$
\Psi_{M_0}: \ \Don^\#(M_0,\Lag^N(M_0),\omega_0,b_0) 
\to \Don^\#(M_0,\Lag^N(M_0),\omega_0,b_0-w_2(M_0)) .
$$
\begin{enumerate}
\item
On the level of objects, $\Psi_{M_0}$ maps every generalized Lagrangian
$\ul{L}\in\Don^\#(M_0)$ to itself but shifts the relative spin structure 
to one with background class $b_0-w_2(M_0)$, as explained in 
\cite{orient}.
\item
On the level of morphisms, 
$\Psi_{M_0}:\Hom(\ul{L},\ul{L}')\to \Hom(\Psi_{M_0}(\ul{L}),\Psi_{M_0}(\ul{L}'))$
is the canonical isomorphism for shifted spin structures from 
\cite{orient}.
\end{enumerate}
\end{definition}

\begin{remark} \label{rmk diag}
Let $\Delta\subset M_0^-\times M_0$ denote the diagonal.  Throughout,
we will equip $\Delta$ with the orientation and relative spin
structure that are induced by the projection to the second factor (see
\cite{orient}).
Then $\Delta$ is an admissible Lagrangian correspondence from $M_0$ to
$M_1$, where $M_1=M_0$ with the same symplectic structure
$\omega_1=\omega_0$ and Maslov cover $\Lag^N(M_1)=\Lag^N(M_0)$, but
with a shifted background class $b_1=b_0-w_2(M_0)$.  In other words,
$\Delta$ is an object in the category $\Don^\#\bigl(M_0,M_1)$ that is
introduced in Section~\ref{corcat} below.
\end{remark}

In the following, we will drop the Maslov cover and symplectic form from the notation.

\begin{corollary} \label{phidelta}
The functor $\Phi(\Delta):\Don^\#(M_0,b_0)\to\Don^\#(M_0,b_0-w_2(M_0))$
associated to the diagonal is canonically isomorphic to the shift functor $\Psi_{M_0}$.
\end{corollary}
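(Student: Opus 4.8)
The plan is to deduce this entirely from Proposition~\ref{sym2}, specialized to the identity symplectomorphism $\psi = \mathrm{Id}_{M_0}$, whose graph is precisely the diagonal $\graph(\mathrm{Id}_{M_0}) = \Delta$. First I would record what the two sides of Proposition~\ref{sym2} become in this case. On the one hand, the directly-defined functor $\Phi(\mathrm{Id}_{M_0})$ of Section~\ref{sym} is, as already noted in the text preceding the corollary, literally the identity functor on $\Don^\#(M_0)$: on objects it sends $\ul{L} = (L_{(-r)(-r+1)},\ldots,L_{(-1)0})$ to $(L_{(-r)(-r+1)},\ldots,(1\times\mathrm{Id})(L_{(-1)0})) = \ul{L}$, and on morphisms it acts by $\bra{\ul{x}} \mapsto \bra{\ul{x}}$ on generators. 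On the other hand, $\Phi(\graph(\mathrm{Id}_{M_0})) = \Phi(\Delta)$ is the Lagrangian-correspondence functor for the diagonal. Proposition~\ref{sym2} then supplies a canonical natural isomorphism $\alpha : \Phi(\mathrm{Id}_{M_0}) \to \Phi(\Delta)$, built from the geometric-composition isomorphisms of Theorem~\ref{main2} applied to the strip-shrinking degenerations of Figure~\ref{murks}.

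Next I would account for the discrepancy in background classes, which is the only reason the isomorphism is not simply with the identity functor. As recorded in Remark~\ref{rmk diag}, the diagonal $\Delta$ is equipped with the orientation and relative spin structure induced by the projection to the second factor, and this forces the target background class of $\Phi(\Delta)$ to be $b_0 - w_2(M_0)$; thus $\Phi(\Delta)$ maps $\Don^\#(M_0, b_0)$ to $\Don^\#(M_0, b_0 - w_2(M_0))$, whereas $\Phi(\mathrm{Id}_{M_0})$ preserves $b_0$. These source and target categories are related exactly by the shift functor $\Psi_{M_0}$ of Definition~\ref{dfn shift}. Since $\Psi_{M_0}$ is the identity on underlying generalized Lagrangians and $\Phi(\mathrm{Id}_{M_0})$ is the identity functor, we have $\Psi_{M_0} \circ \Phi(\mathrm{Id}_{M_0}) = \Psi_{M_0}$; inserting this shift into $\alpha$ upgrades it to a canonical natural isomorphism $\Psi_{M_0} \cong \Phi(\Delta)$, which is the assertion.

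The only genuine subtlety — and the step I would check most carefully — is the relative spin structure bookkeeping flagged in the footnote to Proposition~\ref{sym2}. One must verify that the canonical isomorphisms of Theorem~\ref{main2} out of which $\alpha$ is assembled do in fact coincide, after the $w_2(M_0)$ shift, with the canonical isomorphism of morphism spaces that defines $\Psi_{M_0}$ on morphisms (the comparison of shifted spin structures from \cite{orient}). This is not a new analytic input but a matching of orientation conventions: geometric composition with $\Delta$ acts as the identity on the underlying generalized Lagrangian while implementing precisely the background-class shift by $w_2(M_0)$, and I expect the required identification to follow from the construction of the shift isomorphism in \cite{orient} together with this fact. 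Everything else is a direct transcription of Proposition~\ref{sym2} with $\psi = \mathrm{Id}_{M_0}$.
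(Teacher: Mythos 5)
Your proposal is correct and follows exactly the route the paper takes: the corollary is deduced by specializing Proposition~\ref{sym2} to $\psi=\mathrm{Id}_{M_0}$ (so that $\graph\psi=\Delta$), observing that $\Phi(\mathrm{Id}_{M_0})$ is literally the identity functor, and letting the shift functor $\Psi_{M_0}$ absorb the change of background class from $b_0$ to $b_0-w_2(M_0)$ recorded in Remark~\ref{rmk diag}. The spin-structure bookkeeping you flag is precisely the point handled by the footnote to Proposition~\ref{sym2} and deferred to \cite{orient}, so nothing further is needed.
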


\section{Composition functor for categories of correspondences}
\label{corcat}

The set of generalized Lagrangian correspondences forms a category in
its own right, which we define in close analogy to the generalized Donaldson
category in Section \ref{Don}. We will then be able to define a composition
functor for these categories.

Let $M_a$ and $M_b$ be symplectic manifolds satisfying (M1-2) with
the same monotonicity constant $\tau\geq 0$.  We fix an integer $N > 0$,
$N$-fold Maslov covers $\Lag^N(M_{(\cdot)}) \to M_{(\cdot)}$ as in (G1), and background
classes $b_{(\cdot)} \in H^2(M_{(\cdot)},\Z_2)$.
Recall from  Definition~\ref{Lag cor} that a generalized Lagrangian 
correspondence from $M_a$ to $M_b$ is a sequence 
$\ul{L}=(L_{01},L_{12},\ldots,L_{(r-1)r})$ of 
Lagrangian correspondences $L_{(i-1)i}\subset N_{i-1}^- \times N_i$ 
for a sequence $N_{0},\ldots,N_{r}$ of any length $r\geq 0$ of
symplectic manifolds with $N_{0}=M_a$ and $N_{r} = M_b$.  
We picture $\ul{L}$ as sequence
$$ \begin{diagram} \node{M_a=N_0} \arrow{e,t}{L_{01}}
\node{N_1} \arrow{e,t}{L_{12}} 
\node{\ldots} \arrow{e,t}{L_{(r-1)r}} \node{N_r = M_b}
\end{diagram} .$$
As in Definition~\ref{def:gen adm lag} we call a generalized Lagrangian correspondence $\ul{L}$ from $M_a$ to $M_b$ {\em admissible} if each $N_i$ satisfies (M1-2) with the
monotonicity constant $\tau\geq 0$, each $L_{(i-1)i}$ satisfies (L1-3), and
the image of each $\pi_1(L_{(i-1)i})$ in $\pi_1(N_{i-1}^- \times N_i)$ is torsion.


\begin{definition} \label{dfn corcat}
The {\em Donaldson-Fukaya category of correspondences}
$$\Don^\#(M_a,M_b) := \Don^\#(M_a,M_b,\Lag^N(M_a),\Lag^N(M_b),\omega_a,\omega_b,b_a,b_b) $$
is defined as follows:
\begin{enumerate}
\item 
The objects of $\Don^\#(M_a,M_b)$ are admissible generalized
Lagrangian correspondences from $M_a$ to $M_b$, equipped with
orientations, gradings, and relative spin structures.\footnote{ In the previous notation, a
grading on $\ul{L}$ is a collection of $N$-fold Maslov covers
$\Lag^N(N_j) \to N_j$ for $j = 0,\ldots,r$ and gradings of the
Lagrangian correspondences $L_{(j-1)j}$.  Here the gradings on
$N_0=M_a$ and $N_r=M_b$ are the fixed ones.  A relative spin structure
on $\ul{L}$ is a collection of background classes $b_j \in
H^2(N_j,\Z_2)$ for $j = 0,\ldots,r$ and relative spin structures on
$L_{(j-1)j}$ with background classes $-\pi_{j-1}^* b_{j-1} + \pi_{j}^*
b_{j}$.  Here $b_0=b_a$ and $b_r=b_b$ are the fixed background classes
in $M_a$ and $M_b$. See \cite{quiltfloer} for more details. }
\item
The morphism spaces of $\Don^\#(M_a,M_b)$ are the $\Z_N$-graded Floer
cohomology groups (defined in \cite{quiltfloer})
%
\begin{align*} 
\Hom(\ul{L},\ul{L}') &:= HF(\ul{L},\ul{L}') [d],
\end{align*}
where the second group is shifted by $d = \hh ( \sum_k \dim(N_k) +
\sum_{k'} \dim(N'_{k'})) $.  
For $\Z$-coefficients one has to introduce determinant lines as in 
Remark \ref{zmod}.
See Figure \ref{corhom} for views of the quilted holomorphic cylinders 
which are counted (modulo $\R$-shift) as Floer
trajectories.
\item
The composition of morphisms in $\Don^\#(M_a,M_b)$,
\begin{align*} 
\Hom(\ul{L},\ul{L}') \times  \Hom(\ul{L}',\ul{L}'') &\longrightarrow \;\;\Hom(\ul{L},\ul{L}'') \\ 
 (f,g)\qquad\qquad\quad &\longmapsto  f\circ g:= \Phi_{\ul{P}}(f\otimes g) 
\end{align*}
is defined by the relative invariant $\Phi_{\ul{P}}$ associated to the
quilted pair of pants surface $\ul{P}$ (this time the pair of pants is
an honest one, not just the front) in Figure \ref{quiltpants2}, where
the patches without outgoing ends are ordered from $M_a$ to $M_b$.
\end{enumerate}
\end{definition}

\begin{figure}[ht]
\begin{picture}(0,0)%
\includegraphics{corhom.pstex}%
\end{picture}%
\setlength{\unitlength}{2693sp}%
\begingroup\makeatletter\ifx\SetFigFont\undefined%
\gdef\SetFigFont#1#2#3#4#5{%
  \reset@font\fontsize{#1}{#2pt}%
  \fontfamily{#3}\fontseries{#4}\fontshape{#5}%
  \selectfont}%
\fi\endgroup%
\begin{picture}(9429,3399)(7950,-3850)
\put(8201,-1511){\makebox(0,0)[lb]{$\ul{L}'$}}
\put(8861,-2176){\makebox(0,0)[lb]{$\ul{L}$}}
\put(8281,-2761){\makebox(0,0)[lb]{$M_b$}}
\put(9006,-2701){\makebox(0,0)[lb]{$M_a$}}
\put(11441,-3356){\makebox(0,0)[lb]{$M_b$}}
\put(11441,-1771){\makebox(0,0)[lb]{$M_a$}}
\put(12341,-2600){\makebox(0,0)[lb]{$\ul{L}$}}
\put(10676,-2600){\makebox(0,0)[lb]{$\ul{L}'$}}
\put(11590,-2530){\makebox(0,0)[lb]{$\otimes$}}
\put(15301,-3456){\makebox(0,0)[lb]{$M_b$}}
\put(14851,-1771){\makebox(0,0)[lb]{$M_a$}}
\put(13321,-2536){\makebox(0,0)[lb]{$=$}}
\put(9811,-2536){\makebox(0,0)[lb]{$=$}}
\put(14086,-2600){\makebox(0,0)[lb]{$\mathbf{L_{12}'}$}}
\put(14086,-2150){\makebox(0,0)[lb]{$\mathbf{L_{10}'}$}}
\put(14201,-3321){\makebox(0,0)[lb]{$\mathbf{L_{(r'-1)r'}'}$}}
\put(15751,-2936){\makebox(0,0)[lb]{$\mathbf{L_{(r-1)r}}$}}
\put(15751,-2100){\makebox(0,0)[lb]{$\mathbf{L_{01}}$}}
\put(14226,-2851){\makebox(0,0)[lb]{$\mathbf{\vdots}$}}
\put(15841,-2366){\makebox(0,0)[lb]{$\mathbf{\vdots}$}}
\put(15751,-2586){\makebox(0,0)[lb]{$\mathbf{L_{12}}$}}
\put(15115,-2530){\makebox(0,0)[lb]{$\otimes$}}
\end{picture}
\caption{Floer trajectories for pairs of generalized Lagrangian correspondences} 
\label{corhom}
\end{figure}

\begin{convention}
In Figure~\ref{corhom} and 
the following pictures, the outer circles
will always be outgoing ends. The inner circles are usually incoming
ends, indicated by a $\otimes$ or marked with the incoming morphism.
Ends at the top resp.\ bottom of pictures will always be outgoing
resp.\ incoming, unless otherwise indicated by arrows.
\end{convention}

\begin{figure} 
\begin{picture}(0,0)%
\includegraphics{comp_mor.pstex}%
\end{picture}%
\setlength{\unitlength}{3274sp}%
\begingroup\makeatletter\ifx\SetFigFont\undefined%
\gdef\SetFigFont#1#2#3#4#5{%
  \reset@font\fontsize{#1}{#2pt}%
  \fontfamily{#3}\fontseries{#4}\fontshape{#5}%
  \selectfont}%
\fi\endgroup%
\begin{picture}(6999,2809)(7549,-3943)
\put(8271,-2526){\makebox(0,0)[lb]{$M_b$}}
\put(8686,-2311){\makebox(0,0)[lb]{$M_a$}}
\put(8306,-2901){\makebox(0,0)[lb]{$\ul{L}'$}}
\put(7751,-2986){\makebox(0,0)[lb]{$\ul{L}''$}}
\put(11600,-2536){\makebox(0,0)[lb]{$\otimes$}}
\put(8951,-2986){\makebox(0,0)[lb]{$\ul{L}$}}
\put(14006,-2550){\makebox(0,0)[lb]{$\ul{L}$}}
\put(13281,-2536){\makebox(0,0)[lb]{$\otimes$}}
\put(10831,-2550){\makebox(0,0)[lb]{$\ul{L}''$}}
\put(12441,-2550){\makebox(0,0)[lb]{$\ul{L}'$}}
\put(12286,-1816){\makebox(0,0)[lb]{$M_a$}}
\put(12286,-3256){\makebox(0,0)[lb]{$M_b$}}
\put(9676,-2536){\makebox(0,0)[lb]{$=$}}
\end{picture}%
\caption{Quilted pair of pants:
Composition of morphisms for Lagrangian correspondences} \label{quiltpants2}
\end{figure}

\begin{remark}
\begin{enumerate}
\item
The identity $ 1_{\ul{L}}\in \Hom(\ul{L},\ul{L})$ for a generalized
Lagrangian correspondence $\ul{L}$ is given by the relative invariant
$1_{\ul{L}}:=\Phi_{\ul{S}}$ associated to the quilted cap in Figure
\ref{quiltcap}, where the patches without outgoing ends are ordered 
from $M_b$ to $M_a$.
\item
The associativity and identity axiom for $\Don^\#(M_a,M_b)$ follow from the quilted gluing theorem \cite[Theorem 3.13]{quilts}
applied to the gluings (indicated by dashed lines) in Figure \ref{cor
axioms}.  Note that -- in contrast to Figure \ref{corhom} -- the
solutions on the quilted annulus (i.e.\ cylinder) are counted without
quotienting by $\R$, hence as in the strip example \cite[Example 2.5]{quilts}
this relative invariant is the identity.
\item
$\Don^\#(M_a,M_b)$ is a small category by the same arguments as in Remark~\ref{small}~(c).
\end{enumerate}
\end{remark}

\begin{figure} 
\begin{picture}(0,0)%
\includegraphics{k_quiltcap.pstex}%
\end{picture}%
\setlength{\unitlength}{2735sp}%
\begingroup\makeatletter\ifx\SetFigFont\undefined%
\gdef\SetFigFont#1#2#3#4#5{%
  \reset@font\fontsize{#1}{#2pt}%
  \fontfamily{#3}\fontseries{#4}\fontshape{#5}%
  \selectfont}%
\fi\endgroup%
\begin{picture}(4891,2234)(7774,-3585)
\put(11341,-2636){\makebox(0,0)[lb]{$\ul{L}$}}
\put(9766,-2536){\makebox(0,0)[lb]{$=$}}
\put(8371,-3391){\makebox(0,0)[lb]{$M_a$}}
\put(7966,-2761){\makebox(0,0)[lb]{$\ul{L}$}}
\put(8371,-2266){\makebox(0,0)[lb]{$M_b$}}
\put(11341,-3121){\makebox(0,0)[lb]{$M_b$}}
\put(11341,-1996){\makebox(0,0)[lb]{$M_a$}}
\end{picture}%
\caption{Quilted cap: Identity for Lagrangian correspondences}
\label{quiltcap}
\end{figure}

\begin{figure}[htbp]
\begin{picture}(0,0)%
\includegraphics{cor_axioms.pstex}%
\end{picture}%
\setlength{\unitlength}{1906sp}%
\begingroup\makeatletter\ifx\SetFigFont\undefined%
\gdef\SetFigFont#1#2#3#4#5{%
  \reset@font\fontsize{#1}{#2pt}%
  \fontfamily{#3}\fontseries{#4}\fontshape{#5}%
  \selectfont}%
\fi\endgroup%
\begin{picture}(12354,6635)(10339,-3899)
\put(14901,850){\makebox(0,0)[lb]{$f$}}
\put(13271,850){\makebox(0,0)[lb]{$g$}}
\put(11606,850){\makebox(0,0)[lb]{$h$}}
\put(13601,1934){\makebox(0,0)[lb]{$M_a$}}
\put(13601,-186){\makebox(0,0)[lb]{$M_b$}}
\put(10801,800){\makebox(0,0)[lb]{$\ul{L}'''$}}
\put(12281,800){\makebox(0,0)[lb]{$\ul{L}''$}}
\put(14120,800){\makebox(0,0)[lb]{$\ul{L}'$}}
\put(15606,800){\makebox(0,0)[lb]{$\ul{L}$}}
\put(21701,850){\makebox(0,0)[lb]{$f$}}
\put(20071,850){\makebox(0,0)[lb]{$g$}}
\put(18406,850){\makebox(0,0)[lb]{$h$}}
\put(19201,1934){\makebox(0,0)[lb]{$M_a$}}
\put(19201,-186){\makebox(0,0)[lb]{$M_b$}}
\put(17601,800){\makebox(0,0)[lb]{$\ul{L}'''$}}
\put(19081,800){\makebox(0,0)[lb]{$\ul{L}''$}}
\put(20920,800){\makebox(0,0)[lb]{$\ul{L}'$}}
\put(22406,800){\makebox(0,0)[lb]{$\ul{L}$}}
\put(12000,-1916){\makebox(0,0)[lb]{$M_a$}}
\put(12000,-3456){\makebox(0,0)[lb]{$M_b$}}
\put(10800,-2681){\makebox(0,0)[lb]{$\ul{L}'$}}
\put(13336,-2681){\makebox(0,0)[lb]{$\ul{L}$}}
\put(19826,-2681){\makebox(0,0)[lb]{$\ul{L}'$}}
\put(20631,-1961){\makebox(0,0)[lb]{$M_a$}}
\put(20631,-3456){\makebox(0,0)[lb]{$M_b$}}
\put(16446,-3456){\makebox(0,0)[lb]{$M_b$}}
\put(16401,-1961){\makebox(0,0)[lb]{$M_a$}}
\put(15756,-2681){\makebox(0,0)[lb]{$\ul{L}'$}}
\put(17250,-2681){\makebox(0,0)[lb]{$\ul{L}$}}
\put(22251,-2681){\makebox(0,0)[lb]{$\ul{L}$}}
\put(14961,-2581){\makebox(0,0)[lb]{$=$}}
\put(18246,-2581){\makebox(0,0)[lb]{$=$}}
\put(16600,-2666){\makebox(0,0)[lb]{$f$}}
\put(21550,-2666){\makebox(0,0)[lb]{$f$}}
\put(11650,-2666){\makebox(0,0)[lb]{$f$}}
\put(15000,-900){\makebox(0,0)[lb]{$f\circ(g\circ h)=(f\circ g)\circ h$}}
\put(16680,900){\makebox(0,0)[lb]{$=$}}
\put(14300,-3900){\makebox(0,0)[lb]{$1_{\ul{L}}\circ f=f$}}
\put(17700,-3900){\makebox(0,0)[lb]{$f= f\circ 1_{\ul{L}'}$}} 
\end{picture}%
\caption{Axioms for Donaldson-Fukaya category of correspondences}
\label{cor axioms}
\end{figure}

\medskip

\begin{remark} \label{ring iso}
Consider the case where the symplectic manifolds $M_a=M_b=M$ agree
(including Maslov cover and background class).  Then for any
admissible generalized Lagrangian correspondence
$\ul{L}\in\Obj(\Don^\#(M,M))$ the composition of morphisms in (c)
defines a ring structure on $\Hom(\ul{L},\ul{L})$, and (d) provides an
identity element.  Another application of the strip shrinking theorems shows that
this ring structure is isomorphic under embedded compositions of
correspondences: Let $\ul{L}$ and $\ul{L}'$ be two admissible
generalized Lagrangian correspondences from $M$ to itself.  Suppose
that they are equivalent in the sense of Section~\ref{symp cat}
through a series of embedded compositions of consecutive Lagrangian
correspondences, and such that each intermediate generalized
Lagrangian correspondence is admissible.  Then there is a canonical
ring isomorphism
$
\bigl(\Hom(\ul{L},\ul{L}),\circ\bigr) \simeq \bigl(\Hom(\ul{L}',\ul{L}'),\circ\bigr) 
$
which intertwines the identity elements $1_{\ul{L}}$ and
$1_{\ul{L}'}$.  

Indeed, by assumption there exists a sequence of admissible
generalized Lagrangian correspondences $\ul{L}^{j}$ connecting
$\ul{L}^0=\ul{L}$ to $\ul{L}^N=\ul{L}'$ as in the proof of
Theorem~\ref{thm compiso}.  In each step two consecutive Lagrangian
correspondences in the sequence $\ul{L}^j=(\ldots,L_-,L_+,\ldots)$ are
replaced by their embedded, monotone composition in $\ul{L}^{j\pm
1}=(\ldots,L_-\circ L_+,\ldots)$.  Theorem \ref{main2} provides
isomorphisms $\Psi^j:HF(\ul{L}^j,\ul{L}^j)\to HF(\ul{L}^{j\pm
1},\ul{L}^{j\pm 1})$ by shrinking the strip between $L_-$ and $L_+$.
Theorem \ref{intertwine} applies to the corresponding strips in the pair of
pants surface and the quilted cap surface of Definition~\ref{dfn
corcat}~(c) and (d) and shows that the isomorphisms $\Psi^j$
intertwine the ring structures and identity morphisms.  The full ring
isomorphism is given by a composition of these isomorphisms or their
inverses.
\end{remark}  

Next, consider a triple of symplectic manifolds $M_a,M_b,M_c$
satisfying (M1-2) with the same monotonicity constant $\tau$, equipped
with Maslov covers $\Lag^N(M_{(\cdot)}) \to M_{(\cdot)}$ (with the
same $N$) and background classes $b_{(\cdot)} \in
H^2(M_{(\cdot)},\Z_2)$.  
We denote by $ \Don^\#(M_a,M_b) \times \Don^\#(M_b,M_c)$ the product
category.  That is, objects are pairs $(\ul{L}_{ab},\ul{L}_{bc})$ of
objects of $ \Don^\#(M_a,M_b)$ and $ \Don^\#(M_b,M_c)$.  Morphisms are
pairs $(f,g)$ with $f \in \Hom(\ul{L}_{ab},\ul{L}_{ab}'), g \in
\Hom(\ul{L}_{bc},\ul{L}_{bc}')$.  Composition is given by
$$ (f,g) \circ (f',g') := (-1)^{|f'||g|} (f \circ f', g \circ g') $$
for $f \in
\Hom(\ul{L}_{ab},\ul{L}_{ab}'),f' \in
\Hom(\ul{L}_{ab}',\ul{L}_{ab}''), g \in
\Hom(\ul{L}_{bc},\ul{L}_{bc}'), g' \in
\Hom(\ul{L}_{bc}',\ul{L}_{bc}'')$.

\begin{definition}
The {\em composition functor}
\begin{equation} \label{concat}
\#: \Don^\#(M_a,M_b) \times \Don^\#(M_b,M_c) \to \Don^\#(M_a,M_c) 
\end{equation}
is defined as follows.
\begin{enumerate}
\item
On the level of objects $\#$ is defined by concatenation:
\begin{align*}
\Obj(\Don^\#(M_a,M_b)) \times \Obj(\Don^\#(M_b,M_c)) 
&\to
\Obj(\Don^\#(M_a,M_c)) \\
(\ul{L}_{ab} , \ul{L}_{bc}) &\mapsto \ul{L}_{ab} \# \ul{L}_{bc} , 
\end{align*}
where 
$$ (L_{01}^{ab},\ldots,L_{(r-1)r}^{ab}) \# (L_{01}^{bc},\ldots,L_{(r'-1)r'}^{bc}) 
 := (L_{01}^{ab},\ldots,L_{(r-1)r}^{ab},L_{01}^{bc},\ldots,L_{(r'-1)r'}^{bc}) .$$
\item
On the level of morphisms, $\#$ is defined for
$\ul{L}_{ab},\ul{L}_{ab}'\in \Obj(\Don^\#(M_a,M_b))$
and $\ul{L}_{bc},\ul{L}_{bc}'\in \Obj(\Don^\#(M_b,M_c))$
by
\begin{align*}
\Hom(\ul{L}_{ab},\ul{L}_{ab}') \times \Hom(\ul{L}_{bc},\ul{L}_{bc}')
&\to \Hom(\ul{L}_{ab}\#\ul{L}_{bc} , \ul{L}_{ab}'\#\ul{L}_{bc}') \\
(f, g ) \qquad\qquad &\mapsto f \# g := 
\Phi_{\ul{P}}(f\otimes g) ,
\end{align*}
where $\Phi_{\ul{P}}$ is the relative invariant associated to the
quilted pair of pants $\ul{P}$, where now every seam connects one of
the incoming cylindrical ends to the outgoing cylindrical end, as in
Figure \ref{quiltpants3}.  
\end{enumerate}
\end{definition}

\begin{figure}[ht]
\begin{picture}(0,0)%
\includegraphics{compfunk.pstex}%
\end{picture}%
\setlength{\unitlength}{3481sp}%
\begingroup\makeatletter\ifx\SetFigFont\undefined%
\gdef\SetFigFont#1#2#3#4#5{%
  \reset@font\fontsize{#1}{#2pt}%
  \fontfamily{#3}\fontseries{#4}\fontshape{#5}%
  \selectfont}%
\fi\endgroup%
\begin{picture}(5420,2220)(7369,-5648)
\put(8531,-4711){\makebox(0,0)[lb]{$M_b$}}
\put(9616,-5281){\makebox(0,0)[lb]{$\ul{L}_{ab}'$}}
\put(8040,-5326){\makebox(0,0)[lb]{$\ul{L}_{bc}$}}
\put(7511,-5326){\makebox(0,0)[lb]{$\ul{L}_{bc}'$}}
\put(9100,-5281){\makebox(0,0)[lb]{$\ul{L}_{ab}$}}
\put(10351,-4606){\makebox(0,0)[lb]{$=$}}
\put(11890,-4156){\makebox(0,0)[lb]{$\otimes$}}
\put(11890,-4966){\makebox(0,0)[lb]{$\otimes$}}
\put(11826,-4571){\makebox(0,0)[lb]{$M_b$}}
\put(11826,-5371){\makebox(0,0)[lb]{$M_c$}}
\put(11826,-3761){\makebox(0,0)[lb]{$M_a$}}
\put(11350,-4966){\makebox(0,0)[lb]{$\ul{L}'_{bc}$}}
\put(12300,-4966){\makebox(0,0)[lb]{$\ul{L}_{bc}$}}
\put(11350,-4156){\makebox(0,0)[lb]{$\ul{L}_{ab}'$}}
\put(12300,-4156){\makebox(0,0)[lb]{$\ul{L}_{ab}$}}
\put(8070,-4516){\makebox(0,0)[lb]{$M_c$}}
\put(8961,-4516){\makebox(0,0)[lb]{$M_a$}}
\end{picture}%
\caption{Composition functor on Donaldson categories of correspondences}
\label{quiltpants3}

\end{figure}

The composition axiom for the functor $\#$ follows from the quilted gluing theorem \cite[Theorem 3.13]{quilts} applied to the two degenerations of the
five-holed sphere shown in Figure \ref{threepants}:  For all
$f \in \Hom(\ul{L}_{ab},\ul{L}_{ab}'),f' \in
\Hom(\ul{L}_{ab}',\ul{L}_{ab}''), g \in \Hom(\ul{L}_{bc},\ul{L}_{bc}'), 
g' \in \Hom(\ul{L}_{bc}',\ul{L}_{bc}'')$
we obtain
$$ \#\bigl( (f,g)\circ(f',g') \bigr) =
(-1)^{|f'||g|} 
(f \circ f') \# (g \circ g') = 
(f \# g) \circ (f' \# g') .$$
The identity axiom for the concatenation functor, 
$1_{\ul{L}_{ab}} \# 1_{\ul{L}_{bc}} =
1_{\ul{L}_{ab}\#\ul{L}_{bc}}$, follows similarly from the quilted gluing theorem applied to the degenerations shown in
Figure~\ref{morepants}.

\begin{figure}[htb]
\begin{picture}(0,0)%
\includegraphics{catax.pstex}%
\end{picture}%
\setlength{\unitlength}{2072sp}%
\begingroup\makeatletter\ifx\SetFigFont\undefined%
\gdef\SetFigFont#1#2#3#4#5{%
  \reset@font\fontsize{#1}{#2pt}%
  \fontfamily{#3}\fontseries{#4}\fontshape{#5}%
  \selectfont}%
\fi\endgroup%
\begin{picture}(10352,4502)(11700,-1952)
\put(13006,974){\makebox(0,0)[lb]{$f'$}}
\put(14560,971){\makebox(0,0)[lb]{$f$}}
\put(14556,-514){\makebox(0,0)[lb]{$g$}}
\put(13030,-514){\makebox(0,0)[lb]{$g'$}}
\put(12016,929){\makebox(0,0)[lb]{$\ul{L}_{ab}''$}}
\put(12016,-601){\makebox(0,0)[lb]{$\ul{L}_{bc}''$}}
\put(15426,-601){\makebox(0,0)[lb]{$\ul{L}_{bc}$}}
\put(15426,929){\makebox(0,0)[lb]{$\ul{L}_{ab}$}}
\put(13696,929){\makebox(0,0)[lb]{$\ul{L}_{ab}'$}}
\put(13696,-601){\makebox(0,0)[lb]{$\ul{L}_{bc}'$}}
\put(13496,-1546){\makebox(0,0)[lb]{$M_c$}}
\put(13496,1919){\makebox(0,0)[lb]{$M_a$}}
\put(12216,164){\makebox(0,0)[lb]{$M_b$}}
\put(18856,974){\makebox(0,0)[lb]{$f'$}}
\put(20410,971){\makebox(0,0)[lb]{$f$}}
\put(20406,-514){\makebox(0,0)[lb]{$g$}}
\put(18880,-514){\makebox(0,0)[lb]{$g'$}}
\put(17866,929){\makebox(0,0)[lb]{$\ul{L}_{ab}''$}}
\put(17866,-601){\makebox(0,0)[lb]{$\ul{L}_{bc}''$}}
\put(21196,-601){\makebox(0,0)[lb]{$\ul{L}_{bc}$}}
\put(21196,929){\makebox(0,0)[lb]{$\ul{L}_{ab}$}}
\put(19546,929){\makebox(0,0)[lb]{$\ul{L}_{ab}'$}}
\put(19546,-601){\makebox(0,0)[lb]{$\ul{L}_{bc}'$}}
\put(19446,-1546){\makebox(0,0)[lb]{$M_c$}}
\put(19446,1919){\makebox(0,0)[lb]{$M_a$}}
\put(18666,164){\makebox(0,0)[lb]{$M_b$}}
\put(16751,164){\makebox(0,0)[lb]{$=$}}
\end{picture}%
\caption{Composition axiom for the concatenation functor}
\label{threepants}
\end{figure}

\begin{figure}[ht]
\begin{picture}(0,0)%
\includegraphics{idaxcat.pstex}%
\end{picture}%
\setlength{\unitlength}{3274sp}%
\begingroup\makeatletter\ifx\SetFigFont\undefined%
\gdef\SetFigFont#1#2#3#4#5{%
  \reset@font\fontsize{#1}{#2pt}%
  \fontfamily{#3}\fontseries{#4}\fontshape{#5}%
  \selectfont}%
\fi\endgroup%
\begin{picture}(4925,2189)(10163,-5609)
\put(10866,-5371){\makebox(0,0)[lb]{$M_c$}}
\put(10841,-4206){\makebox(0,0)[lb]{$\ul{L}_{ab}$}}
\put(10841,-5016){\makebox(0,0)[lb]{$\ul{L}_{bc}$}}
\put(10866,-3751){\makebox(0,0)[lb]{$M_a$}}
\put(10421,-4561){\makebox(0,0)[lb]{$M_b$}}
\put(13841,-4591){\makebox(0,0)[lb]{$\ul{L}_{ab}\#\ul{L}_{bc}$}}
\put(14096,-5191){\makebox(0,0)[lb]{$M_c$}}
\put(14096,-3931){\makebox(0,0)[lb]{$M_a$}}
\put(12511,-4561){\makebox(0,0)[lb]{$=$}}
\put(11606,-5551){\makebox(0,0)[lb]{$1_{\ul{L}_{ab}} \# 1_{\ul{L}_{bc}} = 1_{\ul{L}_{ab}\#\ul{L}_{bc}}$}}
\end{picture}%
\caption{Identity axiom for the concatenation functor}
\label{morepants}
\end{figure}

\begin{remark}
The construction of functors associated to Lagrangian correspondences
in Section~\ref{Funk} has an obvious extension (\ref{funklong}) for
generalized Lagrangian correspondences.  For
$\ul{L}_{ab}\in\Don^\#(M_a,M_b)$ the functor
$\Phi(\ul{L}_{ab}):\Don^\#(M_a)\to\Don^\#(M_b)$ acts on objects
$\ul{L}\in\Obj(\Don^\#(M_a))$ by concatenation
$\Phi(\ul{L}_{ab})=\ul{L}\#\ul{L}_{ab}$, and on morphisms
$\Phi(\ul{L}_{ab}):HF(\ul{L},\ul{L}')\to
HF(\ul{L}\#\ul{L}_{ab},\ul{L}'\#\ul{L}_{ab})$ is defined by
composition $\Phi(L_{01})\circ\ldots \circ \Phi(L_{(r-1)r})$ of the
functors associated to the elementary Lagrangian correspondences
$(L_{01},\ldots,L_{(r-1)r})=\ul{L}_{ab}$.  Alternatively, the map
$\Phi(\ul{L}_{ab})$ on morphisms can be defined directly by the
relative invariant in Figure~\ref{funkcomp}, see Remark~\ref{rmk
funkcomp}.  Using the first definition, we have a tautological
equality of functors
\begin{equation} \label{concateq}
\Phi(\ul{L}_{ab}) \circ \Phi(\ul{L}_{bc})
= \Phi(\ul{L}_{ab} \# \ul{L}_{bc}) 
\end{equation}
for any two objects $\ul{L}_{ab} \in \Don^\#(M_a,M_b)$ and
$\ul{L}_{bc} \in \Don^\#(M_b,M_c)$.
\end{remark}

\section{Natural transformations associated to Floer cohomology classes}  
\label{nattran}

Let $M_a$ and $M_b$ be as in the previous section and
let $\ul{L}_{ab},\ul{L}_{ab}'$ be objects in $\Don^\#(M_a,M_b)$.  

\begin{definition}
Given a morphism $T \in \Hom(\ul{L}_{ab},\ul{L}_{ab}')$ we define a 
natural transformation
$$\Phi_T: \ \Phi(\ul{L}_{ab}) \to \Phi(\ul{L}_{ab}') $$ 
as follows: To any object $\ul{L}$ in $\Don^\#(M_a)$ we assign the morphism
$$ \Phi_T(\ul{L}) \in \Hom(\Phi(\ul{L}_{ab})(\ul{L}),\Phi(\ul{L}_{ab}')(\ul{L})) $$
given by the relative invariant associated to the surface in Figure
\ref{natural}, which is independent of the ordering of the patches.
(Note that the end where $T$ is inserted is cylindrical in the sense
that the strip-like ends glue together to a cylindrical end.)
\end{definition}

\begin{figure}[ht]
\begin{picture}(0,0)%
\includegraphics{k_natural.pstex}%
\end{picture}%
\setlength{\unitlength}{3356sp}%
\begingroup\makeatletter\ifx\SetFigFont\undefined%
\gdef\SetFigFont#1#2#3#4#5{%
  \reset@font\fontsize{#1}{#2pt}%
  \fontfamily{#3}\fontseries{#4}\fontshape{#5}%
  \selectfont}%
\fi\endgroup%
\begin{picture}(6750,2063)(-9212,-553)
\put(-3284,1100){\makebox(0,0)[lb]{$M_0$}}
\put(-3284,700){\makebox(0,0)[lb]{$M_1$}}
\put(-3285,150){\makebox(0,0)[lb]{$M_2$}}
\put(-3914,-100){\makebox(0,0)[lb]{$L_{02}$}}
\put(-3914,870){\makebox(0,0)[lb]{$L_{01}$}}
\put(-3914,560){\makebox(0,0)[lb]{$L_{12}$}}
\put(-5450,1250){\makebox(0,0)[lb]{$L_0$}}
\put(-8500,400){\makebox(0,0)[lb]{T}}
\put(-4800,400){\makebox(0,0)[lb]{T}}
\put(-6900,350){\makebox(0,0)[lb]{$M_a$}}
\put(-6900,1100){\makebox(0,0)[lb]{$M_b$}}
\put(-9100,1250){\makebox(0,0)[lb]{$\ul{L}$}}
\put(-7739,880){\makebox(0,0)[lb]{$\ul{L}'_{ab}$}}
\put(-7739,-100){\makebox(0,0)[lb]{$\ul{L}_{ab}$}}
\end{picture}%
\caption{Natural transformation associated to a Floer cohomology class: 
General case and an example, where $\ul{L}$ consists of a single
Lagrangian $L_0$, $\ul{L}_{ab}$ consists of a single
Lagrangian $L_{02}$, and $\ul{L}_{ab}'$ consists of a pair
$(L_{01},L_{12})$.}
\label{natural}
\end{figure}
To see that $\Phi_T$ is a natural transformation of functors
$\Phi(\ul{L}_{ab}) \to \Phi(\ul{L}'_{ab})$ we must show that for any two
objects $\ul{L},\ul{L}'$ in $\Don^\#(M_a)$ and any morphism $f \in
\Hom(\ul{L},\ul{L}')$ we have
\begin{equation}\label{nattr}
\Phi(\ul{L}_{ab})(f) \circ \Phi_T(\ul{L}') = (-1)^{|T||f|} \Phi_T(\ul{L}) \circ \Phi({\ul{L}_{ab}'})(f) .
\end{equation}
This identity follows from the quilted gluing theorem \cite[Theorem 3.13]{quilts} applied to the gluing shown
in Figure \ref{Natfig}.
\begin{figure}[ht]
\begin{picture}(0,0)%
\includegraphics{knatur.pstex}%
\end{picture}%
\setlength{\unitlength}{4144sp}%
\begingroup\makeatletter\ifx\SetFigFont\undefined%
\gdef\SetFigFont#1#2#3#4#5{%
  \reset@font\fontsize{#1}{#2pt}%
  \fontfamily{#3}\fontseries{#4}\fontshape{#5}%
  \selectfont}%
\fi\endgroup%
\begin{picture}(5660,3079)(-11,-3578)
\put(2000,-1840){\makebox(0,0)[lb]{$M_a$}}
\put(2000,-2175){\makebox(0,0)[lb]{$M_b$}}
\put(1776,-1506){\makebox(0,0)[lb]{$\ul{L}'$}}
\put(1700,-2800){\makebox(0,0)[lb]{$\ul{L}$}}
\put(1232,-1552){\makebox(0,0)[lb]{$\ul{L}_{ab}'$}}
\put(1242,-2660){\makebox(0,0)[lb]{$\ul{L}_{ab}$}}
\put(580,-3050){\makebox(0,0)[lb]{$T$}}
\put(3000,-2086){\makebox(0,0)[lb]{$=$}}
\put(4900,-1790){\makebox(0,0)[lb]{$M_a$}}
\put(4900,-2115){\makebox(0,0)[lb]{$M_b$}}
\put(4810,-1480){\makebox(0,0)[lb]{$\ul{L}'$}}
\put(4701,-2750){\makebox(0,0)[lb]{$\ul{L}$}}
\put(4253,-1532){\makebox(0,0)[lb]{$\ul{L}_{ab}'$}}
\put(4243,-2580){\makebox(0,0)[lb]{$\ul{L}_{ab}$}}
\put(3574,-1129){\makebox(0,0)[lb]{$T$}}
\end{picture}%
\caption{Natural transformation axiom}
\label{Natfig}
\end{figure}

\begin{proposition} \label{functor}
The maps $ \ul{L}_{ab} \mapsto \Phi(\ul{L}_{ab})$ and  
$T \mapsto \Phi_T $ define a functor 
$$
\Don^\#(M_a,M_b) \to \Fun(\Don^\#(M_a),\Don^\#(M_b)) .
$$
\end{proposition}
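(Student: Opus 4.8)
The well-definedness of the assignment on objects and morphisms is already in hand: each $\Phi(\ul{L}_{ab})$ is a functor by the construction of Section~\ref{Funk} together with the tautological identity \eqref{concateq}, and each $\Phi_T$ is a natural transformation by \eqref{nattr}, established via Figure~\ref{Natfig}. Thus the plan is to verify only the two functor axioms, namely that $T \mapsto \Phi_T$ preserves identities and composition, where composition in the target $\Fun(\Don^\#(M_a),\Don^\#(M_b))$ is vertical composition of natural transformations and both categories use the diagrammatic order for $\circ$. As throughout the paper, each axiom is reduced to a single quilted surface admitting two degenerations, to which the quilted gluing theorem \cite[Theorem 3.13]{quilts} is applied.

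For the identity axiom I would glue the quilted cap defining $1_{\ul{L}_{ab}}$ (Figure~\ref{quiltcap}) into the cylindrical end of the surface defining $\Phi_T$ (Figure~\ref{natural}) at which $T$ is inserted. For every object $\ul{L}\in\Obj(\Don^\#(M_a))$ the resulting surface is homotopic to the quilted identity surface for the concatenation $\ul{L}\#\ul{L}_{ab}=\Phi(\ul{L}_{ab})(\ul{L})$, which computes $1_{\Phi(\ul{L}_{ab})(\ul{L})}$. The gluing theorem then yields $\Phi_{1_{\ul{L}_{ab}}}(\ul{L}) = 1_{\Phi(\ul{L}_{ab})(\ul{L})}$, i.e.\ $\Phi_{1_{\ul{L}_{ab}}}$ is the identity natural transformation on $\Phi(\ul{L}_{ab})$.

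For the composition axiom, take $T\in\Hom(\ul{L}_{ab},\ul{L}'_{ab})$ and $T'\in\Hom(\ul{L}'_{ab},\ul{L}''_{ab})$ and fix $\ul{L}\in\Obj(\Don^\#(M_a))$. I would consider the quilted surface with boundary and seam conditions $\ul{L}$ carrying two incoming cylindrical ends labelled $T$ and $T'$, and evaluate its relative invariant in two ways. Degenerating so that $T$ and $T'$ are first combined along the quilted pair of pants $\ul{P}$ of Figure~\ref{quiltpants2} feeds $T\circ T'$ into the natural-transformation surface, giving $\Phi_{T\circ T'}(\ul{L})$. Degenerating in the other order factors the surface through the half-pair-of-pants of $\Don^\#(M_b)$ applied to the two natural-transformation surfaces, giving the vertical composite $\Phi_T(\ul{L})\circ\Phi_{T'}(\ul{L})$. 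Since the two degenerations are homotopic quilted surfaces, the gluing theorem gives $\Phi_{T\circ T'}(\ul{L}) = \Phi_T(\ul{L})\circ\Phi_{T'}(\ul{L})$ for all $\ul{L}$, hence $\Phi_{T\circ T'} = \Phi_T\circ\Phi_{T'}$ as natural transformations $\Phi(\ul{L}_{ab}) \to \Phi(\ul{L}''_{ab})$.

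The main obstacle is the sign bookkeeping with $\Z$ coefficients, carried out in the determinant-line formalism of Remark~\ref{zmod}. One must check that the two gluings above induce the same orientation of the glued determinant lines up to the Koszul signs built into vertical composition of graded natural transformations and into \eqref{nattr}; this is the same kind of computation that establishes associativity in Remark~\ref{zmod}, and it is precisely what makes the degree convention $|T\circ T'| = |T| + |T'|$ compatible with the sign in \eqref{nattr}. With $\Z_2$ coefficients all signs are trivial and both axioms follow immediately from the gluing theorem.
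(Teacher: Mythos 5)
Your proposal matches the paper's proof: the composition axiom $\Phi_{T\circ T'}(\ul{L}) = \Phi_T(\ul{L})\circ\Phi_{T'}(\ul{L})$ is obtained by applying the quilted gluing theorem to the two degenerations of a quilted surface with two incoming cylindrical ends (the paper's Figure~\ref{Tcomp}), and the identity axiom $\Phi_{1_{\ul{L}_{ab}}}(\ul{L})=1_{\Phi(\ul{L}_{ab})(\ul{L})}$ by gluing the quilted cap into the cylindrical end (Figure~\ref{Tid}). Your added remarks on functoriality of $\Phi(\ul{L}_{ab})$, naturality of $\Phi_T$, and the sign bookkeeping of Remark~\ref{zmod} are consistent with what the paper establishes elsewhere, so this is essentially the same argument.
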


\begin{proof} 
We apply the quilted gluing theorem \cite[Theorem 3.13]{quilts} to the quilted surfaces in Figure \ref{Tcomp} to deduce the composition axiom 
$
\Phi_T(\ul{L}) \circ \Phi_{T'}(\ul{L}) = \Phi_{T \circ T'}(\ul{L})
$
for all $T\in\Hom(\ul{L}_{ab},\ul{L}_{ab}')$,
$T'\in\Hom(\ul{L}_{ab}',\ul{L}_{ab}'')$, and
$\ul{L}\in\Obj(\Don^\#(M_a))$. The identity axiom
$
\Phi_{1_{\ul{L}_{ab}}}(\ul{L})=1_{\Phi(\ul{L}_{ab})(\ul{L})}
$
for $T=1_{\ul{L}_{ab}}\in\Hom(\ul{L}_{ab},\ul{L}_{ab})$ and $\ul{L}\in\Obj(\Don^\#(M_a))$
follows from the quilted gluing theorem applied to the quilted surface in Figure \ref{Tid}.
\end{proof}

\begin{figure}[ht]
\begin{picture}(0,0)%
\includegraphics{ktrancomp.pstex}%
\end{picture}%
\setlength{\unitlength}{4144sp}%
\begingroup\makeatletter\ifx\SetFigFont\undefined%
\gdef\SetFigFont#1#2#3#4#5{%
  \reset@font\fontsize{#1}{#2pt}%
  \fontfamily{#3}\fontseries{#4}\fontshape{#5}%
  \selectfont}%
\fi\endgroup%
\begin{picture}(5889,2861)(6,-3374)
\put(561,-2970){\makebox(0,0)[lb]{{{{$T$}}}}}
\put(540,-1180){\makebox(0,0)[lb]{{{{$T'$}}}}}
\put(2100,-1660){\makebox(0,0)[lb]{{{{$M_a$}}}}}
\put(2100,-2100){\makebox(0,0)[lb]{{{{$M_b$}}}}}
\put(1600,-1350){\makebox(0,0)[lb]{{{{$\ul{L}$}}}}}
\put(1600,-1800){\makebox(0,0)[lb]{{{{$\ul{L}_{ab}''$}}}}}
\put(930,-2170){\makebox(0,0)[lb]{{{{$\ul{L}_{ab}'$}}}}}
\put(1600,-2400){\makebox(0,0)[lb]{{{{$\ul{L}_{ab}$}}}}}
\put(2800,-2050){\makebox(0,0)[lb]{{{{$=$}}}}}
\put(3600,-1987){\makebox(0,0)[lb]{{{{$M_a$}}}}}
\put(5000,-1950){\makebox(0,0)[lb]{{{{$M_b$}}}}}
\put(4800,-1300){\makebox(0,0)[lb]{{{{$\ul{L}$}}}}}
\put(4730,-1730){\makebox(0,0)[lb]{{{{$\ul{L}_{ab}''$}}}}}
\put(4730,-2200){\makebox(0,0)[lb]{{{{$\ul{L}_{ab}$}}}}}
\put(4000,-2050){\makebox(0,0)[lb]{{{{$\ul{L}_{ab}'$}}}}}
\put(4050,-2350){\makebox(0,0)[lb]{{{{$T$}}}}}
\put(4050,-1700){\makebox(0,0)[lb]{{{{$T'$}}}}}
\end{picture}%
\caption{Composition axiom for natural transformations}
\label{Tcomp}
\end{figure}
\begin{figure}[ht]
\begin{picture}(0,0)%
\includegraphics{tranid.pstex}%
\end{picture}%
\setlength{\unitlength}{2196sp}%
\begingroup\makeatletter\ifx\SetFigFont\undefined%
\gdef\SetFigFont#1#2#3#4#5{%
  \reset@font\fontsize{#1}{#2pt}%
  \fontfamily{#3}\fontseries{#4}\fontshape{#5}%
  \selectfont}%
\fi\endgroup%
\begin{picture}(3240,2559)(3601,-3703)
\put(7000,-2600){\makebox(0,0)[lb]{$=1_{\Phi(\ul{L}_{ab})(\ul{L})}$}}
\put(4350,-2600){\makebox(0,0)[lb]{$\ul{L}_{ab}$}}
\put(5536,-2500){\makebox(0,0)[lb]{$M_b$}}
\put(5536,-1646){\makebox(0,0)[lb]{$M_a$}}
\put(3681,-1561){\makebox(0,0)[lb]{$\ul{L}$}}
\end{picture}%
\caption{Identity axiom for natural transformations}
\label{Tid}
\end{figure}
\begin{remark}   \label{inject}  
In this remark we discuss the special case of the diagonal $\Delta
\subset M^- \times M$, which gives rise to the so-called open-closed
maps in $2D$ TQFT.  By \cite{pss} there is a ring isomorphism between
the Floer cohomology of the diagonal $HF(\Delta,\Delta)$ and the quantum
cohomology $HF(\Id)$.  Our construction gives for any element $\alpha \in
HF(\Delta,\Delta) \simeq HF(\Id)$ an automorphism of the identity
functor $\Phi(\Delta)$ (more precisely, of the shift functor
$\Phi(\Delta)\simeq\Psi_M$ in case $w_2(M)\ne 0$).  In particular, we
obtain elements $\Phi_{\alpha}(L) \in HF((L,\Delta),(L,\Delta)) \simeq
HF(L,L)$ for each admissible Lagrangian submanifold $L\subset M$.
(Here $HF((L,\Delta),(L,\Delta)) \simeq HF(L,L)$ is a ring isomorphism
by Remark \ref{ring iso} .)  Proposition \ref{functor} gives
$ \Phi_{\alpha \circ \beta}(L) = \Phi_\alpha(L) \circ \Phi_\beta(L).$
That is, the closed-open map $HF(\Id) \to HF(L,L)$ is a ring
homomorphism.  The closed-open maps in Floer theory are discussed in
more detail in Albers \cite[Theorem 3.1]{alb:ext}.  

For any pair of Lagrangians $L^0,L^1 \subset M$, combining the ring
homomorphism $HF(\Id) \to HF(L^k,L^k)$ with the composition
$HF(L^0,L^0)\times HF(L^0,L^1)\to HF(L^0,L^1)$
resp.\ $HF(L^0,L^1)\times HF(L^1,L^1)\to HF(L^0,L^1)$
gives a module structure on $HF(L^0,L^1)$ over $HF(\Id)$.  The module structure is
independent of $k=0,1$, by the natural transformation axiom
\eqref{nattr} with $\ul{L}_{ab}=\ul{L}'_{ab}=\Delta$.  It is equal to the module
structure induced by the isomorphism $HF(L^0 \times L^1, \Delta) \to
HF(L^0,L^1)$ of \cite{orient}.

Note that if $HF(\Id) \to HF(L,L)$ is a surjection and $HF(\Id)$ is
semisimple then $HF(L,L)$ is again semisimple, and in particular
nilpotent free.
\end{remark}

Next, we show that embedded composition of Lagrangian correspondences
gives rise to isomorphic objects in the Donaldson-Fukaya category.
For simplicity we restrict to the case of elementary Lagrangian
correspondences, i.e.\ sequences of length 1. The statement and
argument for the general case is analogous.

\begin{theorem} \label{main22}
Let $L_{01}\in\Obj(\Don^\#(M_0,M_1))$ and $L_{12} \in\Obj(\Don^\#(M_1,
M_2))$ be admissible Lagrangian correspondences.  Suppose that $L_{01}
\times_{M_1} L_{12}\to M_0^-\times M_2$
is cut out transversally and embeds to a smooth, admissible
Lagrangian correspondence $L_{02}:=L_{01}\circ L_{12} \in
\Obj(\Don^\#(M_0, M_2))$.  Then $\Delta_{M_0} \# L_{02}$, $L_{02} \#
\Delta_{M_2}$, and $L_{01} \# L_{12}$ are all isomorphic in
$\Don^\#(M_0,M_2)$.
\end{theorem}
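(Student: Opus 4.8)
The plan is to construct explicit mutually inverse morphisms between the three objects by realizing each of them, via a single embedded strip-shrinking, as the elementary correspondence $L_{02}$, and then transporting the identity $1_{L_{02}}$ back through the canonical Floer cohomology isomorphism of Theorem~\ref{main2}. The whole argument runs parallel to Remark~\ref{ring iso}, except that here the two endpoints $M_0$ and $M_2$ differ, so we obtain an isomorphism of \emph{objects} rather than of endomorphism rings.

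First I would record that all three compositions are embedded in the sense of Definition~\ref{embedded}: the middle one $L_{01}\circ L_{12}=L_{02}$ holds by hypothesis, while $\Delta_{M_0}\circ L_{02}=L_{02}=L_{02}\circ\Delta_{M_2}$ are smooth and embedded for free, as already observed in Remark~\ref{small}. Moreover every object appearing is admissible. Thus each of $\Delta_{M_0}\#L_{02}$, $L_{02}\#\Delta_{M_2}$ and $L_{01}\#L_{12}$ is obtained from the single correspondence $(L_{02})$ by widening one admissible strip, over $M_0$, $M_2$ and $M_1$ respectively.

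Next, for any two objects $\ul{K},\ul{K}'\in\Obj(\Don^\#(M_0,M_2))$ related by one such embedded composition I would show they are isomorphic. Theorem~\ref{main2}, applied to the cyclic sequences computing $HF(\ul{K},\ul{K}')$ and $HF(\ul{K}',\ul{K})$, yields canonical isomorphisms onto the Floer cohomology $HF(L_{02},L_{02})$ underlying $\Hom((L_{02}),(L_{02}))$; let $\Psi\in\Hom(\ul{K},\ul{K}')$ and $\bar\Psi\in\Hom(\ul{K}',\ul{K})$ be the elements corresponding to the identity $1_{L_{02}}$. To see that these are mutually inverse I would apply Theorem~\ref{intertwine} to the quilted pair of pants defining composition: the strip-shrinking isomorphisms intertwine $\Phi_{\ul{P}}$, so $\Psi\circ\bar\Psi=\Phi_{\ul{P}}(\Psi\otimes\bar\Psi)$ is carried to $1_{L_{02}}\circ 1_{L_{02}}=1_{L_{02}}$, and Theorem~\ref{intertwine} applied to the quilted cap identifies $1_{L_{02}}$ with $1_{\ul{K}}$. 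Hence $\Psi\circ\bar\Psi=1_{\ul{K}}$, and symmetrically $\bar\Psi\circ\Psi=1_{\ul{K}'}$, so $\Psi$ is an isomorphism. Applying this to each of the three objects against $(L_{02})$ and using transitivity of isomorphism then gives the claim.

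The main obstacle is not analytic, since the strip-shrinking is entirely packaged in Theorems~\ref{main2} and~\ref{intertwine}, but rather the bookkeeping of gradings and relative spin structures. One must check that the degree shifts $[n_\ell d]$ of Theorem~\ref{intertwine}, with $d\in\{-1,0,1\}$ according to the type of the shrunk strip, and the relative spin shift of Theorem~\ref{main2} combine so that the resulting maps are genuine degree-$0$ isomorphisms. This is exactly where the two diagonals enter: shrinking the strip over $M_0$ or $M_2$ forces a shift of the relative spin structure by $w_2(M_0)$ resp.\ $w_2(M_2)$, which is precisely the shift already built into $\Delta_{M_0}$ and $\Delta_{M_2}$ (Remark~\ref{rmk diag} and Corollary~\ref{phidelta}). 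Verifying that these shifts cancel against the intrinsic relative spin structure of $L_{02}$, so that the three induced structures on the reference object $(L_{02})$ agree and the constructed morphisms are spin-compatible, is the delicate point that the choice of statement is designed to make work out.
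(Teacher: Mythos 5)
Your core mechanism --- identify the relevant $\Hom$ spaces with an endomorphism ring via Theorem~\ref{main2}, pull back the identity to get candidate morphisms, and verify that they are mutually inverse by applying Theorem~\ref{intertwine} to strip-shrinking degenerations of the quilted pair of pants and cap --- is exactly the paper's. The difference, and it is a genuine gap, is your choice of reference object. You route everything through the elementary correspondence $(L_{02})$ and then invoke transitivity, i.e.\ you propose to prove that each of $\Delta_{M_0}\# L_{02}$, $L_{02}\#\Delta_{M_2}$ and $L_{01}\# L_{12}$ is isomorphic to $(L_{02})$ itself. That stronger statement is false in general: the remark immediately following the theorem, together with Proposition~\ref{diagonal}, shows that an isomorphism $L_{01}\# L_{12}\simeq L_{01}\circ L_{12}$ requires the extra hypothesis $w_2(M_0)=0$ or $w_2(M_2)=0$. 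The reason is the one you flag in your last paragraph but then wave away: the three strip-shrinkings (over $M_1$, over $M_0$, over $M_2$) induce on the underlying manifold $L_{02}$ brane structures shifted by $w_2(M_1)$, $w_2(M_0)$, $w_2(M_2)$ respectively, and these need not agree with one another or with the given brane structure on $L_{02}\in\Obj(\Don^\#(M_0,M_2))$. So ``the element corresponding to $1_{L_{02}}$'' does not in general live in $\Hom(\ul{K},(L_{02}))$ for the object $(L_{02})$ you want, and the transitivity step collapses.

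The fix is what the paper does: compare the listed objects pairwise and never pass through $(L_{02})$. Concretely, Theorem~\ref{main2} identifies both $\Hom(L_{01}\# L_{12},\Delta_{M_0}\# L_{02})$ and $\Hom(\Delta_{M_0}\# L_{02},L_{01}\# L_{12})$ with $\Hom(\Delta_{M_0}\# L_{02},\Delta_{M_0}\# L_{02})$, which tautologically contains an identity element whatever the spin shifts do; pulling $1_{\Delta_{M_0}\# L_{02}}$ back gives $\phi$ and $\psi$, and Theorem~\ref{intertwine} applied to the pair-of-pants degenerations shows $\phi\circ\psi$ and $\psi\circ\phi$ are identities. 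The same argument gives $L_{01}\# L_{12}\simeq L_{02}\#\Delta_{M_2}$, and the three-way statement follows. Your verification of the inverse identities would then go through essentially verbatim; it is only the reference object that has to change.
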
 

\begin{remark}
If in Theorem \ref{main22} we moreover assume $w_2(M_0)=0$ or
$w_2(M_2)=0$, then we in fact have an isomorphism between $L_{01} \#
L_{12}$ and $L_{01}\circ L_{12}$, by Proposition \ref{diagonal} below.
\end{remark}

\begin{proof}   
By Theorem \ref{main2}, $\Hom(L_{01}\# L_{12},\Delta_{M_0} \# L_{02})$
resp.\ $\Hom(\Delta_{M_0}\# L_{02},L_{01} \# L_{12})$ is isomorphic to
$\Hom(\Delta_{M_0}\#L_{02},\Delta_{M_0}\#L_{02})$; let $\phi$ resp.\
$\psi$ denote the inverse image of the identity
$1_{\Delta_{M_0}\#L_{02}}$.  To establish the isomorphism $L_{01}\#
L_{12} \simeq \Delta_{M_0} \# L_{02}$ we show that
$ \psi \circ \phi = 1_{\Delta_{M_0}\#L_{02}}$ and $\phi \circ \psi =
1_{L_{01} \# L_{12}} $
for the composition by the pair of pants products.  These are special
cases of Theorem \ref{intertwine} applied to the degenerations shown
in Figure \ref{shrinkpants}.  The isomorphism $L_{01}\# L_{12} \simeq
L_{02} \# \Delta_{M_2}$ is proven in the same way.
\end{proof}

\begin{figure}
\begin{picture}(0,0)%
\includegraphics{kshrinkpants.pstex}%
\end{picture}%
\setlength{\unitlength}{2403sp}%
\begingroup\makeatletter\ifx\SetFigFont\undefined%
\gdef\SetFigFont#1#2#3#4#5{%
  \reset@font\fontsize{#1}{#2pt}%
  \fontfamily{#3}\fontseries{#4}\fontshape{#5}%
  \selectfont}%
\fi\endgroup%
\begin{picture}(9559,5865)(-1225,-5469)
\put(3341,-1086){\makebox(0,0)[lb]{$\underset{\delta\to 0}{\sim}$}}
\put(1676,-4256){\makebox(0,0)[lb]{$\underset{\delta\to 0}{\sim}$}}
\put(5101,-4256){\makebox(0,0)[lb]{$\underset{\delta\to 0}{\sim}$}}
\put(7210,-4200){\makebox(0,0)[lb]{$\delta$}}
\put(1621,-16){\makebox(0,0)[lb]{$M_0$}}
\put(586,-585){\makebox(0,0)[lb]{$\Delta$}}
\put(2651,-1516){\makebox(0,0)[lb]{$L_{02}$}}
\put(1441,-1516){\makebox(0,0)[lb]{$L_{12}$}}
\put(2701,-586){\makebox(0,0)[lb]{$\Delta$}}
\put(586,-1516){\makebox(0,0)[lb]{$L_{02}$}}
\put(1621,-2131){\makebox(0,0)[lb]{$M_2$}}
\put(1441,-1006){\makebox(0,0)[lb]{$M_1$}}
\put(1996,-1006){\makebox(0,0)[lb]{$\delta$}}
\put(1441,-616){\makebox(0,0)[lb]{$L_{01}$}}
\put(-44,-3166){\makebox(0,0)[lb]{$M_0$}}
\put(-44,-5281){\makebox(0,0)[lb]{$M_2$}}
\put(-1040,-3766){\makebox(0,0)[lb]{$L_{01}$}}
\put(-1045,-4666){\makebox(0,0)[lb]{$L_{12}$}}
\put(24,-4666){\makebox(0,0)[lb]{$L_{02}$}}
\put(24,-3736){\makebox(0,0)[lb]{$\Delta$}}
\put(936,-3766){\makebox(0,0)[lb]{$L_{01}$}}
\put(936,-4666){\makebox(0,0)[lb]{$L_{12}$}}
\put(3376,-3166){\makebox(0,0)[lb]{$M_0$}}
\put(2371,-3736){\makebox(0,0)[lb]{$\Delta$}}
\put(4386,-4666){\makebox(0,0)[lb]{$L_{02}$}}
\put(4456,-3736){\makebox(0,0)[lb]{$\Delta$}}
\put(2321,-4666){\makebox(0,0)[lb]{$L_{02}$}}
\put(3376,-5281){\makebox(0,0)[lb]{$M_2$}}
\put(3396,-3736){\makebox(0,0)[lb]{$\Delta$}}
\put(3376,-4666){\makebox(0,0)[lb]{$L_{02}$}}
\put(6796,-3166){\makebox(0,0)[lb]{$M_0$}}
\put(6796,-5281){\makebox(0,0)[lb]{$M_2$}}
\put(6516,-3766){\makebox(0,0)[lb]{$L_{01}$}}
\put(6516,-4666){\makebox(0,0)[lb]{$L_{12}$}}
\put(5086,-16){\makebox(0,0)[lb]{$M_0$}}
\put(5086,-2131){\makebox(0,0)[lb]{$M_2$}}
\put(5106,-586){\makebox(0,0)[lb]{$\Delta$}}
\put(5106,-1516){\makebox(0,0)[lb]{$L_{02}$}}
\put(1201,-4156){\makebox(0,0)[lb]{$\delta$}}
\put(-1079,-4156){\makebox(0,0)[lb]{$\delta$}}
\put(6031,-4226){\makebox(0,0)[lb]{$M_1$}}
\put(1071,-1006){\makebox(0,0)[lb]{$\phi$}}
\put(-530,-4156){\makebox(0,0)[lb]{$\psi$}}
\put(2341,-1006){\makebox(0,0)[lb]{$\psi$}}
\put(600,-4156){\makebox(0,0)[lb]{$\phi$}}
\end{picture}%
\caption{Isomorphism of composition and concatenation}
\label{shrinkpants}
\end{figure}

\begin{proposition} \label{diagonal}
Suppose that $M_0$ satisfies $w_2(M_0)=0$.  Then the diagonal
$\Delta_{M_0}\in\Don^\#(M_0,M_0)$ is an identity of the composition
$\#$ up to isomorphism. That is, for every generalized Lagrangian
$\ul{L}\in\Obj(\Don^\#(M_0,M_1))$ the objects $\Delta_{M_0}\#\ul{L}$
and $\ul{L}$ are isomorphic in $\Don^\#(M_0,M_1)$, and for every
generalized Lagrangian $\ul{L}\in\Obj(\Don^\#(M_1,M_0))$ the objects
$\ul{L}\#\Delta_{M_0}$ and $\ul{L}$ are isomorphic in
$\Don^\#(M_1,M_0)$.
\end{proposition}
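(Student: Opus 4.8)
The plan is to reduce both isomorphisms to the strip-shrinking results Theorem~\ref{main2} and Theorem~\ref{intertwine}, following the pattern of the proof of Theorem~\ref{main22}, and exploiting the elementary fact that geometric composition with the diagonal is the identity operation: for any Lagrangian correspondence $L_{01}\subset M_0^-\times N_1$ the composition $\Delta_{M_0}\circ L_{01}=L_{01}$ is automatically transverse and embedded (cf.\ Remark~\ref{small}(b)), and symmetrically $L_{(r-1)r}\circ\Delta_{M_0}=L_{(r-1)r}$ for $L_{(r-1)r}\subset N_{r-1}^-\times M_0$. I will treat the first statement in detail; the second is entirely symmetric, with the strip shrunk at the opposite end of the sequence.

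First I would record why the hypothesis $w_2(M_0)=0$ is exactly what is needed. By Remark~\ref{rmk diag} the diagonal with its natural relative spin structure (induced from the projection to the second factor) is a correspondence from $(M_0,b_0)$ to $(M_0,b_0-w_2(M_0))$, and hence a genuine object of $\Don^\#(M_0,M_0)$ — with matching background classes on source and target — precisely when $w_2(M_0)=0$. Under this hypothesis, the relative spin structure that Theorem~\ref{main2} induces on the composed correspondence $\Delta_{M_0}\circ L_{01}=L_{01}$ coincides with the original relative spin structure on $L_{01}$, so that $\Delta_{M_0}\#\ul{L}$ and $\ul{L}$ are being compared as objects carrying the same brane data. (For general $M_0$ one obtains instead an isomorphism with a shift of the relative spin structure by $w_2(M_0)$, consistent with Corollary~\ref{phidelta}.)

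With the brane bookkeeping settled, I would run the argument of Theorem~\ref{main22}. Writing $\ul{L}=(L_{01},\ldots,L_{(r-1)r})\in\Obj(\Don^\#(M_0,M_1))$, so that $\Delta_{M_0}\#\ul{L}=(\Delta_{M_0},L_{01},\ldots,L_{(r-1)r})$, Theorem~\ref{main2} applied to the strip corresponding to the embedded composition $\Delta_{M_0}\circ L_{01}=L_{01}$ yields canonical isomorphisms
$$
\Hom(\ul{L},\Delta_{M_0}\#\ul{L})\;\cong\;\Hom(\ul{L},\ul{L}),
\qquad
\Hom(\Delta_{M_0}\#\ul{L},\ul{L})\;\cong\;\Hom(\ul{L},\ul{L}).
$$
Let $\phi\in\Hom(\ul{L},\Delta_{M_0}\#\ul{L})$ and $\psi\in\Hom(\Delta_{M_0}\#\ul{L},\ul{L})$ be the respective preimages of $1_{\ul{L}}$. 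To prove $\psi\circ\phi=1_{\ul{L}}$ and $\phi\circ\psi=1_{\Delta_{M_0}\#\ul{L}}$ I would apply Theorem~\ref{intertwine} to the quilted pair-of-pants surfaces computing these compositions, shrinking the strip over the intermediate copy of $M_0$: the theorem guarantees that the shrinking isomorphisms intertwine the pair-of-pants relative invariants, reducing each composite to the corresponding composite of identities exactly as in the degenerations of Figure~\ref{shrinkpants}. Hence $\phi$ and $\psi$ are mutually inverse, establishing $\Delta_{M_0}\#\ul{L}\simeq\ul{L}$ in $\Don^\#(M_0,M_1)$, and the symmetric argument gives $\ul{L}\#\Delta_{M_0}\simeq\ul{L}$ in $\Don^\#(M_1,M_0)$.

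The main obstacle is not the analysis — that is entirely packaged into Theorems~\ref{main2} and~\ref{intertwine} — but the orientation and relative-spin bookkeeping. One must verify that the identification of determinant lines underlying Theorem~\ref{main2} carries the chosen relative spin structure on $\Delta_{M_0}$ to the trivial shift on $L_{01}$ precisely when $w_2(M_0)=0$, and that no stray gluing signs intervene in checking $\phi\circ\psi$ and $\psi\circ\phi$. This is the same sign and orientation analysis appearing in Remark~\ref{zmod} and in the proof of Theorem~\ref{main22}, and it is where the hypothesis $w_2(M_0)=0$ is consumed.
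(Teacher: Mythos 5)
Your proposal is correct and follows essentially the same route as the paper: both use Theorem~\ref{main2} to identify $\Hom(\ul{L},\Delta_{M_0}\#\ul{L})$ and $\Hom(\Delta_{M_0}\#\ul{L},\ul{L})$ with $\Hom(\ul{L},\ul{L})$, take $\phi,\psi$ to be the preimages of $1_{\ul{L}}$, and invoke Theorem~\ref{intertwine} on the strip-shrinking degenerations of the quilted pair of pants (the paper's Figure~\ref{moreshrinkpants}, the direct analogue of the Figure~\ref{shrinkpants} degenerations you cite) to show $\phi$ and $\psi$ are mutually inverse. Your additional discussion of the relative spin bookkeeping and the role of $w_2(M_0)=0$ is consistent with Remark~\ref{rmk diag} and Corollary~\ref{phidelta}, and the paper's only extra content is the passing observation that one could alternatively glue the strips rather than shrink them.
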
 

\begin{proof}   
By Theorem \ref{main2}, both $\Hom(\Delta_{M_0}\# \ul{L},\ul{L})$ and
$\Hom(\ul{L},\Delta_{M_0}\# \ul{L})$ are isomorphic to
$\Hom(\ul{L},\ul{L})$; let $\phi$ resp.\ $\psi$ denote the inverse
image of the identity $1_{\ul{L}}$.  Then the identities $\phi \circ
\psi = 1_{\ul{L}}$ and $\phi \circ \psi = 1_{\Delta_{M_0} \# \ul{L} }$
follow from Theorem \ref{intertwine} applied to the degenerations
shown in Figure \ref{moreshrinkpants}.  (Alternatively, as mentioned
in Section \ref{sym2}, one could glue the strips instead of shrinking
them.)  This proves $\Delta_{M_0}\#\ul{L} \simeq \ul{L}$.  The
isomorphism $\ul{L}\#\Delta_{M_0} \simeq \ul{L}$ is proven in the same
way.
\end{proof}

\begin{figure}
\begin{picture}(0,0)%
\includegraphics{diagpants.pstex}%
\end{picture}%
\setlength{\unitlength}{2362sp}%
\begingroup\makeatletter\ifx\SetFigFont\undefined%
\gdef\SetFigFont#1#2#3#4#5{%
  \reset@font\fontsize{#1}{#2pt}%
  \fontfamily{#3}\fontseries{#4}\fontshape{#5}%
  \selectfont}%
\fi\endgroup%
\begin{picture}(9598,5819)(-1177,-5423)
\put(3320,-1066){\makebox(0,0)[lb]{$\underset{\delta\to 0}{\sim}$}}
\put(1700,-4171){\makebox(0,0)[lb]{$\underset{\delta\to 0}{\sim}$}}
\put(5121,-4126){\makebox(0,0)[lb]{$\underset{\delta\to 0}{\sim}$}}
\put(5086,-16){\makebox(0,0)[lb]{$M_0$}}
\put(5086,-2131){\makebox(0,0)[lb]{$M_1$}}
\put(5200,-1041){\makebox(0,0)[lb]{$\ul{L}$}}
\put(1621,-16){\makebox(0,0)[lb]{$M_0$}}
\put(1800,-1006){\makebox(0,0)[lb]{$\delta$}}
\put(596,-1041){\makebox(0,0)[lb]{$\ul{L}$}}
\put(2800,-1041){\makebox(0,0)[lb]{$\ul{L}$}}
\put(1530,-1540){\makebox(0,0)[lb]{$\ul{L}$}}
\put(1531,-590){\makebox(0,0)[lb]{$\Delta$}}
\put(1621,-2131){\makebox(0,0)[lb]{$M_1$}}
\put(  1,-3121){\makebox(0,0)[lb]{$M_0$}}
\put(1041,-4641){\makebox(0,0)[lb]{$\ul{L}$}}
\put(1041,-3721){\makebox(0,0)[lb]{$\Delta$}}
\put(100,-4146){\makebox(0,0)[lb]{$\ul{L}$}}
\put(-900,-4641){\makebox(0,0)[lb]{$\ul{L}$}}
\put(-900,-3721){\makebox(0,0)[lb]{$\Delta$}}
\put(  1,-5236){\makebox(0,0)[lb]{$M_1$}}
\put(3421,-3121){\makebox(0,0)[lb]{$M_0$}}
\put(3421,-5236){\makebox(0,0)[lb]{$M_1$}}
\put(3531,-4146){\makebox(0,0)[lb]{$\ul{L}$}}
\put(6886,-3121){\makebox(0,0)[lb]{$M_0$}}
\put(7300,-4111){\makebox(0,0)[lb]{$\delta$}}
\put(6886,-5236){\makebox(0,0)[lb]{$M_1$}}
\put(1216,-4111){\makebox(0,0)[lb]{$\delta$}}
\put(-989,-4111){\makebox(0,0)[lb]{$\delta$}}
\put(6346,-3721){\makebox(0,0)[lb]{$\Delta$}}
\put(6346,-4641){\makebox(0,0)[lb]{$\ul{L}$}}
\put(1071,-1006){\makebox(0,0)[lb]{$\phi$}}
\put(-450,-4111){\makebox(0,0)[lb]{$\psi$}}
\put(2341,-1006){\makebox(0,0)[lb]{$\psi$}}
\put(621,-4111){\makebox(0,0)[lb]{$\phi$}}
\end{picture}%

\caption{Isomorphism of $\Delta_{M_0}\#\ul{L}$ and $\ul{L}$}
\label{moreshrinkpants}
\end{figure}

\begin{corollary} \label{main2cor} 
Under the assumptions of Theorem~\ref{main22} the functors
$\Psi_{M_0} \circ \Phi(L_{01} \circ L_{12})$,
$\Phi(L_{01} \circ L_{12})\circ \Psi_{M_2}$, and
$\Phi(L_{01}) \circ \Phi(L_{12})$ are all isomorphic 
in the category of functors from $\Don^\#(M_0)$ to $\Don^\#(M_2)$.
\end{corollary}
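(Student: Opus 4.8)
The plan is to deduce this corollary purely formally from Theorem~\ref{main22}, by applying the functor of Proposition~\ref{functor} and then translating concatenations of correspondences into compositions of functors via \eqref{concateq} and Corollary~\ref{phidelta}. The essential point is that all of the geometric, strip-shrinking content has already been absorbed into Theorem~\ref{main22}; what remains is a diagram chase at the level of functor categories.

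First I would invoke Theorem~\ref{main22}, whose hypotheses are exactly the assumptions of the present corollary, to obtain isomorphisms
$$\Delta_{M_0}\#L_{02}\;\cong\;L_{01}\#L_{12}\;\cong\;L_{02}\#\Delta_{M_2}$$
of objects in $\Don^\#(M_0,M_2)$, where $L_{02}=L_{01}\circ L_{12}$. Next I would apply the functor $\Don^\#(M_0,M_2)\to\Fun(\Don^\#(M_0),\Don^\#(M_2))$ furnished by Proposition~\ref{functor}. Since any functor carries isomorphisms to isomorphisms, and an isomorphism in the target functor category is precisely a natural isomorphism of functors, this yields natural isomorphisms
$$\Phi(\Delta_{M_0}\#L_{02})\;\cong\;\Phi(L_{01}\#L_{12})\;\cong\;\Phi(L_{02}\#\Delta_{M_2}).$$

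Then I would rewrite each side using the tautological factorization \eqref{concateq}, $\Phi(\ul{L}_{ab}\#\ul{L}_{bc})=\Phi(\ul{L}_{ab})\circ\Phi(\ul{L}_{bc})$, obtaining
$$\Phi(\Delta_{M_0})\circ\Phi(L_{02})\;\cong\;\Phi(L_{01})\circ\Phi(L_{12})\;\cong\;\Phi(L_{02})\circ\Phi(\Delta_{M_2}).$$
Finally, Corollary~\ref{phidelta} provides canonical natural isomorphisms $\Phi(\Delta_{M_0})\cong\Psi_{M_0}$ and $\Phi(\Delta_{M_2})\cong\Psi_{M_2}$; whiskering these with $\Phi(L_{02})$ on the appropriate side replaces $\Phi(\Delta_{M_0})\circ\Phi(L_{02})$ by $\Psi_{M_0}\circ\Phi(L_{01}\circ L_{12})$ and $\Phi(L_{02})\circ\Phi(\Delta_{M_2})$ by $\Phi(L_{01}\circ L_{12})\circ\Psi_{M_2}$. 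Concatenating the resulting chain of isomorphisms gives the three stated identifications.

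The only genuine bookkeeping, and the step I would watch most carefully, is the matching of background classes and relative spin structures, which is exactly the reason the shift functors $\Psi_{M_0}$ and $\Psi_{M_2}$ appear in place of honest identity functors. This is, however, entirely encoded in Remark~\ref{rmk diag} and Corollary~\ref{phidelta}, so no new analysis is required; the substantive work is all contained in the strip-shrinking Theorem~\ref{intertwine} underlying Theorem~\ref{main22}.
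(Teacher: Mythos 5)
Your proposal is correct and follows essentially the same route as the paper's proof: Theorem~\ref{main22} gives the isomorphisms of objects in $\Don^\#(M_0,M_2)$, the tautological identity \eqref{concateq} converts concatenations into compositions of functors, and Corollary~\ref{phidelta} replaces $\Phi(\Delta_{M_k})$ by the shift functors $\Psi_{M_k}$. Your explicit invocation of Proposition~\ref{functor} to transport object isomorphisms to natural isomorphisms of the associated functors is a step the paper leaves implicit, but it is exactly the intended mechanism.
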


\begin{proof}
From Theorem \ref{main22} and \eqref{concateq} we obtain isomorphisms
between
$\Phi(\Delta_{M_0}\#L_{02})=\Phi(\Delta_{M_0})\circ\Phi(L_{02})$,
$\Phi(L_{02}\#\Delta_{M_2})=\Phi(L_{02})\circ\Phi(\Delta_{M_2})$, and
$\Phi(L_{01}\#L_{12})=\Phi(L_{01})\circ\Phi(L_{12})$.  By
Proposition~\ref{phidelta} the functors $\Phi(\Delta_{M_k})$ are
isomorphic to the shift functors $\Psi_{M_k}$.  Since isomorphisms
commute with composition of functors, this proves the corollary.
\end{proof}

\section{
$2$-category of monotone symplectic manifolds}
\label{2cat}

We can rephrase and summarize the constructions of the previous sections, 
using the language of $2$-categories.

\begin{definition}  A {\em $2$-category} $\cC$ consists of the following data:
\ben
\item  A class of objects $\Obj(\cC)$.
\item  For each pair of objects $X,Y \in \Obj(\cC)$, a small
category $\Hom(X,Y)$.  
\item  For each triple of objects $X,Y,Z \in \Obj(\cC)$, a composition 
functor 
$$ \circ: \ \Hom(X,Y) \times \Hom(Y,Z) \to \Hom(X,Z) .$$
\item For every $X \in \Obj(\cC)$ an identity functor $1_X \in
  \Hom(X,X)$.
\een
These data should satisfy the following axioms:
\smallskip

(Identity):\; For all $X,Y \in \Obj(\cC)$ and $f \in \Hom(X,Y)$
$$ 1_X \circ f = f, \ \ \ \ f \circ 1_Y = f .$$ 

(Associativity):\;
For all composable morphisms $f,g,h$ 
$$ f \circ (g \circ h) = (f \circ g) \circ h . $$
\end{definition}

Objects resp.\ morphisms in $\Hom(X,Y)$ are called $1$-morphisms
resp.\ $2$-morphisms.  We say that $\cC$ has {\em weak identities} if
equality in the identity axiom is replaced by $2$-isomorphism.

The basic example of a $2$-category is $\on{Cat}$, 
whose objects are small categories, $1$-morphisms are functors, 
and $2$-morphisms are natural transformations.  

\begin{definition}
A {\em $2$-functor} $\F:\cC_1\to\cC_2$ between $2$-categories
$\cC_1$ and $\cC_2$ consists of 
\ben 
\item a map $\F: \Obj(\cC_1) \to \Obj(\cC_2)$,
\item for each pair $X,Y \in \Obj(\cC_1)$, a functor 
$$\F(X,Y): \Hom(X,Y) \to \Hom(\F(X),\F(Y)) ,$$
\een
respecting composition and identities.
\end{definition}

In the following we restrict ourselves to symplectic manifolds that are spin,
i.e.\ ${w_2(M)=0}$. Their advantage is that the shift functor 
$\Psi_M:\Don^\#(M,b)\to\Don^\#(M,b)$ of Definition \ref{dfn shift} is trivial 
and the diagonal $\Delta_M\subset M^-\times M$ is an object of the
category of correspondences $\Don^\#(M,M)$ from $(M,b)$ to itself.
We moreover drop the Maslov cover from the data, thus working with 
ungraded Floer cohomology groups.

\begin{definition} \label{WF}
Fix a constant $\tau\geq 0$.
Let the {\em Weinstein-Floer $2$-category} $\Floer^\#_{\tau}$ 
be the category given as follows:
\begin{enumerate}
\item
Objects are symplectic manifolds $(M,\omega)$ that satisfy (M1-2) with
monotonicity constant $\tau$ and $w_2(M)=0$, and that are equipped
with a background class $b\in H^2(M,\Z_2)$. 
\item
The morphism categories of $\Floer^\#$ are the Donaldson categories of
Lagrangian correspondences, $\Hom(M_0,M_1):=\Don^\#(M_0,M_1)$; without grading.
\item Composition is defined by the functor \eqref{concat},
$$
\#: \Don^\#(M_0,M_1) \times \Don^\#(M_1,M_2) \to \Don^\#(M_0,M_2) .
$$
\item
The diagonal defines a weak identity $\Delta_M \in \Don^\#(M,M)$.
\end{enumerate}
\end{definition}

\begin{remark}
One could define $\Floer_\tau^\#$ by restricting to nonempty symplectic manifolds.
However, for future applications, we wish to include the empty set $\emptyset$ as object.
The only elementary Lagrangian correspondence from $\emptyset$ to $M$ is $L=\emptyset$, 
but in the sequence of a generalized Lagrangian correspondences, we must now allow any number of $\emptyset$ as symplectic manifolds as well as Lagrangian correspondences.
However, the Floer cohomology of any generalized Lagrangian correspondence containing 
$\emptyset$ is the trivial group $HF( \ldots\overset{\emptyset}{\longrightarrow}\ldots)=\{0\}$.
\end{remark}

The associativity axiom on $\Floer_\tau^\#$ is immediate on the level of objects: For any
triple $\ul{L}_{01}\in\Obj(\Don^\#(M_0,M_1))$,
$\ul{L}_{12}\in\Obj(\Don^\#(M_1,M_2))$,
$\ul{L}_{23}\in\Obj(\Don^\#(M_2,M_3))$ we have
$(\ul{L}_{01}\#\ul{L}_{12})\#\ul{L}_{23}=\ul{L}_{01}\#(\ul{L}_{12}\#\ul{L}_{23})$.
On the level of morphisms we apply the quilted gluing theorem \cite[Theorem 3.13]{quilts} to the gluings
indicated by dashed lines in Figure~\ref{ass2} to prove that
$(f\#g)\#h=f\#(g\#h)$ for all $f\in\Hom(\ul{L}_{01},\ul{L}_{01}')$,
$g\in\Hom(\ul{L}_{12},\ul{L}_{12}')$,
$h\in\Hom(\ul{L}_{23},\ul{L}_{23}')$.  The weak identity axiom follows
from Proposition \ref{diagonal}.  Hence $\Floer^\#$ is a $2$-category
with weak identities.

\begin{figure}
\begin{picture}(0,0)%
\includegraphics{ass2.pstex}%
\end{picture}%
\setlength{\unitlength}{4144sp}%
\begingroup\makeatletter\ifx\SetFigFont\undefined%
\gdef\SetFigFont#1#2#3#4#5{%
  \reset@font\fontsize{#1}{#2pt}%
  \fontfamily{#3}\fontseries{#4}\fontshape{#5}%
  \selectfont}%
\fi\endgroup%
\begin{picture}(4479,2534)(-56,-2003)
\put(3421,-1471){\makebox(0,0)[lb]{$h$}}
\put(3931,-1501){\makebox(0,0)[lb]{$\ul{L}_{23}$}}
\put(4041,-826){\makebox(0,0)[lb]{$\ul{L}_{12}$}}
\put(3916,-151){\makebox(0,0)[lb]{$\ul{L}_{01}$}}
\put(2771,-1501){\makebox(0,0)[lb]{$\ul{L}_{23}'$}}
\put(3421,-121){\makebox(0,0)[lb]{$f$}}
\put(2801,-151){\makebox(0,0)[lb]{$\ul{L}_{01}'$}}
\put(3421,-800){\makebox(0,0)[lb]{$g$}}
\put(2656,-826){\makebox(0,0)[lb]{$\ul{L}_{12}'$}}
\put(856,-1471){\makebox(0,0)[lb]{$h$}}
\put(1216,-1501){\makebox(0,0)[lb]{$\ul{L}_{23}$}}
\put(1396,-826){\makebox(0,0)[lb]{$\ul{L}_{12}$}}
\put(1421,-151){\makebox(0,0)[lb]{$\ul{L}_{01}$}}
\put(1171,209){\makebox(0,0)[lb]{$M_0$}}
\put(666,-1861){\makebox(0,0)[lb]{$M_3$}}
\put(300,-1501){\makebox(0,0)[lb]{$\ul{L}_{23}'$}}
\put(1171,-1186){\makebox(0,0)[lb]{$M_2$}}
\put(991,-466){\makebox(0,0)[lb]{$M_1$}}
\put(856,-121){\makebox(0,0)[lb]{$f$}}
\put(200,-151){\makebox(0,0)[lb]{$\ul{L}_{01}'$}}
\put(836,-796){\makebox(0,0)[lb]{$g$}}
\put(150,-826){\makebox(0,0)[lb]{$\ul{L}_{12}'$}}
\put(2125,-800){\makebox(0,0)[lb]{$=$}}
\put(3451,254){\makebox(0,0)[lb]{$M_0$}}
\put(3736,-466){\makebox(0,0)[lb]{$M_1$}}
\put(3511,-1141){\makebox(0,0)[lb]{$M_2$}}
\put(3050,-1861){\makebox(0,0)[lb]{$M_3$}}
\end{picture}%
\caption{Associativity of the concatenation functor}
\label{ass2}
\end{figure}

\begin{remark} \label{Floer indep}
$\Floer^\#_\tau$ is independent up to $2$-isomorphism of
$2$-categories of the choices of perturbation data and strip widths,
as in Remarks \ref{Don indep}, \ref{Donsharp indep},
and the proofs of independence of
quilted Floer cohomology and relative quilt invariants in \cite{quiltfloer,quilts}.
\end{remark}

Theorem \ref{main22} implies that the definition of composition in the
Weinstein-Floer 2-category $\Floer^\#_\tau$ agrees with the geometric
definition, in the case that geometric composition is smooth,
embedded, and monotone.

\begin{theorem} \label{standard}
The map
$ M_0 \mapsto {\Don^\#}(M_0)$
and the functors
$${\Don^\#}(M_0,M_1) \to \Fun({\Don^\#}(M_0),{\Don^\#}(M_1)) $$ 
as in Proposition \ref{functor} define a 
{\em categorification $2$-functor} $\Floer_\tau^\# \to \Cat$
for every $\tau\ge 0$.
\end{theorem}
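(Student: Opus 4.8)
The plan is to verify the two defining axioms of a $2$-functor for the data that are already in place: on objects we send $M\mapsto\Don^\#(M)$, and for each ordered pair $(M_0,M_1)$ we take the functor $\Don^\#(M_0,M_1)\to\Fun(\Don^\#(M_0),\Don^\#(M_1))$ supplied by Proposition~\ref{functor}, which sends a generalized correspondence $\ul{L}_{01}$ to the functor $\Phi(\ul{L}_{01})$ and a Floer class $T$ to the natural transformation $\Phi_T$. Since Proposition~\ref{functor} already proves that each of these is a genuine functor, all that remains is to check that this assignment is compatible with the composition functor $\#$ of $\Floer^\#_\tau$ (matching it with functor composition and horizontal composition of natural transformations in $\Cat$), and that it sends the weak identity $\Delta_M$ to a functor isomorphic to $1_{\Don^\#(M)}$.

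First I would settle the composition axiom on $1$-morphisms, where it suffices to check that the square
\begin{equation*}
\begin{array}{ccc}
\Don^\#(M_0,M_1)\times\Don^\#(M_1,M_2) & \xrightarrow{\ \#\ } & \Don^\#(M_0,M_2)\\[1mm]
\downarrow & & \downarrow\\[1mm]
\Fun(\Don^\#(M_0),\Don^\#(M_1))\times\Fun(\Don^\#(M_1),\Don^\#(M_2)) & \xrightarrow{\ \circ\ } & \Fun(\Don^\#(M_0),\Don^\#(M_2))
\end{array}
\end{equation*}
commutes as a diagram of functors. On objects this is precisely the tautological identity \eqref{concateq}, $\Phi(\ul{L}_{01}\#\ul{L}_{12})=\Phi(\ul{L}_{01})\circ\Phi(\ul{L}_{12})$, which holds because $\Phi$ of a generalized correspondence was defined in \eqref{funklong} as the iterated composition of the elementary functors.

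The hard part will be the commutativity of this square on morphisms, namely that for $f\in\Hom(\ul{L}_{01},\ul{L}_{01}')$ and $g\in\Hom(\ul{L}_{12},\ul{L}_{12}')$ the natural transformation $\Phi_{f\#g}$ attached to the pair-of-pants composite $f\#g=\Phi_{\ul{P}}(f\otimes g)$ coincides with the horizontal (Godement) composite $\Phi_f*\Phi_g$. I would prove this by evaluating both natural transformations at an arbitrary $\ul{L}\in\Obj(\Don^\#(M_0))$ and exhibiting each value as the relative invariant of one large quilted surface. Inserting $\Phi_{\ul{P}}(f\otimes g)$ into the cylindrical end of the defining surface of Figure~\ref{natural} is, by the quilted gluing theorem \cite[Theorem~3.13]{quilts}, the relative invariant of the surface obtained by gluing on the quilted pair of pants of Figure~\ref{quiltpants3}; that enlarged quilt then degenerates along a different family of seams into the composite surface computing $(\Phi_f*\Phi_g)(\ul{L})$, the two ways of separating the two insertion ends realizing the interchange law for horizontal composition. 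A second application of the gluing theorem to this degeneration yields the equality, with the orientation and ordering conventions of Remark~\ref{zmod} and of the composition functor absorbing the Koszul sign $(-1)^{|f||g|}$ into the induced isomorphism of determinant lines. This is the one step that invokes genuine analysis rather than formal bookkeeping, and arranging the degenerating family of quilts so that both sides appear as relative invariants of homotopic surfaces is its crux.

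Finally I would dispatch the identity axiom. In $\Floer^\#_\tau$ the identity $1_M$ is the weak identity $\Delta_M$, so I must check that $\Phi(\Delta_M)$ is isomorphic to $1_{\Don^\#(M)}$. By Corollary~\ref{phidelta} the functor $\Phi(\Delta_M)$ is canonically isomorphic to the shift functor $\Psi_M$, and since Definition~\ref{WF} restricts $\Floer^\#_\tau$ to manifolds with $w_2(M)=0$, the background-class shift $b\mapsto b-w_2(M)$ is trivial and $\Psi_M$ is literally the identity functor. Hence $\Phi(\Delta_M)\cong 1_{\Don^\#(M)}$, which is the required compatibility with the weak identity. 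Together with the composition verification above, this shows that the stated data constitute a categorification $2$-functor $\Floer^\#_\tau\to\Cat$.
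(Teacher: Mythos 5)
Your proposal is correct and follows essentially the same route as the paper: the paper's proof consists precisely of citing the tautological identity \eqref{concateq} for compatibility with composition and Corollary~\ref{phidelta} (together with $w_2(M)=0$ killing the shift functor) for the weak identity. The one place you go beyond the paper is the $2$-morphism-level compatibility $\Phi_{f\# g}=\Phi_f*\Phi_g$, which the paper's one-line proof leaves implicit under ``compatibility with the composition''; your gluing-and-degeneration argument for it is exactly in the style of the verifications in Figures~\ref{Natfig} and \ref{Tcomp} and is a legitimate (indeed welcome) filling-in of that gap rather than a different approach.
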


\begin{proof}
Compatibility with the composition follows from the identity \eqref{concateq}.
The weak identities $\Delta_M\in\Hom(M,M)$ are mapped to weak identities
$\Phi(\Delta)\simeq 1_{\Don^\#(M)}$ by Corollary \ref{phidelta}.
Here the shift functor $\Psi_M$ is the identity since $w_2(M)=0$.
\end{proof}

\begin{remark}
\ben
\item
For any genuinely monotone symplectic manifold (i.e.\ with $\tau>0$)
we can achieve $\tau=1$ by rescaling. It thus suffices to consider the
{\em exact Weinstein-Floer 2-category} $\Floer^\#_0$ and the {\em
monotone Weinstein-Floer 2-category} $\Floer^\#_1$.  Note however that
we cannot incorporate Lagrangian correspondences between monotone
symplectic manifolds with different monotonicity constants. This is
due to bubbling effects which in our present setup are true obstructions to the equivalence of algebraic composition $L_{01}\#L_{12}$ and embedded geometric composition $L_{01}\circ L_{12}$. 
We expect that the $A_\infty$-setup, incorporating all bubbling
effects, has better behavior.
\item
One can define an analogous {\em graded Weinstein-Floer 2-category}
$\Floer^\#_{N,\tau}$ for any $\tau\ge0$ and integer $N$, whose objects
are monotone symplectic manifolds with the additional structure of a
Maslov cover $\Lag^N(M)\to M$.  Its $1$-morphisms are graded
generalized Lagrangian correspondences, and its $2$-morphism spaces
are the graded Floer cohomology groups.
\een
\end{remark}

\begin{remark}
\ben
\item One can define a strong identity $1_M \in \Hom(M,M)$ by
allowing the empty sequence $1_M := \emptyset$ as a generalized
Lagrangian correspondence. The various constructions in this Section
extend to the case of empty sequences by allowing cylindrical ends.
\item
In the case $w_2(M) \ne 0$, the diagonal is not an automorphism but a
morphism $\Delta_M\in\Hom((M,b),(M,b-w_2(M)))$, see Remark \ref{rmk
diag}.  Hence
$$\ul{L}\#\Delta_M\in\Hom((M_1,b_1),(M,b-w_2(M))), \ \ \
\ul{L}\in\Hom((M_1,b_1),(M,b))$$
lie in different morphism spaces that are not related by a simple
shift in the background class.  However, the categorification functor in
Theorem \ref{standard} generalizes directly to this setup as follows. 
The functor maps the special $\Floer^\#_\tau$ $1$-morphisms
$\Delta_M\in\Don^\#((M,b),(M,b-w_2(M))$ to $\Cat$ $1$-morphisms that
are isomorphic to the shift functors
$\Psi_M\in\Fun(\Don^\#(M,b),\Don^\#(M,b-w_2(M)))$.  
\item One can make the diagonal a strong identity by modding out by
the equivalence relation discussed Section~\ref{symp cat}. 
Let $\Brane_\tau^\#$ denote the $2$-category whose objects and
$1$-morphisms are those of $\Floer_\tau^\#$, modulo the equivalence
relation $L_{01} \# L_{12} \sim L_{01} \circ L_{12}$ for embedded
compositions as in Section \ref{symp cat},
 and whose $2$-morphisms
are defined as follows.  Given a pair $[\ul{L}_{01}],[\ul{L}_{01}']$
of $1$-morphisms from $M_0$ to $M_1$, define the space of
$2$-morphisms $\Hom([\ul{L}_{01}],[\ul{L}_{01}'])$ by
$ \Hom([\ul{L}_{01}],[\ul{L}_{01}']) = HF(\ul{L}_{01}, \ul{L}_{01}') $
for some choice of representatives $\ul{L}_{01}, \ul{L}_{01}'$.
Define composition by concatenation $\#$, as in \eqref{concat}.  The
equivalence classes of the diagonal $[\Delta_M]$ define true
identities in case $w_2(M)=0$.  Our main result, Theorem \ref{main2},
implies that $\Brane_\tau^\#$ is independent of the choice of
representatives up to $2$-isomorphism of $2$-categories.  Theorem
\ref{main22} implies that the categorification $2$-functor of Theorem
\ref{standard} induces a $2$-functor $\Brane_\tau^\# \to \on{Cat}$ to
the $2$-category of categories $\on{Cat}$.
\een
\end{remark}

\def\cprime{$'$} \def\cprime{$'$} \def\cprime{$'$} \def\cprime{$'$}
  \def\cprime{$'$} \def\cprime{$'$}
  \def\polhk#1{\setbox0=\hbox{#1}{\ooalign{\hidewidth
  \lower1.5ex\hbox{`}\hidewidth\crcr\unhbox0}}} \def\cprime{$'$}
  \def\cprime{$'$}

\end{document}